\newcommand{\Hom}{\textrm{Hom}}
\newcommand{\pd}{\textrm{pd}}
\newcommand{\depth}{\textrm{depth}}
\newcommand{\link}{\mathrm{link}}
\newcommand{\tr}{\textrm{tr}}
\newcommand{\bfa}{\mathbf{a}}
\newcommand{\bfb}{\mathbf{b}}
\newcommand{\bfv}{\mathbf{v}}
\newcommand{\bfu}{\mathbf{u}}
\newcommand{\bfe}{\mathbf{e}}
\newcommand{\bfx}{\mathbf{x}}
\newcommand{\ZZ}{\mathbb{Z}}
\newcommand{\NN}{\mathbb{N}}
\newcommand{\QQ}{\mathbb{Q}}
\newcommand{\RR}{\mathbb{R}}
\newcommand{\kk}{\Bbbk}
\def\opn#1#2{\def#1{\operatorname{#2}}} 
\opn\chara{char} \opn\length{\ell} \opn\pd{pd} \opn\rk{rk}
\opn\projdim{proj\,dim} \opn\injdim{inj\,dim} \opn\rank{rank}
\opn\depth{depth} \opn\grade{grade} \opn\height{height}
\opn\embdim{emb\,dim} \opn\codim{codim}
\opn\Tr{Tr} \opn\bigrank{big\,rank}
\opn\superheight{superheight}\opn\lcm{lcm}
\opn\trdeg{tr\,deg}
\opn\reg{reg} \opn\lreg{lreg} \opn\ini{in} \opn\lpd{lpd}
\opn\size{size}\opn{\mult}{mult}
\opn\aff{aff} \opn\con{conv} \opn\relint{relint} \opn\cok{coker}
\opn\img{Im} \opn\cn{cn} \opn\inte{int} \opn\vol{vol}
\opn\link{link} \opn\star{star}
\theoremstyle{plain}
\newtheorem{Theorem}{Theorem}[section]
\newtheorem{Lemma}[Theorem]{Lemma}
\newtheorem{Corollary}[Theorem]{Corollary}
\newtheorem{Thm}[Theorem]{Theorem}
\newtheorem{Proposition}[Theorem]{Proposition}
\theoremstyle{definition}
\newtheorem{Definition}[Theorem]{Definition}
\newtheorem{Example}[Theorem]{Example}
\newtheorem{Examples}[Theorem]{Examples}
\newtheorem{Remark}[Theorem]{Remark}
\begin{document}

\title{Comparing generalized Gorenstein properties in semi-standard graded rings}
\author[S. Miyashita]{Sora Miyashita}
\address[S. Miyashita]{Department of Pure And Applied Mathematics, Graduate School Of Information Science And Technology, Osaka University, Suita, Osaka 565-0871, Japan}
\email{u804642k@ecs.osaka-u.ac.jp}
\keywords{Nearly Gorenstein, almost Gorenstein, level, semi-standard graded rings, affine semigroup rings, trace ideals}
\subjclass[2020]{Primary 13H10; Secondary 13M05}
\maketitle

\begin{abstract}
Semi-standard graded rings are a generalized notion of standard graded rings. In this paper, we compare generalized notions of the Gorenstein property in semi-standard graded rings. We discuss the commonalities between standard graded rings and semi-standard graded rings, as well as elucidate distinctive phenomena present in semi-standard graded rings that are absent in standard graded rings.
\end{abstract}

\section{Introduction}
With the development of non-Gorenstein Cohen-Macaulay analysis, various generalized properties of Gorenstein rings have been defined. Notable examples include nearly Gorenstein, almost Gorenstein, and level property.
In particular, comparisons of these properties have been done in \cite{herzog2019trace, higashitani2022levelness,miyashita2022levelness,moscariello2021nearly}.
Nearly Gorenstein and almost Gorenstein rings have been studied in various classes such as standard graded rings \cite{higashitani2016almost,miyashita2022levelness}, numerical semigroup rings \cite{kumashiro2023nearly,moscariello2021nearly}, and Ehrhart rings \cite{hall2023nearly,miyazaki2022gorenstein}.
In particular, Higashitani \cite{higashitani2016almost} developed the theory of almost Gorenstein standard graded domains in terms of their $h$-vector.
It is known that the almost Gorenstein property in standard graded domains can be determined by using their $h$-vector and Cohen-Macaulay type (see \cite[Corollary 2.7]{higashitani2016almost}).
Moreover, the author proved that in standard graded affine semigroup rings, the last term of its $h$-vector of non-Gorenstein nearly Gorenstein rings is always at least 2 (see \cite[Theorem 4.4]{miyashita2022levelness}). This is a useful tool when we compare nearly Gorenstein property with other properties in standard graded affine semigroup rings.
Furthermore, the level ring, a prominent generalization of Gorenstein rings, is a notion defined for graded rings.
Its behavior of standard graded rings has been extensively studied (see \cite{geramita2007hilbert,hibi1988level,yanagawa1995castelnuovo}). On the other hand, for the generalized class of standard graded rings, known as semi-standard graded rings, various studies have been conducted regarding level property, especially in relation to their $h$-vector (see \cite{haase2020levelness,higashitani2018non,stanley1991hilbert}).
Let us recall the definitions of standard graded and semi-standard graded.
\begin{Definition}
Let $R=\bigoplus_{i \in \NN} R_i$ be a graded Noetherian $\kk$-algebra over a field $R_0=\kk$.
If $R =\kk[R_1]$, that is, $R$ is generated by $R_1$ as a $\kk$-algebra, then we say $R$ is {\it standard graded}.  
If $R$ is finitely generated as a $\kk[R_1]$-module, then we say $R$ is {\it semi-standard graded}.
\end{Definition}
The {\it Ehrhart rings}  of lattice polytopes and the face rings of simplicial posets (see \cite{stanley1991f}) are typical classes of semi-standard graded rings.
From perspective of combinatorial commutative algebra, the concept of semi-standard graded rings naturally arises in this context.

If $R$ is a semi-standard graded ring of dimension $d$, its Hilbert series is of the form
$$\sum_{i \in \NN} (\dim_\kk R_i) t^i =\frac{h_0+h_1 t + \cdots +h_st^s}{(1-t)^d}$$ 
for some integers $h_0, h_1, \cdots, h_s$ with $\sum_{i=0}^s h_i \ne 0$ and $h_s \ne 0$.
We call the integer sequence $(h_0,h_1,\cdots,h_s)$ the {\it $h$-vector} of $R$ and denote it as $h(R)$.
Moreover, we call $s$ the {\it socle degree} of $R$ and denote it as $s(R)$.
We always have $h_0=1$.
If a semi-standard graded ring $R$ is Cohen--Macaulay,  its $h$-vector satisfies
$h_i \geqq 0$ for every $i$. Further, if $R$ is standard graded, we have $h_i > 0$ for every $i$. 
For further information on the $h$-vectors of Cohen--Macaulay semi-standard (resp. standard) graded rings, see \cite{stanley1991hilbert} (resp. \cite{stanley2007combinatorics}).

Goto, Takahashi, and Taniguchi \cite{goto2015almost} compare almost Gorenstein and level properties in standard graded rings, while the author \cite{miyashita2022levelness} compares nearly Gorenstein and level properties in standard graded rings. These studies have gradually revealed the compatibility of these properties in standard graded rings.
However, there are still many unknown things in the context of semi-standard graded rings. Therefore, the main theme of this paper is to compare nearly Gorenstein, almost Gorenstein, and level properties in semi-standard graded rings.

In this paper, we apply the techniques developed for standard graded rings, as seen in \cite{higashitani2016almost,miyashita2022levelness}, to the case of semi-standard graded rings. We establish the theory for semi-standard graded rings and extend several well-known results about standard graded rings to the case of semi-standard graded rings.
For instance, in Corollary \ref{NGhvector},
we extend the statement regarding the last term of the $h$-vector of standard graded rings to the semi-standard case.
We extend this further and demonstrate the following statement regarding trace ideals of semi-standard graded rings. This is applicable even in the non-Cohen-Macaulay case.

\newtheorem*{MainTheorem}{\rm\bf Theorem~\ref{TraceIdealAffine}}
\begin{MainTheorem}
{\it Let $R=\kk[S]$ be a
semi-standard graded affine semigroup ring.
Let $I$ be a non-principle ideal of $R$
and let $b=\min \{i: I_i\neq 0\}$.
If $\depth R \geqq 2$ and
$$(\bfx^\bfe : \bfe \in E_S)R \subseteq \tr(I),$$
then we have $\dim_\kk I_b \geqq 2$.
}
\end{MainTheorem}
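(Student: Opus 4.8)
The plan is to argue by contradiction: assuming $\dim_\kk I_b=1$, I will force $I$ to be principal. Throughout I use that $R=\kk[S]$ is a domain, so every nonzero homogeneous element is a nonzerodivisor and $I$ has positive grade; hence $I^{-1}:=\Hom_R(I,R)$ can be identified with $\{q\in Q(R):qI\subseteq R\}$ inside the fraction field $Q(R)$, and $\tr(I)=I\cdot I^{-1}$. Write $\mathfrak{m}=\bigoplus_{i\geq 1}R_i$ for the graded maximal ideal. The hypothesis gives in particular $R_1\subseteq\tr(I)_1$, since the degree-one part of $(\bfx^\bfe:\bfe\in E_S)R$ is all of $R_1$ (every degree-one element of $S$ is indecomposable, hence appears among the $\bfe\in E_S$).

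First I would record a degree-concentration statement for $I^{-1}$. If $q\in I^{-1}$ is homogeneous of degree $c$ and $0\neq f\in I_b$, then $qf\in R_{b+c}$ is nonzero, forcing $c\geq -b$; moreover if $c=-b$ then $qf\in R_0=\kk$ gives $q=\lambda f^{-1}$, whence $I\subseteq fR$ and $I=fR$ is principal, contrary to hypothesis. Thus $I^{-1}$ is concentrated in degrees $\geq 1-b$. Since $I_i=0$ for $i<b$, we get $\tr(I)_1=[II^{-1}]_1=\sum_c I_{1-c}\,I^{-1}_c$, and a summand is nonzero only when $1-c\geq b$ and $c\geq 1-b$, i.e. $c=1-b$; hence $\tr(I)_1=I_b\cdot I^{-1}_{1-b}$.

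Now assume $\dim_\kk I_b=1$ and write $I_b=\kk f$. The previous step gives $\tr(I)_1=f\cdot I^{-1}_{1-b}$, so the inclusion $R_1\subseteq\tr(I)_1$ says that every $r\in R_1$ is $r=fq$ with $q\in I^{-1}$; equivalently $r/f\in I^{-1}$, i.e. $rI\subseteq fR$. Hence $R_1 I\subseteq fR$, which means the ideal $W:=(R_1)R$ annihilates $N:=I/fR$; that is, $W\subseteq(fR:_R I)=\ann_R N$, and therefore $\Supp N=V(\ann_R N)\subseteq V(W)$. Here is where semi-standardness enters, replacing the standard-graded identity $\mathfrak{m}=(R_1)$ that is unavailable to us: since $R$ is module-finite over $A:=\kk[R_1]$, the quotient $R/W=R\otimes_A(A/A_+)$ is a finite-dimensional $\kk$-algebra, hence a connected graded Artinian ring, so its irrelevant ideal is nilpotent and $\sqrt W=\mathfrak{m}$, giving $V(W)=\{\mathfrak{m}\}$. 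Thus $\Supp N\subseteq\{\mathfrak{m}\}$, so either $N=0$ or $\mathfrak{m}\in\Ass N$.

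Finally I invoke $\depth R\geq 2$. From $0\to fR\to I\to N\to 0$, together with $fR\cong R$, one has $H^0_{\mathfrak{m}}(I)=0$ (as $I$ is torsion-free) and $H^1_{\mathfrak{m}}(fR)\cong H^1_{\mathfrak{m}}(R)=0$, so the long exact sequence yields $H^0_{\mathfrak{m}}(N)=0$, i.e. $\mathfrak{m}\notin\Ass N$. Combined with $\Supp N\subseteq\{\mathfrak{m}\}$ this forces $N=0$, so $I=fR$ is principal, a contradiction. I expect the main obstacle to be precisely this coordination at the end: in the semi-standard setting the degree-one part of the trace only controls $W=(R_1)R$, which is a \emph{proper} subideal of $\mathfrak{m}$, so the argument cannot conclude directly as in the standard-graded case but must route through the radical computation $\sqrt W=\mathfrak{m}$ (the one place module-finiteness over $\kk[R_1]$ is essential) together with the depth hypothesis to rule out $\mathfrak{m}$ as an associated prime of $I/fR$.
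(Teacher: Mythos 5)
There is a genuine gap at the very first step, and it changes what you have actually proved. You claim that the degree-one part of $(\bfx^\bfe : \bfe \in E_S)R$ is all of $R_1$ because ``every degree-one element of $S$ is indecomposable, hence appears among the $\bfe \in E_S$.'' This conflates two different things: being indecomposable (i.e., belonging to the minimal generating set $G_S$) and spanning a one-dimensional \emph{face} of $S$ (i.e., belonging to $E_S$). Every degree-one element of $S$ is indeed in $G_S$, but it need not lie on an extremal ray: for $S=\langle (0,1),(1,1),(2,1)\rangle$ graded by the second coordinate, $(1,1)$ is indecomposable, yet $\NN(1,1)$ is not a face since $(0,1)+(2,1)=2\cdot(1,1)$; here $E_S=\{(0,1),(2,1)\}$, so the degree-one part of $(\bfx^\bfe:\bfe\in E_S)R$ is $2$-dimensional while $\dim_\kk R_1=3$. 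The correct statement is the opposite inclusion, which is exactly Proposition~\ref{extremal} of the paper: semi-standardness forces every $\bfe\in E_S$ to have degree one, i.e., $(\bfx^\bfe:\bfe\in E_S)R\subseteq (R_1)R$, and in general this inclusion is strict. Since from that point on your argument uses only $R_1\subseteq\tr(I)_1$, what you have proved is the theorem under the \emph{stronger} hypothesis $(R_1)R\subseteq\tr(I)$ (so, in particular, for nearly Gorenstein rings), not under the stated extremal-ray hypothesis --- and the weak hypothesis is precisely the point of this theorem, being what Corollary~\ref{NGhvector} invokes in its first assertion.

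The rest of your argument is sound and can be repaired, yielding a route genuinely different from the paper's. Your degree bookkeeping $\tr(I)_1=I_b\cdot I^{-1}_{1-b}$ is correct, and by Proposition~\ref{extremal} each $\bfx^\bfe$ with $\bfe\in E_S$ does have degree one, so the stated hypothesis already gives $\bfx^\bfe I\subseteq fR$ for every $\bfe\in E_S$; hence $W':=(\bfx^\bfe:\bfe\in E_S)R$ annihilates $N=I/fR$. What must change is the justification that $\sqrt{W'}=\mathfrak{m}$: your Artinian argument via module-finiteness over $\kk[R_1]$ applies to $(R_1)R$ but not to the smaller ideal $W'$. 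Instead one argues face-theoretically: the minimal primes of the monomial ideal $W'$ are face primes $P_F=(\bfx^\bfa:\bfa\in S\setminus F)$, and $W'\subseteq P_F$ forces $F\cap E_S=\emptyset$; but any nonzero face $F$ of $S$ contains an extremal ray $\rho$ of $\RR_{\geqq 0}S$, and in the semi-standard case $S\cap\rho=\NN\bfa$ for a degree-one generator $\bfa\in E_S$ with $\bfa\in S\cap\RR_{\geqq 0}F=F$; hence $F=\{0\}$ and $\sqrt{W'}=\mathfrak{m}$. With that fix, your finish (from $\depth R\geqq 2$ and torsion-freeness of $I$, the long exact sequence of local cohomology gives $H^0_{\mathfrak{m}}(N)=0$, which against $\Supp N\subseteq\{\mathfrak{m}\}$ forces $N=0$) is a clean homological alternative to the paper's combinatorial proof, which instead passes through Lemma~\ref{extremalpoly} and Katth\"an's hole decomposition (Theorems~\ref{holedecomp} and~\ref{S2}) to locate $\bfv'-\bfv$ as a hole of $S$ and derive a contradiction. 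Note that semi-standardness then enters exactly at the two places just indicated, not through $\kk[R_1]$-finiteness as you predicted.
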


\newtheorem*{ABC}{\rm\bf Corollary~\ref{NGhvector}}
\begin{ABC}
{\it Let $R=\kk[S]$ be a semi-standard graded Cohen-Macaulay affine semigroup ring.
  If $R$ is not Gorenstein and $(\bfx^\bfe : \bfe \in E_S)R \subseteq \tr(\omega_{R})$,
  then $h_{s(R)} \geqq 2$.
In particular, if $R$ is non-Gorenstein nearly Gorenstein, then $h_{s(R)} \geqq 2$.}
\end{ABC}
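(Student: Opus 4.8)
The plan is to apply Theorem~\ref{TraceIdealAffine} with $I=\omega_R$ realized as a canonical ideal, and then to convert the conclusion $\dim_\kk I_b\geq 2$ into $h_{s(R)}\geq 2$. Since an affine semigroup ring is a domain (it sits inside the group algebra $\kk[\ZZ S]$), it is generically Gorenstein and hence admits a graded embedding of $\omega_R$ as an ideal $I\subseteq R$; I set $b=\min\{i:I_i\neq 0\}$. The trace hypothesis $(\bfx^\bfe:\bfe\in E_S)R\subseteq\tr(\omega_R)=\tr(I)$ is precisely the one required by the theorem (the trace ideal depends only on the isomorphism class of the module), so what remains is to check the two structural hypotheses, that $I$ is non-principal and that $\depth R\geq 2$, and then to compute $\dim_\kk I_b$.

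For non-principality I would use that $R$ is a domain: a nonzero principal ideal is isomorphic to $R$, so if $I=\omega_R$ were principal then $\omega_R\cong R$ and $R$ would be Gorenstein, against the hypothesis. For the depth condition, since $R$ is Cohen--Macaulay one has $\depth R=\dim R$, so it suffices to exclude $\dim R\leq 1$. The case $\dim R=0$ gives $R=\kk$, which is Gorenstein. For $\dim R=1$ the group $\ZZ S$ has rank one, so after orienting $S\subseteq\NN$ with grading induced by a positive homomorphism; semi-standardness forces $R_1\neq 0$ (else $\kk[R_1]=\kk$ and $R$, being finite over it, would have dimension $0$), and then the degree-one generator $t^{s_1}$ must satisfy $S=s_1\NN$, whence $R\cong\kk[t^{s_1}]$ is again Gorenstein. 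Thus non-Gorensteinness forces $\depth R=\dim R\geq 2$, and the theorem applies.

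Next I would identify $\dim_\kk I_b$ with $h_{s(R)}$. A graded embedding of $\omega_R$ as an ideal only shifts degrees, so $\dim_\kk I_b=\dim_\kk(\omega_R)_c$, where $c$ is the initial degree of $\omega_R$. Graded local duality gives $H_{\omega_R}(t)=(-1)^d H_R(t^{-1})$, which I would rewrite as
\[
H_{\omega_R}(t)=\frac{h_s\,t^{d-s}+h_{s-1}\,t^{d-s+1}+\cdots+h_0\,t^{d}}{(1-t)^d},
\]
where $d=\dim R$ and $s=s(R)$. Since $(1-t)^{-d}$ has constant term $1$, the initial degree of $\omega_R$ equals $d-s$ and $\dim_\kk(\omega_R)_{d-s}=h_s=h_{s(R)}$. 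Combining this with Theorem~\ref{TraceIdealAffine} then yields $h_{s(R)}=\dim_\kk I_b\geq 2$.

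For the final assertion I would invoke the definition of nearly Gorenstein, namely $\mathfrak{m}\subseteq\tr(\omega_R)$ with $\mathfrak{m}=\bigoplus_{i>0}R_i$. Each $\bfx^\bfe$ with $\bfe\in E_S$ has positive degree and hence lies in $\mathfrak{m}$, so $(\bfx^\bfe:\bfe\in E_S)R\subseteq\mathfrak{m}\subseteq\tr(\omega_R)$ and the first part applies. I expect the main obstacle to be the two reductions that the statement of Theorem~\ref{TraceIdealAffine} does not directly hand me: verifying $\depth R\geq 2$ (that is, that the low-dimensional cases are forced to be Gorenstein) and pinning down that the \emph{bottom} graded piece of the canonical ideal has dimension exactly $h_{s(R)}$ rather than some earlier or cancelled coefficient; both hinge on the precise bookkeeping of degrees in the canonical module.
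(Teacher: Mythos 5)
Your proof is correct and takes essentially the same route as the paper: realize $\omega_R$ as a canonical ideal $I_R$, use $\tr(\omega_R)=\tr(I_R)$ together with $h_{s(R)}=\dim_\kk(\omega_R)_{-a(R)}$ (via the duality $H_{\omega_R}(t)=(-1)^{\dim R}H_R(t^{-1})$), and apply Theorem~\ref{TraceIdealAffine}. The only difference is that you explicitly verify the hypotheses of that theorem --- non-principality of $I_R$ from non-Gorensteinness, and $\depth R\geqq 2$ by showing a semi-standard graded affine semigroup ring of dimension at most $1$ is forced to be a polynomial ring or $\kk$, hence Gorenstein --- which the paper's two-line proof leaves implicit.
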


The following theorem regarding the necessary and sufficient condition to be level and almost Gorenstein was also known in the case of standard graded rings, as shown in \cite{goto2015almost}. We give a new proof by using Stanley's inequalities (see Corollary \ref{prop1}), and extend their results to the case of semi-standard graded rings.

\newtheorem*{MbinTheorem}{\rm\bf Theorem~\ref{AGandLevel}}
\begin{MbinTheorem}
{\it Let $R$ be a Cohen--Macaulay semi-standard graded ring with $\dim R>0$.
Suppose that $R$ is not Gorenstein.
Then the following conditions are equivalent:}
\begin{itemize}
\item[(1)] {\it$R$ is almost Gorenstein and level;}
\item[(2)] {\it$R$ is generically Gorenstein and $s(R)=1$.}
\end{itemize}
\end{MbinTheorem}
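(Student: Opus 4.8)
The plan is to read both conditions off the Hilbert series of the canonical module $\omega_R$, using two structural features of the semi-standard situation: that $\omega_R(-a)$ is a maximal Cohen--Macaulay module whose $h$-polynomial is the reversal of $h(R)$, and that a semi-standard ring admits a homogeneous system of parameters lying in $R_1$. Write $d=\dim R$ and $a=s(R)-d$ for the $a$-invariant. After extending $\kk$ to an infinite field (a faithfully flat change of base preserving all the properties in play and $h(R)$), local duality gives
\[
\sum_i \bigl(\dim_\kk(\omega_R(-a))_i\bigr)\,t^i=\frac{h_s+h_{s-1}t+\cdots+h_0t^s}{(1-t)^d},
\]
so $\omega_R(-a)$ sits in degrees $\ge 0$ with $\dim_\kk(\omega_R(-a))_0=h_s\ne 0$. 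Since a module generated in a single degree is generated in its bottom degree, $R$ is level iff $\omega_R(-a)$ is generated in degree $0$, in which case $\type(R)=h_s$.

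For $(1)\Rightarrow(2)$ I would start from the defining exact sequence of almost Gorensteinness, $0\to R\to\omega_R(-a)\to C\to 0$ with $C$ Ulrich. Localizing at a minimal prime $\mathfrak p$ with $\dim R/\mathfrak p=d$ kills $C$ (as $\dim C<d$), so $(\omega_R)_{\mathfrak p}\cong R_{\mathfrak p}$ and $R_{\mathfrak p}$ is Gorenstein; hence $R$ is generically Gorenstein. Writing $H_C(t)=e\,t^c/(1-t)^{d-1}$ with $e=\mu(C)\ge 1$ and comparing numerators yields
\[
\sum_i (h_{s-i}-h_i)\,t^i=e\,(t^c-t^{c+1}).
\]
Now Stanley's inequalities (Corollary \ref{prop1}), applicable since $R$ is generically Gorenstein, force $h_{s-i}-h_i\ge 0$ for $i\le s/2$ and $\le 0$ for $i\ge s/2$; matching this sign pattern against a polynomial that is positive in degree $c$ and negative in degree $c+1$ pins $c=(s-1)/2$ with $s$ odd. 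If $s\ge 3$, then $c\ge 1$ and the constant term gives $h_s-1=0$, so $h_s=1$; but then levelness forces $\type(R)=h_s=1$, i.e. $R$ is Gorenstein, a contradiction. Hence $s(R)=1$.

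For $(2)\Rightarrow(1)$ I would first produce the almost Gorenstein sequence. Because $R$ is generically Gorenstein and $\kk$ is infinite, a sufficiently general element of $(\omega_R(-a))_0$ gives a degree-preserving map $\phi\colon R\to\omega_R(-a)$ that is an isomorphism at every minimal prime, so $C:=\cok\phi$ has $\dim C\le d-1$ and $\phi$ is injective. The depth lemma applied to $0\to R\to\omega_R(-a)\to C\to 0$ gives $\depth C\ge d-1$, whence $C$ is Cohen--Macaulay of dimension $d-1$; comparing Hilbert series with $s(R)=1$ gives $H_C(t)=(h_1-1)/(1-t)^{d-1}$, so $C$ is generated in one degree with $\mu(C)=e(C)=h_1-1$, i.e. Ulrich, and $C\ne 0$ because $R$ is not Gorenstein. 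Thus $R$ is almost Gorenstein. For levelness I reduce to the Artinian ring: semi-standardness gives a homogeneous system of parameters $\theta_1,\dots,\theta_d\in R_1$, which is regular since $R$ is Cohen--Macaulay, and $\bar R=R/(\theta)$ is Artinian with Hilbert function $(1,h_1)$. By Nakayama $\omega_R(-a)$ is generated in degree $0$ iff $\omega_{\bar R}$ is, equivalently iff the socle of $\bar R$ is concentrated in a single degree; but $\bar R_i=0$ for $i\ge 2$ forces $\mathrm{soc}(\bar R)=\bar R_1$, so $\bar R$, and therefore $R$, is level.

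The step I expect to be the main obstacle is the construction of the embedding $\phi\colon R\hookrightarrow\omega_R(-a)$ with Ulrich cokernel in $(2)\Rightarrow(1)$. The genericity argument needs a single degree-zero element of $\omega_R$ that generates $\omega_R$ at all minimal primes simultaneously, which is where the reduction to an infinite field and a prime-avoidance argument over $\Min(R)$ are essential; and one must verify that $\cok\phi$ is genuinely maximal Cohen--Macaulay of dimension $d-1$ rather than merely having the predicted Hilbert series, for which the depth lemma is invoked only after injectivity and the dimension bound on $C$ are in hand. By contrast, the reduction to the Artinian ring $\bar R$ for levelness is where semi-standardness is used decisively, since it is precisely what guarantees a system of parameters inside $R_1$.
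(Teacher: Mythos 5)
Your proposal is correct in substance and lives inside the same framework as the paper: the exact sequence $0 \to R \to \omega_R(-a) \to C \to 0$, generic Gorensteinness of almost Gorenstein rings via localization at minimal primes (the paper's Lemma \ref{AGisGG}), the embedding for generically Gorenstein level rings via a general degree-zero element (the paper's Propositions \ref{nzd} and \ref{x}), and the Hilbert series of $C$ (Theorem \ref{AGtokucho}). But your mechanism for $(1)\Rightarrow(2)$ is genuinely different. The paper translates $\mu(C)=e(C)$ together with levelness into the identity $(s-1)(h_s-1)=\sum_{j=1}^{\lfloor s/2\rfloor}(s-2j)(h_j-h_{s-j})$ and defeats it by a weighted combination of Stanley's partial-sum inequalities, concluding $h_s\leqq 1$; you instead pin down $H_C(t)$ exactly as a ``pure'' series and exploit the antisymmetry of $\sum_i(h_{s-i}-h_i)t^i$ under $i\mapsto s-i$, which avoids the inequality bookkeeping entirely and is arguably more direct. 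Your $(2)\Rightarrow(1)$ is essentially the paper's, except that you supply details (prime avoidance over $\Min(R)$, the Artinian socle argument for levelness) which the paper delegates to citations or to ``we can check''.

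Three repairable slips. First, and most importantly, the form $H_C(t)=e\,t^c/(1-t)^{d-1}$ does \emph{not} follow from $C$ being Ulrich: an Ulrich module may have minimal generators in several degrees, giving $H_C(t)=\bigl(\sum_i t^{c_i}\bigr)/(1-t)^{d-1}$ with distinct $c_i$. What forces a single degree is levelness of $R$: $C$ is a quotient of $\omega_R(-a)$, which is generated in degree $0$, hence $c=0$. But once you know $c=0$, your own antisymmetry relation $s-c=c+1$ gives $s=1$ immediately, and the whole discussion of the parity of $s$ and the case $s\geqq 3$ collapses; as written, that discussion masks the fact that the single-degree claim was never justified. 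Second, Corollary \ref{prop1} bounds partial sums, $\sum_{i=0}^{j}(h_{s-i}-h_i)\geqq 0$ for $j\leqq\lfloor s/2\rfloor$; your termwise statement $h_{s-i}-h_i\geqq 0$ for $i\leqq s/2$ does not follow from it (e.g.\ $(1,3,5,2,4)$ satisfies all the partial-sum inequalities while $h_3<h_1$). Fortunately you never need it: the antisymmetry of the left-hand polynomial is automatic from its definition. Third, in $(2)\Rightarrow(1)$ your prime-avoidance construction of $\phi$ requires knowing that for each minimal prime $\p$ the degree-zero part of $\omega_R(-a)$ generates $(\omega_R)_{\p}$, and that is exactly what levelness provides; since your proof of levelness is independent of $\phi$, you should simply prove levelness first and only then build the embedding, which also matches the order of the paper's argument.
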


At the same time, we investigate specific properties that do not hold in standard graded rings but hold for semi-standard graded rings.
For instance, there are intriguing differences
if we consider nearly Gorenstein affine semigroup rings with projective dimension 2. In the case of standard graded affine semigroup rings, there exist non-Gorenstein yet nearly Gorenstein rings with projective dimension 2, and their characterization is provided for the case of projective monomial curves (see \cite[Theorem A]{miyashita2023nearly}). However, in the case of non-standard semi-standard graded rings, it turns out that there is no non-Gorenstein nearly Gorenstein ring with projective dimension 2.
\newtheorem*{MMbinTheorem}{\rm\bf Theorem~\ref{pd2NGsemi}}
\begin{MMbinTheorem}
    {\it Let $R$ be a non-standard semi-standard graded Cohen-Macaulay affine semigroup ring with projective dimension 2.
    Then the following conditions are equivalent:}
\begin{itemize}
\item[(1)]{\it$R$ is nearly Gorenstein;}
\item[(2)]{\it$R$ is Gorenstein.}
\end{itemize}
\end{MMbinTheorem}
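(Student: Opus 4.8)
The implication $(2)\Rightarrow(1)$ is immediate: every Gorenstein ring is nearly Gorenstein, since $\omega_R\cong R$ gives $\tr(\omega_R)=R$. For $(1)\Rightarrow(2)$ I would argue by contradiction, assuming $R$ nearly Gorenstein but not Gorenstein. The plan is to play three facts against one another: the lower bound $h_{s(R)}\ge 2$ coming from Corollary~\ref{NGhvector}, the Hilbert--Burch description of $\omega_R$ valid in projective dimension $2$, and the existence---forced by non-standardness---of a semigroup generator of degree $\ge 2$.

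First, nearly Gorensteinness gives $\mathfrak m\subseteq\tr(\omega_R)$, and since each $\bfx^\bfe$ ($\bfe\in E_S$) lies in $\mathfrak m$ we get $(\bfx^\bfe:\bfe\in E_S)R\subseteq\tr(\omega_R)$; as $R$ is not Gorenstein, Corollary~\ref{NGhvector} yields $h_{s(R)}\ge 2$, which is to say that $\omega_R$ has at least two minimal generators in its least degree $b=\min\{i:(\omega_R)_i\neq0\}$. Next, a nearly Gorenstein ring is generically Gorenstein, so $\omega_R$ is (up to isomorphism) a monomial ideal of $R$ and $\tr(\omega_R)=\omega_R\,\omega_R^{-1}$ with $\omega_R^{-1}=\Hom(\omega_R,R)$. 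Finally, since $R$ is Cohen--Macaulay with $\operatorname{pd} R=2$ it has codimension $2$, so Hilbert--Burch provides a minimal free resolution $0\to F_2\xrightarrow{\varphi}F_1\to R\to0$ over the semigroup polynomial ring with $\operatorname{rank}F_1=\operatorname{rank}F_2+1$, and $\omega_R$ is the cokernel of $\varphi^{\mathsf T}$ up to a shift; in particular the type of $R$ equals $\operatorname{rank}F_2$, which is $\ge 2$ as $R$ is not Gorenstein.

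Now I would bring in non-standardness. Because $R\neq\kk[R_1]$, the semigroup $S$ has an indecomposable generator $\mathbf g$ with $\deg\mathbf g\ge 2$; put $u=\bfx^{\mathbf g}$. Since $R$ is not Gorenstein, $\tr(\omega_R)=\mathfrak m$, so $u\in\tr(\omega_R)=\omega_R\,\omega_R^{-1}$. Writing $\omega_R$ as the ideal spanned by $\{\bfx^\bfa:\bfa\in\Omega\}$, where $\Omega$ is the set of relative-interior lattice points of the cone $\RR_{\ge0}S$, this membership means that there exists $\bfa\in\Omega$ with $\mathbf g-\bfa+\Omega\subseteq S$. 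Each element of $\Omega$ has positive degree, so $\deg\bfa\ge1$; if in addition the witness $\mathbf c:=\mathbf g-\bfa$ lies in $S$ and has positive degree, then $\mathbf g=\bfa+\mathbf c$ is a factorization into two nonzero elements of $S$, contradicting the indecomposability of $\mathbf g$.

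The main obstacle is to rule out the two ways this factorization can fail. On the one hand, $\omega_R^{-1}$ may contain fractional elements, so a priori $\mathbf c$ satisfies only $\mathbf c+\Omega\subseteq S$ and need not lie in $S$; this fractional loophole is precisely what allows non-Gorenstein nearly Gorenstein rings of projective dimension $2$ in the standard-graded case, where the analogous generators sit in degree $1$ (cf.\ \cite{miyashita2023nearly}). On the other hand, the witness may be trivial, $\mathbf c=0$, i.e.\ $\mathbf g\in\Omega$; but then indecomposability of $\mathbf g$ forbids writing $\mathbf g=\bfa'+\bfs$ with $\bfa'\in\Omega$ and $0\neq\bfs\in S$, so $\mathbf g$ is itself a minimal generator of $\omega_R$ of degree $\ge2$, coexisting with the two bottom-degree generators supplied by $h_{s(R)}\ge2$. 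I expect the codimension-$2$ hypothesis to exclude both scenarios: the Hilbert--Burch matrix $\varphi$ determines $\Omega$, and thereby both $\omega_R^{-1}$ and the generator degrees of $\omega_R$, rigidly enough that neither a purely fractional witness nor an additional high-degree generator of $\omega_R$ is compatible with a codimension-$2$ resolution. Carrying out this degree bookkeeping on $\varphi$ and $\varphi^{\mathsf T}$---and showing that it forces the type down to $1$---is the technical heart of the argument, and is exactly the point at which non-standardness, through $\deg\mathbf g\ge2$, is indispensable.
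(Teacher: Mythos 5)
Your reduction of $(1)\Rightarrow(2)$ stops exactly where the real work has to begin: after setting up the contradiction you write that you \emph{expect} codimension-$2$ rigidity to exclude both a fractional witness and an extra high-degree generator of $\omega_R$, and that carrying out this bookkeeping ``is the technical heart of the argument.'' That heart is absent, so what you have is a plan, not a proof. Moreover the plan rests on a false premise: you describe $\omega_R$ as the ideal spanned by the relative-interior lattice points of $\RR_{\geqq 0}S$. That description (Danilov--Stanley) requires $S$ to be normal, which is not assumed and typically fails in this setting --- the interesting semi-standard graded semigroup rings in this paper (e.g.\ those of Proposition~\ref{NGfamily}) are non-normal, and $\overline{S}\setminus S\neq\emptyset$ is precisely what makes their canonical modules subtle. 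So the set $\Omega$ you manipulate need not be $V_{\omega_R}$, and your dichotomy ``fractional witness versus zero witness'' is not the right case split.

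The lever you are missing, and the one the paper uses, is that the type is pinned down \emph{exactly}: by Peeva--Sturmfels (Proposition~\ref{codim2peeva}) a codimension-$2$ Cohen--Macaulay non-Gorenstein toric ideal has exactly $3$ generators and $2$ syzygies, so $r(R)=2$ (not merely $\geqq 2$ as you assert); and for a type-$2$ domain, Theorem~\ref{trace}(b) gives $\tr(\omega_R)=I_1(\phi_2)$, the ideal generated by the six monomial entries of the Hilbert--Burch matrix. Nearly Gorensteinness then says these six monomials generate $\mathbf{m}$, which at once bounds the number of variables (so $\dim R\leqq 4$) and forces most entries to be single variables; combined with Proposition~\ref{extremal} (extremal-ray generators have degree $1$), levelness of $R$ via Corollary~\ref{type2NG}, and primality of the defining ideal, a finite case analysis of the possible matrices yields a contradiction in every case, with non-standardness ruling out exactly those configurations that would force all generators into degree $1$. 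Your appeals to Corollary~\ref{NGhvector} and to Hilbert--Burch are consistent with this, but without Theorem~\ref{trace}(b) converting nearly Gorensteinness into a concrete statement about the matrix entries, the ``degree bookkeeping on $\varphi$ and $\varphi^{\mathsf T}$'' you hope for has no handle, and indeed you never produce it.
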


Moreover, in standard graded affine semigroup rings, it is known that there is no instance that it is non-level almost Gorenstein and nearly Gorenstein(see Theorem \ref{AGandLevel} and Theorem \ref{AGandNG!}). However, in the case of semi-standard graded affine semigroup rings, such a special family is known to exist when the socle degree is 2. For the case of dimension 2, this family can be characterized as follows. The proof of this assertion relies significantly on the proof presented in \cite[Theorem 3.5]{higashitani2018non}.

\newtheorem*{MdinTheorem}{\rm\bf Theorem~\ref{nonlevelAGwithSocleDeg2}}
\begin{MdinTheorem}
{\it Let $R=\kk[S]$ be a Cohen--Macaulay semi-standard graded affine semigroup ring with $\dim R=s(R)=2$. Then the following conditions are equivalent:}
\begin{itemize}
\item[(1)]{\it$R$ is non-level and almost Gorenstein;}
\item[(2)]{\it$S \cong \langle \{(2i,2n-2i): 0 \leqq i \leqq n \} \bigcup \{ (2j+2k-1,4n-2j-2k+1): 0 \leqq j \leqq n-1 \} \rangle$ for some $n \geqq 2$ and $1 \leqq k \leqq n+1$.}
\end{itemize}
{\it Moreover, if this is the case, then $R$ is always nearly Gorenstein and $h(R)=(1,n-1,n)$.}
\end{MdinTheorem}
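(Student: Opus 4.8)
The plan is to prove the two implications separately, settling the numerical invariants (the $a$-invariant, the $h$-vector, and the meaning of non-levelness) before descending to the fine combinatorics of $S$, and then to read the ``moreover'' off the resulting description.

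First I would set up the canonical module. Since $s(R)=\dim R=2$, the $a$-invariant is $a(R)=s(R)-\dim R=0$; writing $h(R)=(1,h_1,h_2)$, the functional equation of Hilbert series gives that $\omega_{R}$ has numerator $h_2+h_1t+t^2$, so $\omega_{R}$ begins in degree $0$ with $\dim_\kk(\omega_{R})_0=h_2$. As $R=\kk[S]$ is a domain it is generically Gorenstein, so the almost Gorenstein hypothesis furnishes a degree-preserving short exact sequence $0\to R\xrightarrow{\,\cdot\xi\,}\omega_{R}\to C\to 0$ with $\xi\in(\omega_{R})_0$ and $C$ an Ulrich module of dimension $1$. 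Subtracting Hilbert series yields $H_C(t)=(h_2-1)\tfrac{1+t}{1-t}$, so $e(C)=2(h_2-1)$; since the degree-$0$ part of $C$ has dimension $h_2-1$ and the degree-$1$ minimal generators of $C$ coincide with those of $\omega_{R}$, the Ulrich equality $\mu(C)=e(C)$ forces $\omega_{R}$ to be minimally generated in degrees $0$ and $1$, with exactly $h_2-1$ generators in degree $1$. Reading levelness as concentration of the socle of a (linear) Artinian reduction in top degree, I conclude that $R$ is non-level precisely when $\omega_{R}$ has a generator outside degree $0$, i.e. when $h_2\geq 2$; set $n:=h_2\geq 2$.

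Next I would pass to the geometry of $S$. The level-$m$ monomials lie on the slice of the rank-$2$ cone at level $m$, hence are collinear, and Cohen--Macaulayness gives $\dim_\kk R_1=h_1+2$ and $\dim_\kk R_2=3+2h_1+h_2$; comparing the latter with the Minkowski-sum count of the pairwise products of $R_1$ shows there are exactly $h_2$ ``extra'' degree-$2$ generators, which is what makes $R$ non-standard. The crux is then to show that Cohen--Macaulayness, socle degree exactly $2$, and non-levelness force the level-$1$ slice to consist of $n+1$ equally spaced points (so $h_1=n-1$ and $h(R)=(1,n-1,n)$) and the extra generators to occupy a run of $n$ consecutive offset lattice points; after a unimodular normalization these become $(2i,2n-2i)$ and $(2j+2k-1,4n-2j-2k+1)$, giving the asserted presentation with $1\le k\le n+1$. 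This classification, which must simultaneously control the Cohen--Macaulay locus of $S$, the degree-$1$ generation of $\omega_{R}$ found above, and the admissible positions of the extra points, is the affine-semigroup analogue of \cite[Theorem 3.5]{higashitani2018non} and is where I expect the main difficulty to lie: ruling out non-consecutive placements (which would either create a hole destroying Cohen--Macaulayness, forcing a negative higher $h_i$, or raise the socle degree) is the delicate point, and it is precisely what pins the relation $h_2=h_1+1$ that the Hilbert series of $C$ alone cannot see.

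For the converse $(2)\Rightarrow(1)$ I would start from the explicit $S$ and verify everything by lattice-point bookkeeping: counting points on the level-$m$ slices gives the Hilbert series $\tfrac{1+(n-1)t+nt^2}{(1-t)^2}$, hence $h(R)=(1,n-1,n)$; Cohen--Macaulayness holds because the prescribed run of extra points fills every slice of level $\geq 3$, leaving no obstructing hole; non-levelness follows by exhibiting the degree-$1$ minimal generator of $\omega_{R}$ isolated in the second paragraph; and the almost Gorenstein property follows by realizing the cokernel $C$ explicitly and checking $\mu(C)=e(C)=2(n-1)$. For the last assertion I would compute the trace ideal $\tr(\omega_{R})$ as a monomial ideal and show it contains every degree-$1$ monomial $\bfx^{(2i,2n-2i)}$, so that $(\bfx^\bfe:\bfe\in E_S)R\subseteq\tr(\omega_{R})$ and in fact $\tr(\omega_{R})\supseteq\mathfrak{m}$; this yields that $R$ is nearly Gorenstein, consistently with Corollary~\ref{NGhvector} since $h_{s(R)}=n\ge 2$.
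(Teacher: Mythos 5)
Your numerical setup and your plan for $(2)\Rightarrow(1)$ are consistent with the paper: with $a(R)=0$ the degree-zero embedding $R\hookrightarrow\omega_R$ gives $H_C(t)=(h_2-1)\tfrac{1+t}{1-t}$, hence $e(C)=2(h_2-1)$ and $r(R)=2h_2-1$ under the almost Gorenstein hypothesis, which is exactly what the paper extracts from Theorem \ref{AGtokucho}, Corollary \ref{tokuchou} and Proposition \ref{type}; and your verification plan for the explicit family is the same in spirit as Proposition \ref{NGfamily}, though you leave unexecuted the real work done there (computing $\omega_S$ via \cite[Theorem 3]{goto1976affine}, checking $\ZZ S=S\cup C_1\cup C_2$, and verifying near-Gorensteinness via Corollary \ref{NGsemigroup}).

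The genuine gap is the core of $(1)\Rightarrow(2)$, and you flag it yourself as ``where I expect the main difficulty to lie.'' The claim that Cohen--Macaulayness, $s(R)=2$ and non-levelness force the degree-one slice to be $n+1$ equally spaced points and the degree-two generators to form a consecutive arithmetic run \emph{is} the theorem; your proposal offers only the heuristic that other placements ``would either create a hole destroying Cohen--Macaulayness, forcing a negative higher $h_i$, or raise the socle degree,'' which is not an argument. Your earlier count of ``exactly $h_2$ extra degree-2 generators'' is also circular at that stage: it needs $\dim_\kk(\kk[R_1])_2=2h_1+3$, i.e.\ that the points of $R_1$ already form an arithmetic progression, which is part of what must be proved. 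The paper does not re-prove this classification from scratch; it imports the structural output of the proof of \cite[Theorem 3.5]{higashitani2018non}, namely $R\cong B\oplus C$ as $B=\kk[R_1]$-modules with $B\cong\bigoplus_i T_{ni}$ (the $n$-th Veronese of $T=\kk[x,y]$, which yields $h_1=n-1$, $h_2=n$ and the equal spacing of $V$) and $C(2)\cong\bigoplus_i T_{n-1+ni}$, and only then pins down $W=\{(a+ik,2nd-a-ik):0\leqq i\leqq n-1\}$, proving $d=k$ by a dimension count in degree $3$ and the parity relation $2a=(2k-1)d$ by one in degree $4$. Finally, a point your ``unimodular normalization'' glosses over: statement (2) asserts an isomorphism of \emph{semigroups}, and the paper must invoke Gubeladze's theorem \cite[Theorem 2.1 (b)]{gubeladze1998isomorphism} to pass from the $\kk$-algebra isomorphism $\kk[S]\cong\kk[\langle V\cup W\rangle]$ to $S\cong\langle V\cup W\rangle$. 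Without either the Higashitani--Yanagawa structure theorem (or a self-contained replacement) and the Gubeladze step, your proof of the hard direction is missing its center.
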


Furthermore, it is known that every $1$-dimensional almost Gorenstein ring is nearly Gorenstein ring (see \cite[Proposition 6.1]{herzog2019trace}). If we consider non-standard semi-standard graded affine semigroup rings with socle degree 2, we can establish that a 2-dimensional version of this statement holds true.

\newtheorem*{McinTheorem}{\rm\bf Theorem~\ref{AGisNGw}}
\begin{McinTheorem}
{\it Let $R$ be a non-standard semi-standard graded Cohen-Macaulay affine semigroup ring with $\dim R=s(R)=2$.
If $R$ is almost Gorenstein, then it is nearly Gorenstein.}
\end{McinTheorem}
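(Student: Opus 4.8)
The plan is to reduce the statement to a dichotomy governed by the levelness of $R$, feeding into the two available structure theorems for socle degree $2$. The Gorenstein case is vacuous, so the genuine content is to show that a non-Gorenstein almost Gorenstein ring satisfying the hypotheses is forced to be non-level; once that is known, the classification in Theorem~\ref{nonlevelAGwithSocleDeg2} yields near-Gorensteinness with no further work.

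First I would dispose of the Gorenstein case, in which $\tr(\omega_R)=R$ and $R$ is nearly Gorenstein by definition; thus I may assume $R$ is not Gorenstein. Next I claim $R$ cannot be level. Indeed, were $R$ level, it would be a non-Gorenstein Cohen--Macaulay semi-standard graded ring that is both almost Gorenstein and level, so Theorem~\ref{AGandLevel} would give $s(R)=1$, contradicting $s(R)=2$. Hence $R$ is non-level. At this point $R$ is a non-level, almost Gorenstein, Cohen--Macaulay semi-standard graded affine semigroup ring with $\dim R=s(R)=2$, so it satisfies condition (1) of Theorem~\ref{nonlevelAGwithSocleDeg2}. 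The ``moreover'' clause of that theorem asserts exactly that any such $R$ is nearly Gorenstein, which closes the argument.

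The non-standard hypothesis is what guarantees this regime is non-empty: by the same chain, any \emph{standard} graded ring meeting the other hypotheses would be classified by the non-standard semigroup in condition (2) of Theorem~\ref{nonlevelAGwithSocleDeg2}, which is impossible, so in the standard case $R$ would be automatically Gorenstein. The main obstacle is therefore not in the present deduction, which is a short piece of logical bookkeeping, but in the ``moreover'' part of Theorem~\ref{nonlevelAGwithSocleDeg2} on which it rests: there one must show, for each semigroup $S\cong\langle\{(2i,2n-2i)\}\cup\{(2j+2k-1,4n-2j-2k+1)\}\rangle$ of the prescribed form, that $\tr(\omega_R)$ contains the graded maximal ideal. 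Granting that theorem, the only point requiring care here is to confirm that the socle-degree constraint genuinely rules out the level case via Theorem~\ref{AGandLevel}, so that no separate trace computation is needed in the body of this proof.
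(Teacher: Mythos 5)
Your proposal is correct and follows essentially the same route as the paper's own (two-line) proof: deduce from Theorem~\ref{AGandLevel} that an almost Gorenstein ring with $s(R)=2$ cannot be level, then invoke the ``moreover'' clause of Theorem~\ref{nonlevelAGwithSocleDeg2} to conclude near-Gorensteinness. Your explicit treatment of the Gorenstein case (which the paper leaves implicit, since Theorem~\ref{AGandLevel} assumes non-Gorenstein) and your closing remark on the role of the non-standard hypothesis are accurate but do not change the argument.
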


The structure of this paper is as follows.
In Section 2, we prepare some definitions and facts for the discussions later.
In Section 3, we first organize the general theory concerning nearly Gorenstein semi-standard graded affine semigroup rings based on \cite{miyashita2022levelness}. We extend the results for nearly Gorenstein standard graded affine semigroup rings from \cite{miyashita2022levelness} to the case of semi-standard graded.
Moreover, we prove a statement concerning the trace ideal of (not necessarily Cohen-Macaulay) semi-standard graded affine semigroup rings with depth greater than or equal to 2.
Additionally, we discuss special properties of nearly Gorenstein semi-standard graded rings with small projective dimensions and provide a special family of nearly Gorenstein rings with socle degree 3.
In Section 4, we establish the general theory for almost Gorenstein semi-standard graded rings based on \cite{higashitani2016almost}. We demonstrate that the theory regarding $h$-vectors can be directly applied to the case of semi-standard graded rings.
In Section 5, we compare between almost Gorenstein rings and level rings. Utilizing the framework organized in Section 4, and considering cases with small socle degrees, we explore relationships between almost Gorenstein and level properties by using Stanley's inequalities.
In Section 6, we compare almost Gorenstein rings with nearly Gorenstein rings. For semi-standard graded affine semigroup rings with both socle degree and dimension equal to 2, we completely determine the structure of non-level and almost Gorenstein rings. Furthermore, we prove that this structure coincides with the special family of nearly Gorenstein rings introduced in Section 3.

\subsection*{Acknowledgement}
I am grateful to professor
Akihiro Higashitani for his
comments and instructive discussions.
Thanks to his very helpful advice, I could discover Theorem \ref{TraceIdealAffine}.

\section{Preliminaries}
Let $\kk$ be a field,
and let $R$ be an $\mathbb{N}$-graded $\kk$-algebra
with a unique graded maximal ideal $\mathbf{m}$.
Apart from Section 2, we always assume that $R$ is Cohen-Macaulay and admits a canonical module $\omega_R$.
\begin{itemize}
\item Let $\omega_R$ denote a canonical module of $R$. Let $a(R)$ denote the $a$-invariant of $R$, i.e., $a(R)=-\min\{n:(\omega_R)_n \neq 0\}$.
  \item Let $r(R)$ be the Cohen-Macaulay type of $R$,
  and let $\pd(R)$ be the projective dimension of $R$.
\item For a graded $R$-module $M$ and $N$, we use the following notation:
\begin{itemize}
\item Let $\mu(M)$ denote the number of minimal generators of $M$ as an $R$-module.
\item Let $e(M)$ denote the multiplicity of $M$. Then the inequality $\mu(M) \leq e(M)$ always holds. 
\item Fix an integer $k$.
Let $M(-k)$ denote the $R$-module whose grading is given by $M(-k)_n=M_{n-k}$
for any $n \in \mathbb{Z}$.
Moreover, if $k>0$, we write $M^{\oplus k} = M \oplus M \oplus \cdots \oplus M$ ($k$ times).
\item Let $\tr_R(M)$ be the sum of ideals $\phi(M)$
with $\phi \in \Hom^*_R(M,R)$. That is,
\[\tr_R(M)=\sum_{\phi \in \Hom^*_R(M,R)}\phi(M)\]
where $\Hom^*_R(M,R)=\{ \phi \in \Hom_R(M,R): \phi \text{\;is\;a\;graded\;homomorphism} \}.$
When there is no risk of
confusion about the ring we simply write $\tr(M)$.

If $M$ and $N$ are isomorphic as graded $R$-module,
then $\tr(M)=\tr(N)$.
\end{itemize}
\end{itemize}

Let us recall the definitions and facts of the nearly Gorenstein property and level property
of graded rings.

\begin{Definition}[{see \cite[Chapter III, Proposition 3.2]{stanley2007combinatorics}}]
We say that $R$ is\;$\textit{level}$\: if all the degrees of the minimal
generators of $\omega_R$ are the same.
\end{Definition}
\begin{Definition}[{see \cite[Definition 2.2]{herzog2019trace}}]
We say that $R$ is $\textit{nearly Gorenstein}$ if $\tr(\omega_R) \supseteq \mathbf{m}$.
In particular,  $R$ is nearly Gorenstein but not Gorenstein
if and only if $\tr(\omega_R) = \mathbf{m}$.
\end{Definition}

Let $R$ be a ring and $I$ an ideal of $R$ containing
a non-zero divisor of $R$. Let $Q(R)$ be the total
quotient ring of fractions of $R$ and set
$\displaystyle I^{-1}:=\{x \in Q(R):xI\subset R\}.$ Then
\begin{equation}\numberwithin{equation}{section}\label{traceformula}
\displaystyle \tr(I)=II^{-1}
\end{equation}
(see \cite[Lemma 1.1]{herzog2019trace}).

If $R$ is an $\mathbb{N}$-graded ring, then
$\omega_R$ is isomorphic to an ideal $I_R$ of $R$ as an $\mathbb{N}$-graded module
up to degree shift if and only if $R_{\mathfrak{p}}$ is Gorenstein
for every minimal prime ideal $\mathfrak{p}$
(for example, if $R$ is a domain).
We call $I_R$ the canonical ideal of $R$.

\begin{Remark}[{see \cite[Chapter I, Section 12]{stanley2007combinatorics}}]
Fix an integer $n$ with $n \geqq  2$. If $R$ is an $\mathbb{N}^n$-graded domain, then
$\omega_R$ is isomorphic to an ideal of $R$ as an $\mathbb{N}^n$-graded module
up to degree shift.
\end{Remark}

We recall some definitions about affine semigroups.
An \textit{affine semigroup} $S$ is a finitely generated sub-semigroup of $\mathbb{Z}^d.$
For $X \subseteq S$, we denote
by $\langle X \rangle$
the smallest sub-semigroup of $S$ containing $X$.
We denote the group generated by $S$ by $\ZZ S$,
the convex cone generated by $S$ by $\RR_{\geqq 0} S\subseteq \RR^d$
and the normalization by $\overline{S}=\ZZ{S} \cap \RR_{\geqq 0} S$.
The affine semigroup $S$ is pointed
if $S\cap(-S)=\{0\}$.
We can check easily that every semi-standard graded affine semigroup is pointed.
It is known that $S$ is pointed if and only if
the associated cone $C=\RR_{\geqq0} S$ is pointed ({see \cite[Lemma 7.12]{miller2005combinatorial}}).
Moreover, every pointed affine semigroup $S$ has
a unique finite minimal generating set ({see \cite[Proposition 7.15]{miller2005combinatorial}}).
Thus $C=\RR_{\geqq0} S$ is a finitely generated cone.
A face $F \subseteq S$ of $S$ is a subset such that
for every $a,b \in S$ the following holds:
\[a+b \in F \Leftrightarrow a \in F \text{\;and\;} b \in F.\]
The dimension of the face $F$ equals the rank of $\ZZ F$.
The $1$-dimensional faces of a pointed semigroup $S$ are called its extremal rays.
We prepare the following basic lemma for proving Theorem 4.4.
We denote $(\bfa,\bfb)$ as inner product of $\bfa, \bfb \in \RR^d$.
\begin{Lemma}[{see \cite[Lemma 2.6]{miyashita2022levelness}}]\label{extremalpoly}
Let $d \geqq 2$ and let $S$ be a $d$-dimensional
pointed affine semigroup, and let $C=\RR_{\geqq 0} S$.
Let $E$ be the set of extremal rays of $C$.
If $\bfx \notin C$,
then there exists $l \in E$
such that $(\bfx + l) \cap C = \emptyset$.
\end{Lemma}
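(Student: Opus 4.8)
The plan is to avoid arguing with the semigroup $S$ directly and instead exploit the polyhedral (supporting half-space) description of the cone $C$. Since $S$ is finitely generated and pointed, $C=\RR_{\geqq 0}S$ is a pointed polyhedral cone of full dimension $d$, so it can be written as a finite intersection of supporting half-spaces
\[
C=\bigcap_{i=1}^{m}\{y\in\RR^d:(\bfa_i,y)\geqq 0\},
\]
where each hyperplane $\{y:(\bfa_i,y)=0\}$ cuts out a facet $F_i=C\cap\{y:(\bfa_i,y)=0\}$ of $C$. The guiding idea is that an extremal ray lying \emph{on} a violated facet points parallel to that facet, so a ray issued from $\bfx$ in that direction can never re-enter $C$.

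First I would use the hypothesis $\bfx\notin C$ to locate an index $i$ with $(\bfa_i,\bfx)<0$; such an $i$ exists precisely because membership in $C$ is characterized by the finitely many inequalities $(\bfa_i,y)\geqq 0$. Next I would produce an extremal ray of $C$ supported on the facet $F_i$. Here I would invoke the fact that, since $d\geqq 2$, the facet $F_i$ is itself a pointed cone of dimension $d-1\geqq 1$; being a face of $C$, any $1$-dimensional face of $F_i$ is again a face of $C$, i.e. an extremal ray $l\in E$ with $l\subseteq F_i$. Choosing a direction vector $v$ with $l=\RR_{\geqq 0}v$, we then have $(\bfa_i,v)=0$.

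With this ray in hand the conclusion is immediate: for every $t\geqq 0$,
\[
(\bfa_i,\bfx+tv)=(\bfa_i,\bfx)+t\,(\bfa_i,v)=(\bfa_i,\bfx)<0,
\]
so each point of $\bfx+l$ lies in the open half-space $\{y:(\bfa_i,y)<0\}$, which is disjoint from $C$. Hence $(\bfx+l)\cap C=\emptyset$, as required.

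I expect the main obstacle to be the second step, namely guaranteeing that the violated facet actually carries an extremal ray of the ambient cone. This is exactly where the hypothesis $d\geqq 2$ is indispensable: when $d=1$ the only ``facet'' is $\{0\}$, which contains no extremal ray, and indeed the statement is then false (a ray issued from a negative point eventually covers all of $\RR_{\geqq 0}$). Making the step precise relies on the standard facts that a face of a face is a face and that a pointed polyhedral cone of positive dimension has at least one extremal ray; once these are in place, the transversality identity $(\bfa_i,v)=0$ does all the remaining work.
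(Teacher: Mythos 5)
Your proof is correct: the irredundant half-space description of the full-dimensional pointed cone $C$, the choice of a facet inequality violated by $\bfx$, and the fact that (since $d\geqq 2$) that facet is a pointed cone of dimension $\geqq 1$ and hence contains an extremal ray of $C$ orthogonal to the facet normal, together give $(\bfx+l)\cap C=\emptyset$ exactly as you state. The paper itself offers no proof to compare against --- it imports the lemma verbatim from \cite[Lemma 2.6]{miyashita2022levelness} --- and your argument is the standard facet-based one that the cited result rests on, so there is nothing to flag.
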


\begin{Thm}[{see \cite[Theorem 3.1]{katthan2015non}}]\label{holedecomp}
  Let $S$ be a pointed affine semigroup.
  There exists a (not necessarily disjoint) decomposition
  \begin{equation}\label{hole}\overline{S} \setminus S = \bigcup_{i=1}^l(s_i+\ZZ F_i) \cap \RR_{\geqq 0}S
  \end{equation}
  with $s_i \in \overline{S}$
  and faces $F_i$ of $S$.
  \end{Thm}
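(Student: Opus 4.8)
The plan is to translate the statement into commutative algebra and read off the decomposition from a prime filtration of a single finitely generated graded module. After choosing coordinates we may assume $\ZZ S = \ZZ^d$, so that $C = \RR_{\geqq 0} S$ is a full-dimensional pointed cone and $\overline{S} = \ZZ^d \cap C$. Set $A = \kk[S] \subseteq \kk[\overline{S}] =: \overline{A}$, both regarded as $\ZZ^d$-graded subrings of the Laurent ring $\kk[\ZZ^d]$. Since $\overline{A}$ is the normalization of $A$ it is a finitely generated $A$-module, so $M := \overline{A}/A$ is a finitely generated $\ZZ^d$-graded $A$-module, and by construction $\dim_\kk M_x = 1$ exactly when $x \in \overline{S}\setminus S$ and $M_x = 0$ otherwise. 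Thus it suffices to describe the set $\{x : M_x \neq 0\}$ of degrees in which $M$ is supported.

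First I would recall the classification of the $\ZZ^d$-graded prime ideals of $A$: they are precisely the face primes $\mathfrak{p}_F = (\,t^s : s \in S\setminus F\,)$ indexed by the faces $F$ of $S$, with $A/\mathfrak{p}_F \cong \kk[F]$. Because $A$ is Noetherian and $M$ is finitely generated and graded, $M$ admits a finite prime filtration $0 = M_0 \subset M_1 \subset \cdots \subset M_n = M$ whose successive quotients are graded-isomorphic to $(A/\mathfrak{p}_{F_i})(-a_i) \cong \kk[F_i](-a_i)$ for suitable faces $F_i$ of $S$ and shifts $a_i$. Moreover $M$ is a torsion $A$-module (as $A$ and $\overline{A}$ share the same field of fractions), so $(0)$ is not in its support and every $F_i$ is a proper face of $S$. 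Every nonzero degree of $M$ lies in $\overline{S}\setminus S$, so the generator of each filtration step, and hence each $a_i$, may be taken in $\overline{S}$. Since $\dim_\kk M_x = \sum_i \dim_\kk \bigl(\kk[F_i](-a_i)\bigr)_x$, the degree-support of $M$ is the union of the degree-supports of the quotients, giving
\[
\overline{S}\setminus S \;=\; \bigcup_{i=1}^{n} (a_i + F_i),
\]
where each $F_i$ is viewed as the subsemigroup of $S$ that it is.

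Finally I would upgrade this to the stated form with the groups $\ZZ F_i$. Fix $i$ and let $y \in (a_i + \ZZ F_i)\cap C$; writing $g := y - a_i \in \ZZ F_i = F_i - F_i$ as $g = f_+ - f_-$ with $f_\pm \in F_i \subseteq S$, we obtain $y + f_- = a_i + f_+ \in a_i + F_i \subseteq \overline{S}\setminus S$, so $y + f_- \notin S$. If $y$ were in $S$ then $y + f_- \in S$, a contradiction; hence $y \notin S$, while $y \in \ZZ^d \cap C = \overline{S}$, so $y \in \overline{S}\setminus S$. This shows $(a_i + \ZZ F_i)\cap \RR_{\geqq 0}S \subseteq \overline{S}\setminus S$, and combined with $a_i + F_i \subseteq (a_i + \ZZ F_i)\cap \RR_{\geqq 0}S$ and the displayed equality it yields
\[
\overline{S}\setminus S \;=\; \bigcup_{i=1}^{n} (a_i + \ZZ F_i)\cap \RR_{\geqq 0}S,
\]
as required, with $s_i := a_i \in \overline{S}$. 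I expect the main obstacle to be the second step: assembling the graded prime filtration with quotients supported on honest (proper) faces of $S$ and verifying that the shifts can be chosen inside $\overline{S}$; the concluding passage from $F_i$ to $\ZZ F_i$ is short but is the crucial observation that makes the face groups, and hence the infinite families of holes, appear.
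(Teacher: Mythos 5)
Your proposal is correct, and it matches the source: the paper itself gives no proof of this statement, quoting it from \cite[Theorem 3.1]{katthan2015non}, and your argument --- a $\ZZ^d$-graded prime filtration of $\kk[\overline{S}]/\kk[S]$ with face-prime quotients $\kk[F_i](-a_i)$, so that the degree support gives $\overline{S}\setminus S=\bigcup_i (a_i+F_i)$, followed by the short upgrade showing $(a_i+\ZZ F_i)\cap\RR_{\geqq 0}S$ still consists of holes --- is essentially Katth\"an's own proof. The delicate points are all handled correctly: the shifts $a_i$ lie in $\overline{S}$ because the degree-$a_i$ piece of the subquotient injects into a nonzero graded piece of the module, the faces are proper because the module is torsion (so the zero ideal is not among the filtration primes), and the passage from $F_i$ to $\ZZ F_i$ via $y+f_-=a_i+f_+\notin S$ is exactly the right observation.
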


  A set
  $s_i+\ZZ F_i$ from (\ref{hole}) is called
  a $j$-dimensional family of holes,
  where $j$ is the dimension of $F_i$.
  
  \begin{Thm}[{see \cite[Theorem 5.2]{katthan2015non}}]\label{S2}
  Let $S$ be a pointed affine semigroup of dimension $d \geqq 2$.
Then $\depth \kk[S] \geqq 2$
if and only if
every family of holes has dimension at least $1$.
    \end{Thm}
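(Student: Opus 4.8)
The plan is to pass to local cohomology and compare $\kk[S]$ with its normalization. Since $S$ is pointed, $\mathbf{m}=(\mathbf{x}^s: 0\neq s\in S)$ is the (graded) maximal ideal and $\kk[S]$ is a domain, so every $\mathbf{x}^s$ with $s\neq 0$ is a nonzerodivisor; hence $H^0_{\mathbf{m}}(\kk[S])=0$ and $\depth\kk[S]\geq 1$, so that $\depth\kk[S]\geq 2$ is equivalent to $H^1_{\mathbf{m}}(\kk[S])=0$. I would then use the short exact sequence of $\ZZ^d$-graded $\kk[S]$-modules
\[
0\longrightarrow \kk[S]\longrightarrow \kk[\overline S]\longrightarrow T\longrightarrow 0,\qquad T:=\kk[\overline S]/\kk[S],
\]
where $\kk[\overline S]$ is the (module-finite) normalization and $T$ has $\kk$-basis $\{\mathbf{x}^h: h\in\overline S\setminus S\}$, i.e. $T$ is the module of holes. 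By Hochster's theorem $\kk[\overline S]$ is Cohen--Macaulay of dimension $d\geq 2$, so $H^0_{\mathbf{m}}(\kk[\overline S])=H^1_{\mathbf{m}}(\kk[\overline S])=0$, and the long exact sequence collapses to an isomorphism $H^1_{\mathbf{m}}(\kk[S])\cong H^0_{\mathbf{m}}(T)$. This reduces the theorem to showing that $H^0_{\mathbf{m}}(T)=0$ is equivalent to the condition that every family of holes has dimension at least $1$.

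Next I would reinterpret $H^0_{\mathbf{m}}(T)=0$ purely combinatorially. For any module, $H^0_{\mathbf{m}}(T)\neq 0$ is equivalent to $\mathbf{m}\in\Ass T$, and since $T$ is $\ZZ^d$-graded with one-dimensional graded pieces supported on the holes, such an associated prime is realized by a homogeneous element, necessarily a scalar multiple of a single $\mathbf{x}^h$ with $\ann(\mathbf{x}^h)=\mathbf{m}$. Unwinding the module structure, multiplication by $\mathbf{x}^s$ sends $\mathbf{x}^h$ to $\mathbf{x}^{h+s}$, which vanishes in $T$ exactly when $h+s\in S$; thus $\ann(\mathbf{x}^h)=\mathbf{m}$ says precisely that $h$ is a hole with $h+s\in S$ for every $0\neq s\in S$. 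Calling such an $h$ an \emph{isolated hole}, I obtain the decomposition-free statement $\depth\kk[S]\geq 2$ if and only if $S$ has no isolated hole.

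It then remains to match isolated holes with the dimensions of families in the decomposition of Theorem~\ref{holedecomp}. The easy direction is a direct computation: if every family $s_i+\ZZ F_i$ has $\dim F_i\geq 1$, then any hole $h$ lies in some such family, and choosing a nonzero $f\in F_i\subseteq S$ gives $h+nf\in(s_i+\ZZ F_i)\cap\RR_{\geq 0}S$, a hole, for all $n\geq 0$; hence $h+f\in h+S$ is again a hole and $h$ is not isolated, so $H^0_{\mathbf{m}}(T)=0$ and $\depth\kk[S]\geq 2$. The same computation shows that an isolated hole can never sit inside a family of positive dimension, so it is forced to appear as a $0$-dimensional family in any hole decomposition, which yields the reverse implication.

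The main obstacle is exactly this last matching, because the decomposition in Theorem~\ref{holedecomp} is not unique: a singleton family $\{s_i\}$ produced by that theorem need not correspond to a genuinely isolated hole, since $s_i$ could simultaneously be covered by a higher-dimensional family. My way around this is to keep the decomposition-independent notion of isolated hole as the true pivot — it is exactly $H^0_{\mathbf{m}}(T)$ on the algebraic side and exactly the obstruction to an all-positive-dimensional decomposition on the combinatorial side — and to read ``every family of holes has dimension at least $1$'' as the assertion that no isolated hole exists. The delicate point I would still need to verify carefully is that the absence of isolated holes permits one to refine the decomposition of Theorem~\ref{holedecomp} so that all of its families have dimension $\geq 1$, thereby reconciling the fixed-decomposition phrasing of the statement with the intrinsic condition $H^0_{\mathbf{m}}(T)=0$.
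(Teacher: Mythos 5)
A preliminary remark: the paper itself contains no proof of Theorem \ref{S2} --- it is quoted directly from Katth\"an's paper --- so your proposal can only be measured against that source, not against anything internal to this paper. Within your proposal, the algebraic half is correct and complete: the exact sequence $0 \to \kk[S] \to \kk[\overline{S}] \to T \to 0$, Hochster's theorem for the normalization, and the resulting isomorphism $H^1_{\mathbf{m}}(\kk[S]) \cong H^0_{\mathbf{m}}(T)$ (this is precisely where $d \geq 2$ enters) correctly reduce the theorem to the statement that $H^0_{\mathbf{m}}(T)=0$ if and only if no hole $h$ satisfies $h+s \in S$ for all $0 \neq s \in S$. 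Your argument that a decomposition with all families of dimension at least $1$ rules out isolated holes, hence forces $\depth \kk[S] \geq 2$, is also complete.

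The genuine gap is the converse matching, exactly where you flagged it: from ``no isolated holes'' you must produce a decomposition as in Theorem \ref{holedecomp} all of whose families are positive-dimensional, and nothing in your text does this. Your observation that an isolated hole cannot lie in a positive-dimensional family is not enough, because a $0$-dimensional family $\{s_i\}$ may consist of a non-isolated hole (indeed one may always adjoin redundant singleton families to a valid decomposition, which also shows the equivalence must be read existentially over decompositions). The missing idea is module-theoretic and is the actual content of Katth\"an's structural results, which the bare statement of Theorem \ref{holedecomp} does not hand you: since $T$ is $\ZZ^d$-graded, its associated primes are face primes $\mathfrak{p}_F$, and ``no isolated holes'' says precisely $\mathbf{m} \notin \Ass(T)$. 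Then for any hole $h$, each minimal prime over $\ann_T(\mathbf{x}^h)$ is an associated prime of the cyclic submodule $\kk[S]\mathbf{x}^h \subseteq T$, hence equals $\mathfrak{p}_F$ for some face $F$ with $\dim F \geq 1$; the containment $\ann_T(\mathbf{x}^h) \subseteq \mathfrak{p}_F$ says that $h+F$ consists of holes, and then $(h+\ZZ F)\cap \RR_{\geq 0}S$ consists of holes as well (if $h+f'-f \in S$ with $f,f' \in F$, then $h+f' = (h+f'-f)+f \in S$, a contradiction). Replacing every singleton family $\{s_i\}$ of a given decomposition by such a set $(s_i+\ZZ F)\cap \RR_{\geq 0}S$ yields the required decomposition and closes the gap. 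Until an argument of this kind is supplied, your proof establishes the intrinsic criterion ($\depth \kk[S] \geq 2$ if and only if there is no isolated hole) but only one direction of the equivalence actually stated in the theorem.
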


\begin{Thm}[{see \cite[Corollary 3.2 and Corollary 3.5]{herzog2019trace}}]\label{trace}
  Let $S=\kk[x_1,\cdots,x_n]$ be a polynomial ring, let $\mathbf{n}=(x_1,\cdots,x_n)$ be the graded maximal ideal of $S$ and let
  \[\mathbb{F}:0 \rightarrow F_p \xrightarrow{\phi_p} F_{p-1} \rightarrow \cdots \rightarrow F_1 \rightarrow F_0 \rightarrow R \rightarrow 0\]
  be a graded minimal free $S$-resolution of the Cohen-Macaulay ring $R=S/J$ with
  $J \subseteq \mathbf{n}^2$.
  Let $I_1(\phi_p)$ be the ideal of $R$ generated by all components of a
  representation matrix of $\phi_p$.
  Then the following hold.

  \textit{$(a)$}  Let $e_1,\cdots,e_t$ be a basis of $F_p$.
  Suppose that for $i=1,\cdots,s$
  the elements $\sum_{j=1}^t r_{ij}e_j$ generate
  the kernel of
  \[\psi_p : F_p \otimes R \longrightarrow F_{p-1} \otimes R,\]
  where
  \[\psi_p=\phi_p \otimes R.\]
  Then $\tr(\omega_R)$ is generated by the elements $r_{ij}$ with $i=1,\cdots,s$ and $j=1,\cdots,t$.

  \textit{$(b)$} If $r(R)=2$ and
  $R$ is a domain,
  then $\tr(\omega_R)=I_1(\phi_p)$.
  \end{Thm}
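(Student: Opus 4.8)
The plan is to realize the canonical module as the cokernel of the dual of $\phi_p$ and then dualize once more. Write $p=\pd_S R$, which equals the codimension of $R$ since $R$ is Cohen--Macaulay. Because $R$ is Cohen--Macaulay, $\ext^i_S(R,S)=0$ for $i\neq p$, and by local duality $\omega_R\cong\ext^p_S(R,\omega_S)\cong\ext^p_S(R,S)$ up to a degree shift (recall $\omega_S=S(-n)$). Dualizing the minimal resolution $\mathbb F$ by applying $\Hom_S(-,S)$, the only nonvanishing cohomology is $\ext^p_S(R,S)=\cok(\phi_p^*\colon F_{p-1}^*\to F_p^*)$. Since the trace ideal is unchanged under degree shift, I may and do replace $\omega_R$ by $\omega_R':=\cok(\phi_p^*)$. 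As $J\,\omega_R'=0$, tensoring the presentation $F_{p-1}^*\xrightarrow{\phi_p^*}F_p^*\to\omega_R'\to 0$ with $R$ gives an $R$-presentation
\[
F_{p-1}^*\otimes R\xrightarrow{\ \phi_p^*\otimes R\ }F_p^*\otimes R\longrightarrow\omega_R'\longrightarrow 0 .
\]

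Next I would apply $\Hom_R(-,R)$ to this presentation. Using $(F_p^*\otimes R)^*\cong F_p\otimes R$ and $(F_{p-1}^*\otimes R)^*\cong F_{p-1}\otimes R$, together with $(\phi_p^*\otimes R)^*=\phi_p\otimes R=\psi_p$, left exactness of $\Hom_R(-,R)$ yields
\[
0\longrightarrow\Hom_R(\omega_R',R)\longrightarrow F_p\otimes R\xrightarrow{\ \psi_p\ }F_{p-1}\otimes R,
\]
so that $\Hom_R(\omega_R',R)=\ker\psi_p$. Tracking this identification through the evaluation pairing, an element $v_i=\sum_j r_{ij}e_j\in\ker\psi_p$ corresponds to the homomorphism $\varphi_i\colon\omega_R'\to R$ sending the generator $\overline{e_k^{\,*}}$ to $\langle e_k^{\,*},v_i\rangle=r_{ik}$; hence $\img\varphi_i=(r_{i1},\dots,r_{it})$. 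If the $v_i$ generate $\ker\psi_p$, then the $\varphi_i$ generate $\Hom_R(\omega_R',R)$, so every $\varphi$ in $\Hom_R(\omega_R',R)$ satisfies $\img\varphi\subseteq\sum_i\img\varphi_i$; therefore $\tr(\omega_R)=\tr(\omega_R')=\sum_i\img\varphi_i=(r_{ij}:1\le i\le s,\ 1\le j\le t)$, which proves (a).

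For (b) I would specialise to $t=\rank F_p=r(R)=2$ and exploit that $R$ is a domain. Writing the $m\times 2$ matrix of $\phi_p$ with rows $(a_k,b_k)$, so that $I_1(\phi_p)=(\bar a_k,\bar b_k:k)$ and $\psi_p(\alpha,\beta)=(\alpha\bar a_k+\beta\bar b_k)_k$, I first note that $\omega_R'$ has rank $1$ over the domain $R$, whence its presentation matrix $\phi_p^*\otimes R$ has rank $1$ over $Q(R)$; consequently all $2\times2$ minors $\bar a_k\bar b_l-\bar a_l\bar b_k$ vanish in $R$. This shows each cross relation $(\bar b_k,-\bar a_k)$ lies in $\ker\psi_p$, so the submodule $N:=\langle(\bar b_k,-\bar a_k):k\rangle$ satisfies $N\subseteq\ker\psi_p$, and already $I_1(\phi_p)\subseteq\tr(\omega_R')$ by part (a).

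The \emph{crux} is the reverse inclusion, i.e.\ that the cross relations generate all of $\ker\psi_p$. Here I would use the linear automorphism $\sigma$ of $F_p\otimes R\cong R^2$ given by $\sigma(x,y)=(y,-x)$. Since the first syzygy module $\operatorname{Syz}_1(\omega_R')=\img(\phi_p^*\otimes R)=\langle(\bar a_k,\bar b_k):k\rangle$ is carried isomorphically onto $N$ by $\sigma$, I obtain $R^2/N\cong R^2/\operatorname{Syz}_1(\omega_R')=\omega_R'$. Now $\ker\psi_p/N$ is a submodule of $R^2/N\cong\omega_R'$, which is torsion-free because $R$ is a domain; on the other hand $N$ and $\ker\psi_p=\Hom_R(\omega_R',R)$ both have rank $1$, so $\ker\psi_p/N$ is torsion. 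A torsion submodule of a torsion-free module is zero, whence $\ker\psi_p=N$. Applying part (a) to the generating set $\{(\bar b_k,-\bar a_k)\}$ of $\ker\psi_p$ then gives $\tr(\omega_R)=(\bar a_k,\bar b_k:k)=I_1(\phi_p)$. I expect the main obstacle to be exactly this last identification of the abstract kernel $\ker\psi_p$ with the concrete cross relations, which the rank-one/torsion-free dichotomy is designed to resolve.
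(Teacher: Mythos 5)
Your proposal is correct, and since the paper states this theorem only as a quotation from \cite{herzog2019trace} without reproducing a proof, the relevant comparison is with that source: your argument is essentially the original one. Part (a) is precisely the Herzog--Hibi--Stamate derivation (dualize the minimal resolution to obtain a presentation of $\omega_R$ up to shift, identify $\Hom_R(\omega_R,R)$ with $\ker\psi_p$, and read off $\tr(\omega_R)$ from the components of its generators), while your part (b) --- vanishing of the $2\times 2$ minors because $\omega_R$ has rank one over the domain $R$, together with the identification of $\ker\psi_p$ with the module of cross relations via the torsion-free quotient $R^2/N\cong\omega_R$ --- recovers their Corollary 3.5, which they in fact establish under the slightly weaker hypothesis that $R$ is generically Gorenstein; the graded-versus-ungraded $\Hom$ discrepancy is harmless here since $\Hom_R(M,R)$ is a graded module for finitely generated graded $M$, so both conventions yield the same trace ideal.
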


We state the necessary results about the minimal free resolution
of the codimension 2 lattice ideal based on \cite{peeva1998syzygies}.

\begin{Definition}
Let $S=\kk[x_1,\cdots,x_n]$ be a polynomial ring and let
$L$ be any sublattice of $\mathbb{Z}^n$.
We put
$\bfx^{\bfa} := {x_1}^{a_1}{x_2}^{a_2}\cdots {x_n}^{a_n}$
where $\bfa=(a_1,a_2, \cdots, a_n) \in \mathbb{N}^n.$
Then its associated \textit{lattice ideal} in $S$ is
\[I_L := (\bfx^\bfa-\bfx^\bfb : \bfa,\bfb \in \mathbb{N}^n \;\;\textit{and} \;\;\bfa-\bfb \in L ).\]
Prime lattice ideals are called \textit{toric ideals}.
Prime binomial ideals and toric ideals are identical ({see \cite[Theorem 7.4]{miller2005combinatorial}}).
\end{Definition}

\begin{Proposition}[{see \cite[Comments 5.9 (a) and Theorem 6.1 (ii)]{peeva1998syzygies}}]\label{codim2peeva}
Let $S=\kk[x_1,\cdots,x_n]$ be a polynomial ring.
If $I$ is a codimension $2$ lattice ideal of $S$
and the number of minimal generators of $I$
is 3, then
$R=S/I$ is Cohen-Macaulay and
the graded minimal free resolution of $R$ is the following form.
\[0\rightarrow S^{2} \xrightarrow {\left[
\begin{array}{cc}
u_1 & u_4 \\
u_2 & u_5 \\
u_3 & u_6
\end{array}
\right]
}
S^{3} \rightarrow
 S \rightarrow R \rightarrow 0,\]
where $u_i$ is a monomial of $S$ for all $1 \leqq i \leqq 6$.
\end{Proposition}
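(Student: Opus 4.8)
The plan is to prove Proposition~\ref{codim2peeva}, which asserts that a codimension~$2$ lattice ideal $I$ minimally generated by three elements yields a Cohen--Macaulay quotient $R=S/I$ whose minimal free resolution has the displayed Hilbert--Burch shape with monomial entries. I would organize the argument around the Hilbert--Burch theorem together with the binomial/lattice structure of $I$.

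\medskip

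First I would establish that $R$ is Cohen--Macaulay. Since $I$ is a lattice ideal, it is unmixed of codimension $2$, so $\operatorname{grade}(I)=2=\operatorname{pd}(R)$ would follow once we know the projective dimension is exactly $2$; equivalently, by the Auslander--Buchsbaum formula, $\operatorname{depth} R = n-\operatorname{pd} R = n-2 = \dim R$. I would cite \cite[Theorem 6.1 (ii)]{peeva1998syzygies}, which handles precisely the three-generated codimension $2$ lattice-ideal case and provides the resolution length. The key point is that for a perfect ideal of grade $2$, the minimal free resolution has length exactly $2$, so it has the form
\[
0 \rightarrow S^{t} \xrightarrow{\phi} S^{3} \rightarrow S \rightarrow R \rightarrow 0.
\]

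\medskip

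Next I would pin down the rank $t$ of the first syzygy module. By the Hilbert--Burch theorem, a grade-$2$ perfect ideal with $3$ minimal generators has a free resolution in which the syzygy module $S^t$ satisfies $t=3-1=2$, and the three generators are, up to a unit and sign, the $2\times 2$ maximal minors of the $3\times 2$ matrix $\phi$. This gives the shape $0\to S^2\xrightarrow{\phi} S^3\to S\to R\to 0$ with $\phi$ represented by a $3\times 2$ matrix
\[
\begin{bmatrix} u_1 & u_4\\ u_2 & u_5\\ u_3 & u_6\end{bmatrix}.
\]
To see the entries $u_i$ are monomials, I would invoke the combinatorial structure from \cite[Comments 5.9 (a)]{peeva1998syzygies}: for lattice ideals the minimal resolution is $\ZZ^n$-graded (the lattice grading), so each entry $u_i$ is homogeneous of a single multidegree, i.e.\ a scalar multiple of a single monomial; minimality forces the entries to lie in $\mathbf{n}$, and after the standard change of basis normalizing scalars one may take each $u_i$ to be an honest monomial $\bfx^{\bfa}$.

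\medskip

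The main obstacle I expect is the last step: showing the entries are genuinely \emph{monomials} rather than general binomials. A priori, generators and syzygies of a lattice ideal are binomials, so one must explain why the syzygy matrix can be taken monomial. The resolution here is with respect to the fine $\ZZ^n$-grading coming from the lattice $L$, in which $S$ is graded and $I_L$ is a graded ideal whose graded pieces are one-dimensional over $\kk$ in each relevant multidegree; hence every map between the multigraded free modules is given in each coordinate by a monomial times a scalar. This is exactly the content distilled in \cite[Comments 5.9 (a) and Theorem 6.1 (ii)]{peeva1998syzygies}, and I would lean on it directly rather than reconstruct the Peeva--Sturmfels syzygy analysis, since the cited results already package the monomiality of $\phi$ in the three-generated case. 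Assembling these pieces gives precisely the claimed resolution.
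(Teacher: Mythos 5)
You should first be aware that the paper offers no proof of this proposition at all: it is stated in the Preliminaries as a quoted result, and the bracketed reference to Peeva--Sturmfels (Comments 5.9(a) and Theorem 6.1(ii)) is the entire justification. Since your argument ultimately defers to exactly those two results --- for the Cohen--Macaulayness, for the length of the resolution, and (in your final paragraph) for the monomiality of the syzygy matrix --- your proposal is in substance the same as the paper's treatment, with the Hilbert--Burch theorem added as correct, standard packaging for the shape $0\rightarrow S^2\rightarrow S^3\rightarrow S\rightarrow R\rightarrow 0$ once perfection of grade $2$ is granted.

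However, the one step where you attempt independent mathematics --- the claim that monomiality of the entries follows from multigrading --- is false, and you should strike it even as a heuristic. A lattice ideal $I_L$ is not $\ZZ^n$-graded (only monomial ideals are); it is graded by $\ZZ^n/L$, and in that grading the homogeneous component of $S$ of degree $c+L$ is spanned by \emph{all} monomials $\bfx^\bfa$ with $\bfa\in(c+L)\cap\NN^n$, so it typically has dimension larger than $1$; this is exactly why $I_L$ is generated by binomials rather than monomials (a binomial $\bfx^\bfa-\bfx^\bfb$ is homogeneous precisely when $\bfa-\bfb\in L$). Your assertion that ``every map between the multigraded free modules is given in each coordinate by a monomial times a scalar'' is contradicted by the very resolution in the statement: the first differential $S^3\rightarrow S$ has entries $f_1=u_1u_5-u_2u_4$, $f_2=u_3u_4-u_1u_6$, $f_3=u_2u_6-u_3u_5$, which are binomials. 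If your grading argument were valid, it would prove that every codimension $2$ lattice ideal is a monomial ideal. The monomiality of the \emph{second} differential is a genuinely nontrivial structural theorem of Peeva--Sturmfels (their determinantal description of three-generated codimension $2$ lattice ideals); it cannot be recovered formally from any grading, nor from the $\ZZ^n$-graded resolution of the lattice module $M_L$, since descending from $M_L$ to $S/I_L$ identifies generators in the same $L$-orbit and thereby converts pairs of monomial entries into binomial entries. So the correct reading of your proposal is the one you give in your last sentence: cite Peeva--Sturmfels for the monomial Hilbert--Burch matrix, exactly as the paper does, and delete the grading justification.
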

Note that a codimension 2 toric ideal $I$ is Cohen-Macaulay
but not Gorenstein if and only if the number of
minimal generators of $I$ is 3 ({see \cite[Remark 5.8 and Theorem 6.1]{peeva1998syzygies}}).

\section{Nearly Gorenstein property versus level property \\on semi-standard graded affine semigroup rings}
In this section, we will establish the theory of nearly Gorenstein affine semigroup rings and generalize the results of \cite[Section 4]{miyashita2022levelness} to the case of semi-standard graded affine semigroup rings.
Moreover, we prove a statement concerning the trace ideal of affine semigroup rings with depth greater than or equal to 2. This result holds even in the case of non-Cohen-Macaulay rings (Theorem 3.5).
Proposition \ref{extremal} is the key to extend  
the results of \cite[Section 4]{miyashita2022levelness}
to our case.

Let $S$ be a semi-standard graded affine semigroup,
and let $G_S=\{ \bfa_1,\cdots,\bfa_s \} \subseteq \mathbb{N}^d$ be the minimal generators of $S$.
Fix the affine semigroup ring $R=\kk[S]$.
For any $\bfa \in \mathbb{N}^d$, we set
\begin{equation} \notag
R_\bfa
=
\begin{cases}
\kk \bfx^\bfa & (\bfa \in S)\\
0 & (\bfa \notin S).
\end{cases}
\end{equation}
Then $R = \bigoplus_{\bfa \in \mathbb{N}^d }R_\bfa$ is a direct sum decomposition as an abelian group,
and $R_\bfa R_\bfb \subseteq R_{\bfa+\bfb}$ for any $\bfa,\bfb \in \mathbb{N}^d$.
Thus we can regard $R$ as an $\mathbb{N}^d$-graded ring.
For an $\NN^d$-graded ideal $I$ of $R$, we put
$$V_I=\left \{\bfv_1,\cdots,\bfv_r : \{\bfx^\bfv_1,\cdots,\bfx^\bfv_r\} \text{\;is the minimal generating system of\;} I \right \} \subseteq S.$$
If $R$ is Cohen-Macaulay, then $\omega_R$ is isomorphic to an ideal $I_R \subseteq R$ and denote $V_{\omega_R}$ as $V_{I_R}$.

Moreover, we use the following notations;

$({G_S})_{\min}
=
\{\bfv \in G_S
: \deg \bfx^\bfv \leqq
\deg \bfx^{\bfv_i} \;\text{for all}
\; 1 \leqq i \leqq r\}$,

$(V_I)_{\min}
=
\{\bfv \in V_I
: \deg \bfx^\bfv \leqq
\deg \bfx^{\bfv_i} \;\text{for all}
\; 1 \leqq i \leqq r\}$,

$S-V_I
= \left \{\bfu \in \ZZ S : \bfx^{\bfu} \in I^{-1} \right \}
= \{ \bfu \in \mathbb{Z}S : \bfu + \bfv \in S \; \text{for all}\; \bfv \in V_I\},$

$E_S=\{\bfa \in G_S : \NN \bfa \text{\;is a $1$-dimensional face of $S$}\}$.

Thus the following holds.

\begin{Proposition}\label{NGtokuchouAffine}
  Let $S$ be a pointed affine semigroup, let $R=\kk[S]$ and let $\bfa \in G_S$.
  The following are equivalent:
  
  \textit{$(a)$} $\bfx^\bfa \in \tr(I)$;

  \textit{$(b)$} There exist $\bfv \in V_I$
  and $\bfu \in S-V_I$ such that
  $\bfa = \bfu + \bfv$.
\begin{proof}
$(b) \Rightarrow (a)$ follows from equality (2.\ref{traceformula}).
We show $(a) \Rightarrow (b)$.
Assume that $\bfx^\bfa \in \tr(I)$.
Then we know from equality (2.\ref{traceformula})
that ${\bfx}^{\bfa_i} \in II^{-1}$.
Since $II^{-1}$ is an $\mathbb{N}^d$-graded homogeneous ideal and $R$ is a domain,
we can write $\bfx^{\bfa}=c_1\bfx^{\bfv_1}\bfx^{\bfu_1}+\cdots+c_r\bfx^{\bfv_r}\bfx^{\bfu_r}$,
where $c_i \in \kk$ and $\bfu_i \in \mathbb{Z}^d$.
Thus we have $\bfx^{\bfv_i+\bfu_i}=\bfx^{\bfa}$
for at least one $1 \leqq i \leqq r$,
as desired.
\end{proof}
\end{Proposition}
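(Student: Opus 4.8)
The plan is to route everything through the trace formula $\tr(I)=II^{-1}$ recorded in~(2.\ref{traceformula}) and to exploit that $R=\kk[S]$ is an $\NN^d$-graded \emph{domain} whose homogeneous components $R_\bfa$ are at most one-dimensional, each spanned by the monomial $\bfx^\bfa$. Throughout I treat $I$ as a nonzero $\NN^d$-graded ideal, so that it contains a non-zerodivisor and the trace formula applies.

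For $(b)\Rightarrow(a)$ I would argue directly. If $\bfa=\bfu+\bfv$ with $\bfv\in V_I$ and $\bfu\in S-V_I$, then $\bfx^\bfv$ is a minimal generator of $I$, so $\bfx^\bfv\in I$, while the very description $S-V_I=\{\bfu\in\ZZ S:\bfx^\bfu\in I^{-1}\}$ gives $\bfx^\bfu\in I^{-1}$. Hence $\bfx^\bfa=\bfx^\bfv\bfx^\bfu\in II^{-1}=\tr(I)$.

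For the reverse implication $(a)\Rightarrow(b)$ I would start from $\bfx^\bfa\in\tr(I)=II^{-1}$. Writing $I=(\bfx^{\bfv_1},\dots,\bfx^{\bfv_r})$, every element of $II^{-1}$ has the form $\sum_{i=1}^r\bfx^{\bfv_i}y_i$ with $y_i\in I^{-1}$, since $I^{-1}$ is an $R$-module. As $II^{-1}$ is $\NN^d$-graded, I would pass to homogeneous components and arrange that each $y_i$ is homogeneous of degree $\bfa-\bfv_i$; because $R$ is a monomially graded domain, such a $y_i$ is a scalar multiple of a single monomial, say $y_i=c_i\bfx^{\bfu_i}$ with $c_i\in\kk$ and $\bfu_i=\bfa-\bfv_i\in\ZZ S$, and the requirement $\bfx^{\bfu_i}\in I^{-1}$ is precisely $\bfu_i\in S-V_I$. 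This presents $\bfx^\bfa=\sum_i c_i\bfx^{\bfv_i+\bfu_i}$ as a $\kk$-combination of monomials all sitting in degree $\bfa$; since $R_\bfa=\kk\bfx^\bfa$ is one-dimensional, any surviving term forces $\bfv_i+\bfu_i=\bfa$, which is exactly the decomposition demanded in $(b)$.

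The step I expect to require the most care is the monomial reduction in the backward direction: I must justify that a homogeneous component of $I^{-1}\subseteq Q(R)$ is spanned by a single monomial $\bfx^\bfu$ with $\bfu\in\ZZ S$, and that $\bfx^\bfu\in I^{-1}$ is equivalent to $\bfu+\bfv\in S$ for all $\bfv\in V_I$, i.e.\ $\bfu\in S-V_I$. This is where the domain hypothesis and the one-dimensionality of the graded pieces of an affine semigroup ring do the real work; once they are in place, comparing degrees collapses the sum to a single product and the conclusion is immediate.
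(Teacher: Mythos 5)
Your proposal is correct and takes essentially the same route as the paper's proof: both directions run through the trace formula $\tr(I)=II^{-1}$ of (2.\ref{traceformula}), and the implication $(a)\Rightarrow(b)$ uses the $\mathbb{Z}^d$-grading together with the domain property of $R=\kk[S]$ to write $\bfx^\bfa$ as a sum of terms $c_i\bfx^{\bfv_i}\bfx^{\bfu_i}$ and then collapse it, by comparing degrees, to a single product $\bfx^{\bfv_i+\bfu_i}=\bfx^\bfa$ with $\bfu_i\in S-V_I$. If anything, your write-up is more explicit than the paper's, which leaves the gradedness of $I^{-1}$ and the identification of its homogeneous monomials with $S-V_I$ implicit.
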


\begin{Corollary}\label{NGsemigroup}
  Let $S$ be a Cohen-Macaulay
  pointed affine semigroup.
  The following are equivalent:  
  \textit{$(a)$} $R=\kk[S]$ is nearly Gorenstein;

  \textit{$(b)$} For any $\bfa \in G_S$,
  there exist $\bfv \in V_{\omega_R}$
  and $\bfu \in S-V_{\omega_R}$ such that
  $\bfa = \bfu + \bfv$.
\end{Corollary}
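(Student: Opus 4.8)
The plan is to reduce the assertion to Proposition \ref{NGtokuchouAffine} applied to the canonical ideal. Since $S$ is pointed, $R=\kk[S]$ is a domain (it embeds into the group algebra $\kk[\ZZ S]$), so by the discussion recalling that a Cohen--Macaulay $R$ has $\omega_R$ isomorphic, up to degree shift, to an $\NN^d$-graded ideal $I_R\subseteq R$, we may work with $I_R$ and use the notations $V_{\omega_R}=V_{I_R}$ and $S-V_{\omega_R}$. Note that $I_R$ contains a nonzerodivisor (any nonzero element, as $R$ is a domain and $I_R\neq 0$), so the trace formula $\tr(I)=II^{-1}$ underlying Proposition \ref{NGtokuchouAffine} is available.

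First I would recast nearly Gorensteinness as a statement about $\mathbf{m}$: by definition $R$ is nearly Gorenstein exactly when $\mathbf{m}\subseteq \tr(\omega_R)$. Here $\mathbf{m}=\bigoplus_{i>0}R_i$ is the graded maximal ideal, and because every element of $S\setminus\{0\}$ is a sum of elements of the unique minimal generating set $G_S$ while each $\bfa\in G_S$ is indecomposable, the monomials $\{\bfx^\bfa : \bfa\in G_S\}$ form the minimal monomial generating system of $\mathbf{m}$. Since $\tr(\omega_R)$ is itself an $\NN^d$-graded ideal, the containment $\mathbf{m}\subseteq\tr(\omega_R)$ holds if and only if each generator lies in $\tr(\omega_R)$, that is, $\bfx^\bfa\in\tr(\omega_R)$ for every $\bfa\in G_S$; the forward implication is immediate and the reverse follows because any ideal containing all the generators of $\mathbf{m}$ contains $\mathbf{m}$.

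Finally I would apply Proposition \ref{NGtokuchouAffine} with $I=I_R\cong\omega_R$ to each fixed $\bfa\in G_S$: the membership $\bfx^\bfa\in\tr(\omega_R)$ is equivalent to the existence of $\bfv\in V_{\omega_R}$ and $\bfu\in S-V_{\omega_R}$ with $\bfa=\bfu+\bfv$. Quantifying over all $\bfa\in G_S$ then turns the condition ``$\bfx^\bfa\in\tr(\omega_R)$ for every $\bfa\in G_S$'' into precisely statement $(b)$, closing the chain $(a)\Leftrightarrow\mathbf{m}\subseteq\tr(\omega_R)\Leftrightarrow(b)$. I do not expect a serious obstacle: essentially all the work is packaged into Proposition \ref{NGtokuchouAffine}, and the only point requiring care is the elementary reduction of the ideal containment $\mathbf{m}\subseteq\tr(\omega_R)$ to its generators $\{\bfx^\bfa:\bfa\in G_S\}$, which is where the identification of $G_S$ with the minimal generators of $\mathbf{m}$ is used.
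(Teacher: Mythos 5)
Your proof is correct and matches the paper's approach: the paper gives no separate argument for this corollary precisely because it follows from Proposition \ref{NGtokuchouAffine} applied to the canonical ideal $I_R\cong\omega_R$, combined with the observation that nearly Gorensteinness $\mathbf{m}\subseteq\tr(\omega_R)$ reduces, by gradedness, to membership $\bfx^\bfa\in\tr(\omega_R)$ for each generator $\bfa\in G_S$. Your only additions (that $I_R$ contains a nonzerodivisor since $R$ is a domain, and that $\{\bfx^\bfa:\bfa\in G_S\}$ minimally generates $\mathbf{m}$) are exactly the routine checks the paper leaves implicit.
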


\begin{Lemma}
  Let $S$ be a
  pointed affine semigroup and let $\bfa \in ({G_S})_{\min}$.
  If there exist $\bfv \in V_I$
  and $\bfu \in S-V_I$ such that
  $\bfa = \bfu + \bfv$,
  then $\bfv \in (V_I)_{\min}$.
\begin{proof}
If  $(V_I)_{\min}=V_I$, it is obvious.
Assume $(V_I)_{\min} \neq V_I$ and $\bfv \notin (V_I)_{\min}$.
Then there exists $\bfv' \in (V_I)_\min$
such that $\deg \bfx^{\bfv'-\bfv} <0$.
Since $\bfu \in S-V_I$,
Thus we get
$\bfa + \bfv'-\bfv=\bfu + \bfv' \in S \setminus \{0\}$.
Then $\deg \bfx^{\bfa + \bfv'-\bfv} \geqq \deg \bfx^{\bfa}$.
Therefore, 
we have $\deg \bfx^{\bfv'-\bfv}=\deg \bfx^{\bfa+\bfv'-\bfv}-\deg{ \bfx^{\bfa} } \geqq 0$,
which yields a contradiction.
\end{proof}
\end{Lemma}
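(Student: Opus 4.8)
The plan is to argue by contradiction, using that the $\NN$-grading on $R=\kk[S]$ is additive and, because $R_0=\kk$, strictly positive on $S\setminus\{0\}$. First I would record the elementary observation that, for $\bfa\in(G_S)_{\min}$, every nonzero $\bfb\in S$ satisfies $\deg\bfx^{\bfb}\geqq\deg\bfx^{\bfa}$: since such a $\bfb$ is a nonnegative integer combination of the minimal generators $G_S$ with at least one positive coefficient, additivity of the degree together with the defining property of $(G_S)_{\min}$ (every generator has degree at least $\deg\bfx^{\bfa}$) forces $\deg\bfx^{\bfb}\geqq\deg\bfx^{\bfa}$. Equivalently, any element of $S$ whose degree is strictly less than $\deg\bfx^{\bfa}$ must equal $0$.

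Next, suppose toward a contradiction that $\bfv\notin(V_I)_{\min}$, and fix some $\bfv'\in(V_I)_{\min}$, so that $\deg\bfx^{\bfv'}<\deg\bfx^{\bfv}$. Since $\bfu\in S-V_I$ and $\bfv'\in V_I$, the element $\bfu+\bfv'$ lies in $S$; moreover, using $\bfa=\bfu+\bfv$ I can rewrite it as $\bfu+\bfv'=\bfa+(\bfv'-\bfv)$, whence by additivity of the degree $\deg\bfx^{\bfu+\bfv'}=\deg\bfx^{\bfa}+\deg\bfx^{\bfv'}-\deg\bfx^{\bfv}<\deg\bfx^{\bfa}$. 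By the observation of the first paragraph this forces $\bfu+\bfv'=0$, and therefore $\bfv=\bfa+\bfv'$.

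To close the argument I would invoke the minimality of the generating system of $I$. From $\bfv=\bfa+\bfv'$ we obtain $\bfx^{\bfv}=\bfx^{\bfa}\,\bfx^{\bfv'}$, and since $\bfa$ is a minimal generator of $S$ it is nonzero, so $\bfx^{\bfa}$ is a nonunit of $R$. Hence $\bfx^{\bfv}$ lies in the ideal $(\bfx^{\bfv'})$ generated by another element of $\{\bfx^{\bfv_1},\dots,\bfx^{\bfv_r}\}$, making $\bfx^{\bfv}$ redundant and contradicting the minimality of $V_I$ as a generating set of $I$. I expect the genuinely subtle point to be that the degree computation does not by itself produce a contradiction: it only places $\bfu+\bfv'$ strictly below the minimal generator degree, which, rather than being impossible, forces $\bfu+\bfv'=0$. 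The decisive input is then the minimality of the generators of $I$, which the resulting relation $\bfv=\bfa+\bfv'$ violates; recognizing that one cannot finish with the positivity of the grading alone, but must appeal to this minimality, is the main obstacle.
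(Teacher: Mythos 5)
Your proof is correct, and its core is the same degree-counting argument as the paper's: rewrite $\bfu+\bfv'=\bfa+(\bfv'-\bfv)$ and play its degree off against the fact that $\deg\bfx^{\bfa}$ is minimal among degrees of nonzero elements of $S$. The difference lies in how the contradiction is closed. The paper asserts outright that $\bfu+\bfv'\in S\setminus\{0\}$ and then contradicts the resulting inequality $\deg\bfx^{\bfu+\bfv'}\geqq\deg\bfx^{\bfa}$; it never explains why $\bfu+\bfv'$ cannot equal $0$. You correctly observe that the degree computation alone cannot rule this out --- it only forces $\bfu+\bfv'=0$, i.e.\ $\bfv=\bfa+\bfv'$ --- and you then obtain the contradiction from the minimality of the monomial generating system of $I$: the relation $\bfx^{\bfv}=\bfx^{\bfa}\bfx^{\bfv'}$ with $\bfx^{\bfa}$ a nonunit makes $\bfx^{\bfv}$ redundant in $V_I$. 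So your argument is in effect a completed version of the paper's proof: what you single out as the subtle point is precisely the step the paper leaves unjustified, and the appeal to minimality of $V_I$ is exactly what fills that gap. The only cost is that your proof is slightly longer; what it buys is that every case is genuinely handled.
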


\begin{Proposition}\label{extremal}
Let $S$ be a pointed affine semigroup.
If $R=\kk[S]$ is semi-standard graded, then $\{\bfx^\bfe : \bfe \in E_S\} \subseteq R_1$.
\begin{proof}
Take $\bfe \in E_S$.
Since $\bfx^\bfe \in R$ and $R$ is finitely generated $\kk[R_1]$-module,
we can write $\bfe=\bfa+\bfb$ for some $\bfa \in G_S$ and $\bfb \in S$ with $\deg \bfx^\bfa=1$.
Then $F=\NN \bfe$ is a face of $S$, so
$$\bfe=\bfa+\bfb \in F
\iff \bfa \in F \text{\;and\;} \bfb \in F.$$
Thus we get $\bfb=\mathbf{0}$ and $\bfx^\bfe=\bfx^\bfa \in R_1$, as desired.
\end{proof}
  \end{Proposition}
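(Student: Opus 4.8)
The plan is to prove the equivalent, slightly more concrete statement that $\deg \bfx^\bfe = 1$, that is $\bfx^\bfe \in R_1$, for every $\bfe \in E_S$ (the containment then follows, since these are exactly the monomials generating the ideal in question). The guiding idea is geometric: semi-standard gradedness should force the cone $C = \RR_{\geqq 0}S$ to be generated already by the degree-one elements of $S$, and an extremal ray of a cone generated by a finite set must be spanned by one of those generators.

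First I would record the degree-one data. Put $A = \kk[R_1]$, let $D = \{\bfa \in S : \deg \bfx^\bfa = 1\}$, and let $T = \langle D \rangle$ be the subsemigroup it generates, so that $A = \bigoplus_{\bfa \in T}\kk\,\bfx^\bfa$. Here $D$ is finite: a degree-one element of $S$ cannot be written as a sum of two nonzero elements (each would have degree $\geqq 1$), so $D \subseteq G_S$. By hypothesis $R$ is a finitely generated $A$-module; since the fine $\NN^d$-grading has one-dimensional pieces, homogeneous generators are monomials $\bfx^{\bfv_1}, \dots, \bfx^{\bfv_t}$ with $\bfv_i \in S$, which is exactly the set-theoretic decomposition $S = \bigcup_{i=1}^{t}(T + \bfv_i)$.

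The crux is to upgrade this module decomposition to the cone identity $C = \RR_{\geqq 0}T$. For $\bfb \in S$ and $n \in \NN$ one has $n\bfb \in S$, hence $n\bfb = \bfu_n + \bfv_{i_n}$ with $\bfu_n \in T$; dividing by $n$ gives $\bfb = \bfu_n/n + \bfv_{i_n}/n$, where $\bfu_n/n \in \RR_{\geqq 0}T$ while $\bfv_{i_n}/n \to 0$ as $n \to \infty$ because only finitely many $\bfv_i$ occur. As $\RR_{\geqq 0}T$ is a closed (finitely generated) cone, passing to the limit places $\bfb$ in it; thus $C = \RR_{\geqq 0}S = \RR_{\geqq 0}T = \RR_{\geqq 0}D$. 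Now fix $\bfe \in E_S$: its ray $\RR_{\geqq 0}\bfe$ is an extremal ray of $C = \RR_{\geqq 0}D$, so it is spanned by some $\bfa \in D$. By the defining face property of $E_S$ the lattice points of $S$ on this ray are exactly $\NN\bfe$, so $\bfa = k\bfe$ with $k \geqq 1$; comparing degrees, $1 = \deg\bfx^\bfa = k\,\deg\bfx^\bfe \geqq \deg\bfx^\bfe \geqq 1$, forcing $k = 1$ and $\bfx^\bfe = \bfx^\bfa \in R_1$.

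I expect the only real obstacle to be the cone identity $C = \RR_{\geqq 0}T$. Module finiteness by itself only yields a factorization $\bfe = \bfu + \bfv_i$ with possibly $\bfu = \mathbf{0}$, i.e. with $\bfe$ itself a module generator; and for a minimal generator of degree $\geqq 2$ there is genuinely no factorization $\bfe = \bfa + \bfb$ with $\deg\bfx^\bfa = 1$ and $\bfb \in S$. It is precisely the passage to the cone that excludes this possibility for extremal $\bfe$. Once $C = \RR_{\geqq 0}D$ is in hand, the remaining combinatorics, namely that an extremal ray contains a generator and that on-ray lattice points are multiples of $\bfe$, are routine.
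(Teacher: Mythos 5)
Your proof is correct, and it takes a genuinely different route from the paper's --- in fact a more careful one. The paper's entire argument is: since $R$ is a finitely generated $\kk[R_1]$-module, write $\bfe = \bfa + \bfb$ with $\bfa \in G_S$, $\deg \bfx^{\bfa} = 1$ and $\bfb \in S$; then, $\NN\bfe$ being a face, $\bfe = \bfa + \bfb \in \NN\bfe$ forces $\bfa \in \NN\bfe$ and $\bfb \in \NN\bfe$, hence $\bfb = \mathbf{0}$ and $\bfx^{\bfe} = \bfx^{\bfa} \in R_1$. So the finishing move is identical to yours (a degree-one element of $S$ lying on the face $\NN\bfe$ must be $\bfe$ itself), but the paper never passes to the cone: the opening factorization is simply asserted from module-finiteness. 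As you point out in your closing paragraph, that assertion is exactly the delicate point: module-finiteness only gives $S = \bigcup_i (T + \bfv_i)$, and an element that is itself a minimal module generator of degree at least $2$ admits no factorization with a degree-one summand --- for instance $st^2$ in the semigroup of $\QQ[t,s^2t,s^4t,s^6t,s^8t,st^2,s^3t^2]$ from Examples~\ref{exNG1}(a). Read literally, then, the paper's first step is justified only by the proposition being proved, and what repairs it is either your cone identity $C = \RR_{\geqq 0}D$ (after which extremality forces a degree-one generator onto the ray $\RR_{\geqq 0}\bfe$), or, closer to the paper's line, applying the factorization to $n\bfe$ for $n \gg 0$ --- where it is valid because the module generators $\bfv_i$ have bounded degree --- and then descending to $\bfe$ via the face property and a degree count. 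What the paper's route buys is brevity; what yours buys is an argument in which extremality, and not mere membership in $S$, is what produces the degree-one summand. The convexity facts you leave as routine (a finitely generated cone is closed; an extremal ray of $\RR_{\geqq 0}D$ contains some element of $D$; the semigroup face $\NN\bfe$ corresponds to the cone face $\RR_{\geqq 0}\bfe$) are standard and unproblematic.
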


\begin{Thm}\label{TraceIdealAffine}
Let $R=\kk[S]$ be a
semi-standard graded affine semigroup ring.
Let $I$ be a non-principle ideal of $R$
and let $b=\min \{i: I_i\neq 0\}$.
If $\depth R \geqq 2$ and
$$(\bfx^\bfe : \bfe \in E_S)R \subseteq \tr(I),$$
then we have $\dim_\kk I_b \geqq 2$.
  \end{Thm}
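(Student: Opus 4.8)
The plan is to argue by contradiction: suppose $\dim_\kk I_b = 1$. Since $b=\min\{i:I_i\neq 0\}$ and $I$ is $\NN^d$-graded, every monomial of degree $b$ in $I$ is a minimal generator, so $\dim_\kk I_b=|(V_I)_{\min}|$ and the assumption says $(V_I)_{\min}=\{\bfv\}$ is a single element. Because $I$ is non-principal we have $|V_I|\geqq 2$, so there is a further generator $\mathbf{w}\in V_I$ with $\deg \bfx^{\mathbf{w}}>b$. The whole goal is then to exhibit a single lattice point that is forced simultaneously into $S$ (via the trace hypothesis) and into $\overline{S}\setminus S$ (via the depth hypothesis).

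First I would extract the arithmetic content of the trace condition. For each $\bfe\in E_S$ we have $\bfx^\bfe\in\tr(I)$ by hypothesis and $\deg\bfx^\bfe=1$ by Proposition \ref{extremal}, so $\bfe\in (G_S)_{\min}$ (degree $1$ being minimal among generator degrees). Proposition \ref{NGtokuchouAffine} then yields $\bfv_\bfe\in V_I$ and $\bfu_\bfe\in S-V_I$ with $\bfe=\bfu_\bfe+\bfv_\bfe$, and the lemma preceding Proposition \ref{extremal} forces $\bfv_\bfe\in (V_I)_{\min}=\{\bfv\}$. Hence $\bfe-\bfv\in S-V_I$ for every $\bfe\in E_S$; unwinding the definition of $S-V_I$ (applied to the generator $\mathbf{w}\in V_I$) gives
$$\bfe+(\mathbf{w}-\bfv)\in S \quad\text{for all } \bfe\in E_S.$$

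Next I would locate $\mathbf{w}-\bfv$. It lies in $\ZZ S$, and I claim it lies in $C=\RR_{\geqq 0}S$: if not, Lemma \ref{extremalpoly} produces an extremal ray whose generator $\bfe'\in E_S$ satisfies $(\mathbf{w}-\bfv)+\bfe'\notin C$, contradicting the displayed membership. So $\mathbf{w}-\bfv\in\overline{S}$. Moreover $\mathbf{w}-\bfv\notin S$, for otherwise $\bfx^{\mathbf{w}}=\bfx^\bfv\bfx^{\mathbf{w}-\bfv}$ would express $\bfx^{\mathbf{w}}$ as a multiple of $\bfx^\bfv$ by a positive-degree element, contradicting minimality of the generator $\mathbf{w}$. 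Thus $\mathbf{h}:=\mathbf{w}-\bfv$ is a hole. Now I bring in $\depth R\geqq 2$: by Theorem \ref{holedecomp} the hole $\mathbf{h}$ lies in some family $(\mathbf{h}+\ZZ F)\cap C\subseteq\overline{S}\setminus S$, and Theorem \ref{S2} guarantees $\dim F\geqq 1$. A positive-dimensional pointed face $F$ has an extremal ray, whose generator $\bfe'$ lies in $E_S\cap F$; then $\mathbf{h}+\bfe'\in(\mathbf{h}+\ZZ F)\cap C$, so $\mathbf{h}+\bfe'\notin S$, while the displayed membership with $\bfe=\bfe'$ gives $\mathbf{h}+\bfe'\in S$ — the desired contradiction.

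I expect the crux to be this final step: choosing $\bfe'$ \emph{inside} the face $F$ indexing the hole family is exactly what makes the depth hypothesis bite, since only $\dim F\geqq 1$ guarantees that $F$ contains an extremal generator of $S$, and it is the coincidence of $\bfe'+\mathbf{h}$ being at once an element of $S$ and a point of a positive-dimensional hole family that closes the argument. The one technical point requiring care is verifying that an extremal ray of the face $F$ is genuinely a one-dimensional face of $S$, so that its generator indeed lies in $E_S\cap F$; this follows from transitivity of the face relation between $F$, $S$ and the cone $C$.
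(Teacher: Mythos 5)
Your proof is correct and follows essentially the same route as the paper's own argument: assume $\dim_\kk I_b=1$ so that $(V_I)_{\min}=\{\bfv\}$, use the trace hypothesis together with Proposition \ref{NGtokuchouAffine}, Proposition \ref{extremal} and Lemma \ref{extremalpoly} to place the difference $\mathbf{w}-\bfv$ in $\overline{S}\setminus S$, and then invoke the hole decomposition (Theorem \ref{holedecomp}) and the depth criterion (Theorem \ref{S2}) to find an extremal generator $\bfe'\in E_S\cap F$ for which $\mathbf{h}+\bfe'$ must lie both in $S$ and outside $S$. If anything, your write-up is a bit more careful than the paper's (which asserts $\bfv'-\bfv\notin S$ without justification, applies the trace condition twice rather than extracting it once, and tacitly uses that a positive-dimensional face of $S$ contains a degree-one generator lying in $E_S$), but the underlying approach is identical.
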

\begin{proof}
Assume that $\depth R \geqq 2$, $(\bfx^\bfe : \bfe \in E_S)R \subseteq \tr(I)$ and $\dim_\kk I_b = 1$.
Then $(V_I)_\min=\{\bfv\}$ and we can take $\bfv' \in V_I$ such that $\deg \bfx^\bfv < \deg \bfx^{\bfv'}$.
Since $\bfv, \bfv' \in V_I$ and
$\bfv \neq \bfv'$,
we get $\bfv'-\bfv \in \ZZ{S}\setminus S$.
We claim
$\bfv'-\bfv \in \RR_{\geqq 0}S$.

Assume
$\bfv'-\bfv \notin \RR_{\geqq 0}S$.
Then by Lemma \ref{extremalpoly},
there exists $\bfa \in E_S$
such that
$\bfv' - \bfv + \bfa \notin S.$
On the other hand,
since $\bfa \in (G_S)_\min$ and $\bfx^\bfa \in \tr(I)$,
there exists $\bfu \in S-V_I$
such that $\bfa=\bfu+\bfv$ by Proposition \ref{NGtokuchouAffine} and Proposition \ref{extremal}.
Thus we get $\bfv' - \bfv + \bfe = \bfu+\bfv' \in S,$
this yields a contradiction.

Then, $\bfv'-\bfv \in \RR_{\geqq 0}S$ and
we get $\bfv'-\bfv \in \overline{S} \setminus S$.
Since $\bfv'-\bfv \in \overline{S}\setminus S$,
there exist
$s_i \in \overline{S}$ and
face $F$ of $S$ such that
$\bfv'-\bfv \in s_i +\ZZ F$
and $(s_i+\ZZ F) \cap S= \emptyset$
by using Theorem \ref{holedecomp}.
Since $\depth R \geqq 2$,
every family of holes has dimension at least $1$
by Theorem \ref{S2}.
So we have $\dim F \geqq 1$.
Since $\bfv'-\bfv \in s_i +\ZZ F$,
we can take $\bfx \in \ZZ F$ and write
$\bfv'-\bfv=s_i+\bfx$.
Thus, we get
$(\bfv'-\bfv + \ZZ F)\cap S=(s_i+\ZZ F) \cap S=\emptyset$.
In particular,
by taking an extremal ray $l$ of facet $F$,
we get
    ($\bfv'-\bfv + \ZZ l) \cap S = \emptyset$.
On the other hand,
we have $\bfv'-\bfv + l \in S$ because $\bfx^l \in \tr(\omega_R)$.
This yields a contradiction.
\end{proof}

\begin{Corollary}\label{NGhvector}
Let $R=\kk[S]$ be a semi-standard graded Cohen-Macaulay affine semigroup ring.
  If $R$ is not Gorenstein and $(\bfx^\bfe : \bfe \in E_S)R \subseteq \tr(\omega_{R})$,
  then $h_{s(R)} \geqq 2$.
In particular, if $R$ is non-Gorenstein nearly Gorenstein, then $h_{s(R)} \geqq 2$.
\end{Corollary}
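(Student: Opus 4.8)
The plan is to deduce the Corollary directly from Theorem~\ref{TraceIdealAffine} by choosing $I$ to be the canonical ideal of $R$. Since $R=\kk[S]$ is a domain, the canonical module is isomorphic, up to a degree shift, to an ideal $I_R\subseteq R$ (this is the fact recorded before Lemma~\ref{extremalpoly}), and I would set $I=I_R$. Because $R$ is not Gorenstein we have $r(R)\geq 2$, so $\omega_R$ is not cyclic and $I_R$ is not principal, which is precisely the hypothesis imposed on $I$ in the Theorem. The remaining hypotheses to verify are $\depth R\geq 2$ and $(\bfx^\bfe:\bfe\in E_S)R\subseteq\tr(I_R)=\tr(\omega_R)$; the latter is assumed outright, and I deal with the former in the last step.

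The crucial translation is the identity $\dim_\kk (I_R)_b=h_{s(R)}$, where $b=\min\{i:(I_R)_i\neq 0\}$. To establish it I would invoke the Hilbert-series functional equation for a Cohen--Macaulay graded ring of dimension $d$, namely $H_{\omega_R}(t)=(-1)^d H_R(t^{-1})$. Writing $H_R(t)=\big(\sum_{i=0}^{s}h_i t^i\big)/(1-t)^d$ with $s=s(R)$, a short computation gives
\[
H_{\omega_R}(t)=\frac{h_s\,t^{\,d-s}+h_{s-1}\,t^{\,d-s+1}+\cdots+h_0\,t^{\,d}}{(1-t)^d},
\]
so the lowest nonzero graded component of $\omega_R$ lies in degree $d-s$ and has $\kk$-dimension $h_s$. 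Passing from $\omega_R$ to the canonical ideal $I_R$ only shifts degrees and hence preserves the dimensions of graded pieces, whence $\dim_\kk (I_R)_b=h_{s(R)}$.

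Finally I would assemble the pieces. As $R$ is Cohen--Macaulay, $\depth R=\dim R$, so it suffices to exclude $\dim R\leq 1$: a $0$-dimensional pointed affine semigroup forces $S=\{0\}$ and $R=\kk$, while in the $1$-dimensional case the single extremal ray must carry a degree-one generator lying in $S$ in order that $R_1\neq 0$ (otherwise $\kk[R_1]=\kk$ and $R$ cannot be module-finite over it), which forces $S$ to be saturated and $R\cong\kk[t]$; both possibilities are Gorenstein and are ruled out by hypothesis. Thus $\dim R\geq 2$, so $\depth R\geq 2$, and Theorem~\ref{TraceIdealAffine} gives $\dim_\kk (I_R)_b\geq 2$, i.e.\ $h_{s(R)}\geq 2$. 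For the ``in particular'' clause, if $R$ is non-Gorenstein nearly Gorenstein then $\tr(\omega_R)=\mathbf{m}$, and by Proposition~\ref{extremal} each $\bfx^\bfe$ with $\bfe\in E_S$ lies in $R_1\subseteq\mathbf{m}$, so $(\bfx^\bfe:\bfe\in E_S)R\subseteq\tr(\omega_R)$ and the first part applies. I expect the main obstacle to be the bookkeeping that identifies $h_{s(R)}$ with $\dim_\kk(I_R)_b$ via the duality $H_{\omega_R}(t)=(-1)^d H_R(t^{-1})$ and the degree shift, together with the small but necessary check that non-Gorensteinness forces $\dim R\geq 2$; the genuinely hard analytic content is already packaged inside Theorem~\ref{TraceIdealAffine}.
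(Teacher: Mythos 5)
Your proposal is correct and takes essentially the same approach as the paper: both apply Theorem~\ref{TraceIdealAffine} to the canonical ideal $I_R \cong \omega_R$ together with the identification $h_{s(R)} = \dim_\kk (\omega_R)_{-a(R)}$ (equivalently $\dim_\kk (I_R)_b$). The only difference is that you explicitly verify hypotheses the paper's three-line proof leaves implicit --- non-principality of $I_R$ from $r(R)\geqq 2$, and $\depth R \geqq 2$ by ruling out $\dim R \leqq 1$ for non-Gorenstein semi-standard graded affine semigroup rings --- which fills in, rather than replaces, the paper's argument.
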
\begin{proof}
Recall that $\omega_R$ is isomorphic to an ideal $I_R$ of $R$ as graded $R$-module.
Since $\tr(\omega_R)=\tr(I_R)$
and $h_s= \dim_\kk (\omega_R)_{-a(R)}$,
the assertion follows from Theorem \ref{TraceIdealAffine}.
\end{proof}

\begin{Corollary}\label{type2NG}
For any semi-standard graded Cohen-Macaulay affine semigroup ring, if it is nearly Gorenstein with Cohen-Macaulay type 2, then it is level.
\end{Corollary}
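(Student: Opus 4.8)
The plan is to prove Corollary~\ref{type2NG} by combining the just-proved Theorem~\ref{TraceIdealAffine} (via its consequence Corollary~\ref{NGhvector}) with the defining property of the canonical module under the level condition. Let $R=\kk[S]$ be a semi-standard graded Cohen--Macaulay affine semigroup ring that is nearly Gorenstein with Cohen--Macaulay type $r(R)=2$. The goal is to show $R$ is level, i.e.\ that all minimal generators of $\omega_R$ sit in a single degree. Since $r(R)=\mu(\omega_R)=2$, there are exactly two minimal generators, so being level is exactly the statement that these two generators have equal degree. I would set up the $\NN^d$-graded picture: identifying $\omega_R$ with the canonical ideal $I_R\subseteq R$, the set $V_{\omega_R}$ of generators consists of two lattice vectors, and $\mu(\omega_R)=2$ means $|V_{\omega_R}|=2$.

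First I would dispose of the trivial case: if $R$ is Gorenstein then $r(R)=1$, contradicting $r(R)=2$, so $R$ is necessarily non-Gorenstein. Being nearly Gorenstein and non-Gorenstein means $\tr(\omega_R)=\mathbf{m}$, and in particular the extremal-ray hypothesis $(\bfx^\bfe:\bfe\in E_S)R\subseteq \tr(\omega_R)$ of Corollary~\ref{NGhvector} is automatically satisfied, since $(\bfx^\bfe:\bfe\in E_S)R\subseteq R_1\subseteq \mathbf{m}$ by Proposition~\ref{extremal}. Thus Corollary~\ref{NGhvector} applies and yields $h_{s(R)}\geqq 2$.

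Next comes the key translation step. Recall from the proof of Corollary~\ref{NGhvector} that $h_{s(R)}=\dim_\kk(\omega_R)_{-a(R)}$, where $-a(R)$ is the least degree in which $\omega_R$ is nonzero. In the notation of Theorem~\ref{TraceIdealAffine} applied to $I=I_R$, this least degree is exactly $b=\min\{i:(I_R)_i\neq 0\}$, so $h_{s(R)}=\dim_\kk (I_R)_b$. The inequality $h_{s(R)}\geqq 2$ therefore says $\dim_\kk(I_R)_b\geqq 2$, meaning there are at least two linearly independent elements of $\omega_R$ in its lowest degree. But $\mu(\omega_R)=r(R)=2$, and the number of minimal generators in the lowest degree $b$ equals $\dim_\kk(I_R)_b$ (no lower-degree generators can contribute, so $(I_R)_b$ consists entirely of minimal generators in degree $b$). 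Hence both minimal generators already lie in degree $b$, forcing all generators of $\omega_R$ to have the same degree, which is precisely levelness.

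The main obstacle I anticipate is justifying cleanly that $\dim_\kk(I_R)_b$ counts minimal generators in the bottom degree rather than arbitrary elements, i.e.\ that every element of $(I_R)_b$ is part of a minimal generating set and none is a product of lower-degree ring elements with generators. This is where the graded Nakayama lemma enters: since $b$ is the \emph{minimal} degree of $\omega_R$, the degree-$b$ piece $(I_R)_b$ maps injectively into $\omega_R/\mathbf{m}\omega_R$ (there is nothing in lower degree to absorb it), so $\dim_\kk(I_R)_b$ is a lower bound for the number of degree-$b$ minimal generators and hence at most $\mu(\omega_R)=2$. Combining $2\leqq \dim_\kk(I_R)_b\leqq \mu(\omega_R)=2$ pins everything down. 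Once this bookkeeping is in place the result follows immediately, so the argument is short; the substance is all carried by Theorem~\ref{TraceIdealAffine}.
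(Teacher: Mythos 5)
Your proof is correct and takes essentially the same approach as the paper: both rest on Corollary~\ref{NGhvector}, the identification $h_{s(R)}=\dim_\kk(\omega_R)_{-a(R)}$, and a graded-Nakayama count of the two minimal generators of $\omega_R$ by degree. The paper simply runs the argument in contrapositive form (if $R$ were not level, type $2$ would force $h_{s(R)}=1$, contradicting Corollary~\ref{NGhvector}), whereas you argue forward; the mathematical content is identical.
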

\begin{proof}
If $R$ is not level, then $h_s=1$.
This contradicts Corollary \ref{NGhvector}.
\end{proof}

\begin{Examples}\label{exNG1}
By using Theorem \ref{trace} and $\mathtt{Macaulay2}$ $($\cite{M2}$)$, we can check the following:
\begin{itemize}
\item[(a)] Both of $T_1=\QQ[t,s^2t,s^4t,s^6t,s^8t,st^2,s^3t^2]$
and $T_2=\QQ[t,s^3t,s^{6}t,s^{9}t,st^2,s^4t^2]$ are
non-level nearly Gorenstein, and non-standard semi-standard graded affine semigroup ring with $r(T_i)=2i$, where $\deg s^at^b=b$ for any $a,b \in \NN$.
\item[(b)] The affine semigroup ring
$$R=\kk[x_1,x_2,x_3,x_4]/(x_2x_3^2-x_1x_4,x_2^3-x_3x_4,x_1x_2^2-x_3^3)$$
is nearly Gorenstein with $r(R)=\pd(R)=2$, where $\deg x_1=\deg x_2=\deg x_3=1$ and $\deg x_4 = 2$.
This is not semi-standard graded by Theorem \ref{pd2NGsemi}.
\end{itemize}
Example \ref{exNG1} (b) shows that nearly Gorenstein property does not imply $\dim_\kk (\omega_R)_{-a(R)} \geqq 2$ for non-semi-standard graded affine semigroup ring $S$
in general.
\end{Examples}

When the projective dimension is greater than or equal to 3, there are many examples of non-standard semi-standard graded nearly Gorenstein affine semigroup rings.
The following family consists of valuable examples of semi-standard graded affine semigroup rings that are non-level, almost Gorenstein, and nearly Gorenstein
(In the case of standard graded affine semigroup rings, such examples do not exist! Refer to Theorems \ref{s2ikahalevel} and \ref{AGandNG!}).
While we prove here that they are non-level and nearly Gorenstein, the proof of their almost Gorenstein property will be presented in the next section (see Theorem \ref{nonlevelAGwithSocleDeg2}).

\begin{Proposition}\label{NGfamily}
Fix $n \geqq 2$, $1 \leqq k \leqq n+1$ and define the affine semigroup $S$ as
$$S=\langle \{(2i,2n-2i): 0 \leqq i \leqq n \} \cup \{ (2j+2k-1,4n-2j-2k+1): 0 \leqq j \leqq n-1 \} \rangle.$$
Then $R=\kk[S]$ is nearly Gorenstein with $h(R)=(1,n-1,n)$ and
$r(R)=2n-1$.
\begin{proof}
Put $F_1 = \NN(2n,0)$,
$F_2 = \NN(0,2n)$ and put
$$C_i=\{w \in \ZZ S_\bfa \; : \;w+g \notin S_\bfa \; \textit{for any}\; g \in F_i\}$$
for $i=1,2$, respectively.
Denote by $\kk[\omega_S]$ the $R$-submodule of $R$
generated by $\{ \bfx^{v} : v \in \omega_{S} \}$,
where $\omega_S=-(C_1 \cap C_2)$.
By applying \cite[Theorem 3]{goto1976affine} to our case,
$\kk[\omega_S]$ is the canonical module of $R$
if $R$ is Cohen-Macaulay.
First we show $R$ is Cohen-Macaulay.
To check this, it is enough to check $\ZZ S= S \cup C_1 \cup C_2$ (see \cite[Theorem 1]{goto1976affine}).
We put
\begin{equation} \notag
X_1
=
\begin{cases}
\{(-1+2i,1-2i): k-n \leqq i \leqq 0\} & (1 \leqq k \leqq n)\\
\emptyset & (k=n+1),
\end{cases}
\end{equation}
\begin{equation} \notag
X_2
=
\begin{cases}
\{(-1+2i,1-2i) : 1 \leqq i \leqq k-1\} & (2 \leqq k \leqq n+1)\\
\emptyset & (k=1),
\end{cases}
\end{equation}
$Y_1=X_1 \cup \{(n,-n): n \in \ZZ_{>0} \}$ and
$Y_2=X_2 \cup \{(-n,n) : n \in \ZZ_{>0} \}$.
Then we have
$$C_1=\bigcup_{a\in Y_1} (a+\ZZ(2n,0)),
\;\;\;\;C_2=\bigcup_{a\in Y_2} (a+\ZZ(0,2n)).$$
Thus,
$\ZZ S=\{(i,2nj-i): i,j \in \ZZ\}=S \cup C_1 \cup C_2$
so $S$ is Cohen-Macaulay.
Moreover, we can calculate the canonical module as follows:
\begin{align}
\omega_S&=-(C_1 \cap C_2) \nonumber\\
&=\bigcup_{a\in X_1} (-a+\NN(2n,0))
\cup \bigcup_{a\in X_2} (-a+\NN(0,2n))
\cup
\{(i,2nj-i): \; 1 \leqq i \leqq 2n-1 \;\textit{and\;} j \geqq 1 \} \nonumber\\
&=\langle \{(1-2i,-1+2i) : k-n \leqq i \leqq k-1
\} \cup \{(2i,2n-i): 1 \leqq i \leqq n-1 \} \rangle
\nonumber.
\end{align}
Then $r(R)=2n-1$.
Moreover,
we can check
$R$ is nearly Gorenstein
by Proposition \ref{NGsemigroup}.
Lastly, since
$$R=R_1 \oplus R_1\langle x_1^{2j+2k-1}x_2^{4n-2j-2k+1}: 0 \leqq j \leqq n-1 \rangle,$$
we have $h(R)=(1,n-1,n)$.
\end{proof}
\end{Proposition}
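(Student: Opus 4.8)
The plan is to read off every invariant in the statement from the polyhedral geometry of $S$ and then feed the resulting data into the criteria assembled earlier in this section. Grading the ring by $\deg \bfx^{(a,b)} = (a+b)/(2n)$, the $n+1$ generators $(2i,2n-2i)$ have degree $1$ and the $n$ generators $(2j+2k-1,4n-2j-2k+1)$ have degree $2$. Elementary computations then give $\ZZ S = \{(x,y) \in \ZZ^2 : x+y \equiv 0 \pmod{2n}\}$ and $\RR_{\geqq 0}S = \RR_{\geqq 0}^2$; in particular $S$ is simplicial, its two extremal rays are the coordinate half-axes, $E_S = \{(2n,0),(0,2n)\}$, and the two facets are $F_1 = \NN(2n,0)$ and $F_2 = \NN(0,2n)$. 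I would also note at this stage that the degree-$1$ part of $S$ is exactly the set $A$ of first-type generators, while the degree-$2$ part consists of the $2n+1$ even-$x$ points $(2m,4n-2m)$ together with the $n$ odd-$x$ points coming from the second-type generators $B$.

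Second, I would compute the canonical module through Goto's description of canonical modules of simplicial affine semigroup rings \cite{goto1976affine}. The step is to describe the two sets $C_1,C_2$ of lattice points invisible from $F_1$ and $F_2$ as explicit unions of translated rays, to verify $\ZZ S = S \cup C_1 \cup C_2$ (Goto's Cohen--Macaulayness criterion), and then to read off $\omega_S = -(C_1 \cap C_2)$. Its minimal generating set $V_{\omega_R}$ should split into $n$ degree-$0$ monomials supported on the antidiagonal $x+y=0$ and $n-1$ further monomials in degree $1$, so that $r(R) = \mu(\omega_R) = |V_{\omega_R}| = 2n-1$, while the lowest graded piece has $h_{s(R)} = \dim_\kk (\omega_R)_{-a(R)} = n$; the gap between these two numbers is exactly what records the non-levelness advertised before the statement. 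The bookkeeping of which ray-translates appear in $C_1,C_2$ genuinely depends on the parameter $k$, so I expect this to be the first point where a short case distinction ($1 \leqq k \leqq n$ versus $k=n+1$, matching the definitions of $X_1,X_2$) is unavoidable.

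Third, for the $h$-vector I would exploit the splitting of $R$ by the parity of the $x$-coordinate. Multiplication by $R_1 = \kk[A]$ preserves this parity, and the even-$x$ part is precisely $\kk[\langle A\rangle] = R_1$, the coordinate ring of a rational normal curve cone, whose Hilbert series is $(1+(n-1)t)/(1-t)^2$. The odd-$x$ part is the $R_1$-submodule generated by the $n$ degree-$2$ monomials $\bfx^{\bfb}$ with $\bfb \in B$, and counting its graded pieces (equivalently, counting the non-hole odd-$x$ points of $S$ in each degree, which should be $(m-1)n$ in degree $m$) gives Hilbert series $nt^2/(1-t)^2$. Adding the two series yields $(1+(n-1)t+nt^2)/(1-t)^2$, hence $h(R)=(1,n-1,n)$ and $s(R)=2$; this also reconfirms $h_{s(R)}=n$ independently of the canonical module computation.

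Finally, I would establish near-Gorensteinness via the combinatorial criterion of Corollary \ref{NGsemigroup}: for each of the $2n+1$ generators $\bfa \in G_S$ I must exhibit $\bfv \in V_{\omega_R}$ and $\bfu \in S - V_{\omega_R}$ with $\bfa = \bfu + \bfv$. With $V_{\omega_R}$ in hand from the second step, $S - V_{\omega_R}$ is the explicit set of $\bfu \in \ZZ S$ that translate every generator of $\omega_S$ back into $S$, and the verification reduces to a finite matching over the generators of $S$. I expect this last matching --- in particular making it uniform across all admissible $k$ --- to be the main obstacle, since it is where the precise shape of $\omega_S$ and the holes of $S$ interact. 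The depth-$2$ input (every family of holes is at least $1$-dimensional, Theorem \ref{S2}, obtained via Lemma \ref{extremalpoly} and the hole decomposition of Theorem \ref{holedecomp}) is what should guarantee that the required translates genuinely land back in $S$ rather than in one of the holes.
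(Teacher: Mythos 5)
Your plan follows the paper's proof essentially step for step: the same Goto criteria from \cite{goto1976affine} (Cohen--Macaulayness via $\ZZ S = S \cup C_1 \cup C_2$ and $\omega_S = -(C_1 \cap C_2)$), the same $k$-dependent bookkeeping of translated rays, the same count of $n$ degree-zero plus $n-1$ degree-one generators of $\omega_S$ giving $r(R)=2n-1$ and $h_{s(R)}=n$, the same finite matching via Corollary \ref{NGsemigroup}, and the same parity splitting of $R$ as a $\kk[R_1]$-module yielding $h(R)=(1,n-1,n)$. One caveat: your closing appeal to depth $2$ and the hole decomposition (Theorems \ref{holedecomp} and \ref{S2}) is a red herring here --- that machinery drives Theorem \ref{TraceIdealAffine}, whereas for this explicit family the near-Gorenstein check is simply the direct verification that the required translates $\bfu+\bfv$ land in $S$.
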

The family of nearly Gorenstein rings $R$ for Proposition \ref{NGfamily} satisfies $\pd(R)=3$.
There is also another example of a nearly Gorenstein semi-standard graded affine semigroup ring $R$ with $\pd(R)=3$, such as the following.
\begin{Example}
$\QQ[u,s^2u,t^2u,s^2t^2u,su^2,s^3u^2]$
is nearly Gorenstein semi-standard graded affine semigroup ring, where $\deg s^at^bu^c=c$ for any $a,b,c \in \NN$.
We can check it in the same way as Examples \ref{exNG1}.
\end{Example}

A non-Gorenstein nearly Gorenstein standard graded affine semigroup ring with projective dimension 2 does exist, and its characterization is known in the context of projective monomial curves
(see \cite[Theorem A]{miyashita2023nearly}).
However, for non-standard semi-standard graded affine semigroup rings, there are no examples with projective dimension 2 that are nearly Gorenstein, except for those that are Gorenstein.

 \begin{Thm}\label{pd2NGsemi}
    Let $R$ be a non-standard semi-standard graded Cohen-Macaulay affine semigroup ring with $\pd(R) = 2$.
    Then the following conditions are equivalent:
\begin{itemize}
\item[(1)] $R$ is nearly Gorenstein;
\item[(2)] $R$ is Gorenstein.
\end{itemize}
\begin{proof}
It is enough to show that (1) implies (2).
Assume that $R$ is not Gorenstein.
We put $R=\kk[S]$ and $d=\dim R$.
Note that $n=d+2$ by the Auslander-Buchsbaum formula.
Since $R$ is a semi-standard graded affine semigroup ring, we may assume $d\geqq 2$.
By the assumption,
there exists a codimension 2 homogeneous prime binomial ideal $I$
such that
$I$ is minimally generated by three elements and
$R \cong A/I$, where $A=\kk[x_1,\cdots,x_n]$ is a polynomial ring.
Since $I$ is a codimension $2$ lattice ideal and the number of minimal generators of $I$
is 3,
the graded minimal free resolution of $R$ is of the following form by Proposition \ref{codim2peeva}.
Note that $R$ is level by Corollary \ref{type2NG} since $r(R)=2$.
\footnotesize
\begin{equation}\notag
0\rightarrow A(-\deg f_1-\deg u_1)^2  \xrightarrow {X =\left[
\begin{array}{cc}
u_1 & -u_4 \\
-u_2 & u_5 \\
u_3 & -u_6
\end{array}
\right]
}
A(-\deg f_1) \oplus A(-\deg f_2) \oplus A(-\deg f_3) \rightarrow
 A \rightarrow R \rightarrow 0. \small
\end{equation}
\normalsize
Here, $u_i$ is a monomial of $A$ for all $1 \leqq i \leqq 6$
and $f_1=u_1u_5-u_2u_4, f_2=u_3u_4-u_1u_6$ and $f_3=u_2u_6-u_3u_5$.
Since $I$ is a graded prime ideal, we have
$\gcd(u_1u_5,u_2u_4)=\gcd(u_3u_4,u_1u_6)=\gcd(u_2u_6,u_3u_5)=1$ and
\begin{equation}
\deg u_1u_5=\deg u_2u_4, \;\;\; \deg u_3u_4=\deg u_1u_6, \;\;\; \deg u_2u_6=\deg u_3u_5. \label{ddd}
\end{equation}

Moreover, since $R$ is level, we have
\begin{equation}
\deg u_1=\deg u_4, \;\;\;\deg u_2=\deg u_5, \;\;\;\deg u_3=\deg u_6. \label{ccc}
\end{equation}
\begin{itemize}
\item Let $d=2$.
Since $R$ is nearly Gorenstein,
$X$ may be assumed to have one of the following forms by Theorem \ref{trace}.

(i) $X =\left[
\begin{array}{cc}
x_1 & -x_4 \\
-x_2 & u_5 \\
x_3 & -u_6
\end{array}
\right]
$
or
(ii) $X =\left[
\begin{array}{cc}
x_1 & -x_3 \\
-u_2 & x_4 \\
x_2 & -u_6
\end{array}
\right]
$
or
(iii) $X =\left[
\begin{array}{cc}
x_1 & -x_3 \\
-x_2 & x_4 \\
u_3 & -u_6
\end{array}
\right]
$.

(For example,
there is also
a possibility that
$X =\left[
\begin{array}{cc}
u_1 & -x_2 \\
-u_2 & x_3 \\
x_1 & -x_4
\end{array}
\right]
$,
but this
can be regarded to be
the same as (i)).

\begin{itemize}
\item[(i)]
We can write
$X=\left[
\begin{array}{cc}
x_1 & -x_4 \\
-x_2 & x_1^ax_3^b \\
x_3 & -x_1^cx_2^d
\end{array}
\right]
$ for some $a,b,c,d \in \NN$ with $ac=0$.
Moreover, we have
$x_{i},x_{j} \in E_S$
for some
$1 \leqq i<j \leqq 4$
by Proposition \ref{extremal}.
\item[$\cdot$] If $(i,j)=(1,2)$, then we have
$(u_5,u_6)=(x_1,x_2^{\deg x_3})$ or $(x_3,x_1)$ or $(x_3,x_2)$.
$(u_5,u_6)=(x_1,x_2^{\deg x_3})$ implies $x_2^{\deg x_3+1}=x_1x_3$. Since $x_2 \in E_S$, this yields a contradiction.
$(u_5,u_6)=(x_3,x_1)$ or $(x_3,x_2)$ implies $\deg x_i=1$ for all $1 \leqq i \leqq 4$, this leads to a contradiction since $R$ is standard graded.
\item[$\cdot$] If $(i,j)=(1,3)$, then we have
$(u_5,u_6)=(x_3^{\deg x_2},x_1)$ or $(x_3,x_2)$.
By the same argument as above, this yields a contradiction.
\item[$\cdot$] If $(i,j)=(1,4)$, according to (\ref{ddd}), we obtain
$\deg x_2=a+b \deg x_3$ and
$\deg x_3=c+d \deg x_2$,
so we have $(bd-1)\deg x_3=-(c+ad) \leqq 0.$
Thus $bd=0$ or $1$.
If $bd=0$, then we get $x_1^{a+1}=x_2x_4$ or $x_1^{c+1}=x_3x_4$. This leads to a contradiction since $x_1 \in E_S$. If $bd=1$, then we get $a=c=0$ and $x_2^{d+1}-x_3^{b+1}=0$. This yields a contradiction since $I$ is prime.
\item[$\cdot$] If $(i,j)=(2,3)$, then we have
$(a,b)=(1,0)$ or $(0,1)$.
Thus we obtain $\deg x_1=\deg x_4=1$ or $x_2^2-x_3^2=0$ so this yields a contradiction.
\item[$\cdot$] If $(i,j)=(2,4)$, we have $\deg x_1=1$
and $(a,b)=(1,0)$ or $(0,1)$.
If $(a,b)=(1,0)$, we get $(c,d)=(0,1)$ and $x_2^2=x_1x_3$ so this contradicts $x_2 \in E_S$.
If $(a,b)=(0,1)$, we have $\deg x_i=1$ for all $1\leqq i \leqq 4$, this is a contradiction.
\item[$\cdot$] If $(i,j)=(3,4)$, by the same discussion as above, we get a contradiction.
\item[(ii)]
We can write
$X=\left[
\begin{array}{cc}
x_1 & -x_3 \\
-x_3^a & x_4 \\
x_2 & -x_1^b
\end{array}
\right]
$ for some $a,b\in \ZZ_{>0}$.
Moreover, we have
$x_{i},x_{j} \in E_S$
for some
$1 \leqq i<j \leqq 4$
by Proposition \ref{extremal}.
\item[$\cdot$] If $(i,j)=(1,2)$ or $(1,3)$ or $(1,4)$ or $(2,3)$ or $(3,4)$, then we have $x_1^{b+1}=x_2x_3$ or $x_3^{a+1}=x_1x_4$. This contradicts either $x_1 \in E_S$ or $x_3 \in E_S$.
\item[$\cdot$] If $(i,j)=(2,4)$, then $R$ is standard graded, this is a contradiction.
\item[(iii)]
We can write
$X=\left[
\begin{array}{cc}
x_1 & -x_3 \\
-x_2 & x_4 \\
x_3^ax_4^b & -x_1^cx_2^d
\end{array}
\right]
$ for some $a,b\in \ZZ_{>0}$.
Moreover, we have
$x_{i},x_{j} \in E_S$
for some
$1 \leqq i<j \leqq 4$
by Proposition \ref{extremal}.
\item[$\cdot$] If $(i,j)$ equals $(1,2)$, $(1,4)$, $(2,3)$ or $(3,4)$, then $R$ is standard graded, this is a contradiction.
\item[$\cdot$] If $(i,j)=(1,3)$, we get $R/(x_1,x_3)R \cong \kk$. Since $x_1,x_3 \in E_S$ is a regular sequence of $R$,
we get $R \cong A$. This is a contradiction.
\item[$\cdot$] If $(i,j)=(2,4)$, by the same discussion as above, we get a contradiction.
\end{itemize}

\item Let $d=3$.
Since $R$ is nearly Gorenstein,
We can write
$X=\left[
\begin{array}{cc}
x_1 & -x_4 \\
-x_2 & x_5 \\
x_3 & -x_1^ax_2^b
\end{array}
\right]$
for some $(a,b) \in \NN^2 \setminus \{(0,0)\}$
by Theorem \ref{trace}.
Moreover, we have
$x_{i},x_{j},x_{k} \in E_S$
for some
$1 \leqq i<j<k \leqq 5$
by Proposition \ref{extremal}.
\begin{itemize}
\item[$\cdot$] If $(i,j,k)=(1,2,3)$, then $R$ is standard graded, this is a contradiction.
\item[$\cdot$] If $(i,j,k)=(1,2,4)$, we have $x_1x_5=x_2x_4$
and $F=\NN \bfa_2+ \NN \bfa_4$ is a 2-dimensional face of $S$ where $x_i=\bfx^{\bfa_i} \in \kk[S]$ for $i=2,4$.
Thus we get $x_1 \in (x_2,x_4)R$. This yields a contradiction.
\item[$\cdot$] If $(i,j,k)=(1,2,5)$ or $(1,4,5)$ or $(2,4,5)$, by the same discussion as above, we get a contradiction.
\item[$\cdot$] If $(i,j,k)=(1,3,4)$ or $(1,3,5)$ or $(2,3,4)$ or $(2,3,5)$ or $(3,4,5)$, by the same argument as in $d=2$, it contradicts in this case as well.
\end{itemize}
\item Let $d=4$.
By the same argument as in $d=3$, it contradicts in this case as well.
\item Let $d\geqq 5$, $R$ cannot be nearly Gorenstein by Theorem \ref{trace}.
\end{itemize}
\end{proof}
\end{Thm}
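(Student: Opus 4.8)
The plan is to prove the substantive implication $(1) \Rightarrow (2)$ by contraposition: I assume $R$ is nearly Gorenstein but \emph{not} Gorenstein and derive a contradiction, the converse $(2) \Rightarrow (1)$ being immediate since every Gorenstein ring is nearly Gorenstein. Writing $R = A/I$ with $A = \kk[x_1,\dots,x_n]$ a polynomial ring and $d = \dim R$, the Auslander--Buchsbaum formula together with Cohen--Macaulayness gives $\pd(R) + \depth(R) = n$, hence $n = d+2$; one first reduces to $d \geq 2$. Since $R$ is a non-Gorenstein Cohen--Macaulay codimension-$2$ toric quotient, the remark following Proposition \ref{codim2peeva} shows that $I$ is minimally generated by three binomials and that $r(R) = 2$, while Proposition \ref{codim2peeva} itself produces the explicit $3\times 2$ matrix $X$ of monomials presenting the last syzygy.

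The next step is to extract every constraint this matrix must satisfy. Primeness of $I$ yields coprimality relations between the paired monomials and forces degree balances within each relation; since $r(R) = 2$ and $R$ is nearly Gorenstein, Corollary \ref{type2NG} makes $R$ \emph{level}, which imposes column-wise degree equalities on the entries of $X$. The decisive tool is Theorem \ref{trace}$(b)$: as $R$ is a domain of type $2$, $\tr(\omega_R) = I_1(X)$, the ideal generated by the six monomial entries of $X$. Nearly Gorenstein but not Gorenstein means $\tr(\omega_R) = \mathbf{m}$, so $I_1(X) = \mathbf{m}$; because any monomial of degree $\geq 2$ lies in $\mathbf{m}^2$, this equality forces each of the $n$ variables to occur as a degree-$1$ entry of $X$. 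Finally, Proposition \ref{extremal} records that the generators $\bfx^\bfe$ with $\bfe \in E_S$ have degree $1$, so the extremal-ray generators are precisely certain of these variable entries.

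I would then argue by cases on $d$, the counting doing most of the work at the extremes. Since $X$ has only six entries, the requirement that all $n = d+2$ variables occur as entries gives $n \leq 6$, i.e. $d \leq 4$; for $d \geq 5$ this is already impossible, so $R$ cannot be nearly Gorenstein. For $d = 4$ (so $n = 6$) all six entries are forced to be distinct variables, making $X$ extremely rigid, and for $d = 3$ ($n = 5$) exactly one entry is a non-variable monomial; both reduce to the heart of the matter, the case $d = 2$ ($n = 4$). There I normalize $X$, using $I_1(X) = \mathbf{m}$ and the degree relations above, into a short explicit list of admissible shapes, and then run through the possible choices of which pair $x_i, x_j$ lies in $E_S$. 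For each configuration the defining property of a face, $\bfa + \bfb \in F \Leftrightarrow \bfa \in F$ and $\bfb \in F$, together with the degree relations forces either that all variables have degree $1$ (so $R$ is standard, contradicting non-standardness) or that $I$ contains a difference of pure powers such as $x_i^{p} - x_j^{q}$ (contradicting primeness), or that an extremal variable factors nontrivially inside $\mathbf{m}$ (again impossible).

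The main obstacle I anticipate is exactly this finite but delicate bookkeeping at $d = 2$: pinning down which monomial matrices $X$ are simultaneously compatible with primeness, levelness, and $I_1(X) = \mathbf{m}$, and then certifying in each of the resulting sub-cases that the extremal-ray hypothesis collapses into standardness or non-primeness. Each conceptual ingredient (Auslander--Buchsbaum, Proposition \ref{codim2peeva}, Theorem \ref{trace}$(b)$, Proposition \ref{extremal}) is short; the real effort lies in organizing the case tree so that every admissible $X$ is eliminated.
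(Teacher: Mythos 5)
Your proposal is correct and follows essentially the same route as the paper's proof: Auslander--Buchsbaum to get $n=d+2$, the Peeva--Sturmfels resolution (Proposition \ref{codim2peeva}) together with the type-$2$/levelness observation (Corollary \ref{type2NG}), Theorem \ref{trace} to identify $\tr(\omega_R)=I_1(X)=\mathbf{m}$ and thereby force every variable to occur as a degree-one entry of $X$ (giving $d\leqq 4$, with $d\geqq 5$ impossible by counting), and Proposition \ref{extremal} plus the face axiom to eliminate each configuration in the cases $d=2,3,4$. The case-by-case elimination you defer as bookkeeping is exactly what the paper's proof carries out, terminating in the same three kinds of contradiction you predict: standardness of $R$, a difference of pure powers lying in the prime ideal $I$, or an extremal-ray generator factoring nontrivially in $\mathbf{m}$.
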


\begin{Remark}
By the above proof, we also know that non-Gorenstein nearly Gorenstein semi-standard graded affine semigroup rings with projective dimension 2 do not exist when the Krull dimension is greater than or equal to 5.
\end{Remark}


\section{Almost Gorenstein semi-standard graded rings}
Let us recall the definition of the almost Gorenstein {\em graded} ring.
\begin{Definition}[{\cite[Definition 1.5]{goto2015almost}}]

We say that a Cohen--Macaulay graded ring $R$ is {\em almost Gorenstein} if there exists an exact sequence 
\begin{align}\label{ex_seq}
0 \rightarrow R \xrightarrow{\phi} \omega_R(-a) \rightarrow C \rightarrow 0
\end{align}
of graded $R$-modules with $\mu(C)=e(C)$, where $\phi$ is an injection of degree 0. 

\end{Definition}
From now, we will apply the discussion in \cite{higashitani2016almost} below to semi-standard graded rings.
First we consider the condition
\begin{align}\label{condition}
\text{there exists an injection $\phi : R \rightarrow \omega_R(-a)$ of degree 0}. 
\end{align}
This is a necessary condition for $R$ to be almost Gorenstein.
Let $C=\cok(\phi)$. Then $C$ is a Cohen--Macaulay $R$-module of dimension $d-1$ 
if $C\not=0$ (see \cite[Lemma 3.1]{goto2015almost}). 

The condition \eqref{condition} is satisfied if $R$ is a domain or generically Gorenstein and a level ring.
To prove this, we use the following well-known result.

\begin{Proposition}\label{nzd}
  Let $R$ be a semi-standard graded Cohen-Macaulay ring.
  If $R$ is a domain, or generically Gorenstein and a level ring,
  then there exists a homogeneous element $\omega_R$ of degree $-a(R)$ such that $R \cong Rx(-a(R))$.
\begin{proof}
While the proof for standard graded rings is given in \cite[Theorem 4.4.9]{bruns1998cohen}, it also works for semi-standard graded rings.
\end{proof}
\end{Proposition}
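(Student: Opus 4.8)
The plan is to read the conclusion as the existence statement behind condition \eqref{condition}: a homogeneous element $x\in(\omega_R)_{-a(R)}$ with $\ann_R(x)=0$ produces a degree-$0$ injection $R\to\omega_R(-a(R))$ via $1\mapsto x$, whose cyclic image $Rx$ is then free of rank one generated in degree $-a(R)$, i.e. $R\cong Rx(-a(R))$. Note $(\omega_R)_{-a(R)}\neq 0$ by the definition of $a(R)$. Since $R$ is Cohen--Macaulay, $\Ass R=\Min R$ and $\omega_R$ is maximal Cohen--Macaulay with $\Ass\omega_R=\Min R$. For $x\in\omega_R$ the kernel of the multiplication map $\mu_x\colon R\to\omega_R$, $1\mapsto x$, is $\ann_R(x)$, and every associated prime of $\ann_R(x)\subseteq R$ lies in $\Ass R=\Min R$; hence $\mu_x$ is injective if and only if it is injective after localizing at every $\p\in\Min R$. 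This reduction to a pointwise condition at the minimal primes is precisely what makes the argument insensitive to whether the grading is standard or only semi-standard, which is the whole content of the remark that the proof of \cite[Theorem 4.4.9]{bruns1998cohen} transfers.

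First I would dispose of the domain case. Here $\Min R=\{(0)\}$, so $\Ass\omega_R=\{(0)\}$ and $\omega_R$ is torsion-free. If $x\in(\omega_R)_{-a(R)}$ is any nonzero homogeneous element, then $Rx\cong R/\ann_R(x)$ has $\Ass(Rx)\subseteq\Ass\omega_R=\{(0)\}$, which forces $\ann_R(x)=0$ because $R$ is a domain. Thus every nonzero element of the bottom degree already yields the desired injection.

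For the generically Gorenstein and level case I would argue at each minimal prime and then globalize. Generic Gorensteinness gives $(\omega_R)_\p\cong\omega_{R_\p}\cong R_\p$ for every $\p\in\Min R$, so $(\omega_R)_\p$ is cyclic and $\mu_x$ is injective at $\p$ if and only if $x_\p$ generates $(\omega_R)_\p$. Levelness forces $\omega_R$ to be generated in a single degree, necessarily the bottom degree $-a(R)$ (its elements cannot lie in $\mathbf{m}\,\omega_R$, as $\mathbf{m}\,\omega_R$ starts in degree $-a(R)+1$), so $(\omega_R)_{-a(R)}$ surjects onto each one-dimensional residue space $\omega_R\otimes_R\kappa(\p)$, where $\kappa(\p)=R_\p/\p R_\p$. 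By Nakayama, $x_\p$ generates the cyclic module $(\omega_R)_\p$ exactly when the image $\rho_\p(x)$ of $x$ in $\omega_R\otimes_R\kappa(\p)$ is nonzero; the good elements are therefore those lying outside each proper $\kk$-subspace $\ker\rho_\p$. A homogeneous $x\in(\omega_R)_{-a(R)}$ avoiding all the finitely many $\ker\rho_\p$ then furnishes the injection.

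The main obstacle is this last genericity step, and it splits into two points. One is making sure levelness really places a generator in the single graded component $(\omega_R)_{-a(R)}$ that condition \eqref{condition} is allowed to use; this is exactly the role of the level hypothesis, and without it one could be forced into a degree where no faithful element exists. The other is that a finite union of proper $\kk$-subspaces should not exhaust $(\omega_R)_{-a(R)}$. This is immediate when $\kk$ is infinite; for a finite field one passes to $R\otimes_\kk\kk'$ with $\kk'$ an infinite extension, a flat base change that preserves Cohen--Macaulayness, the canonical module, the $a$-invariant, levelness and generic Gorensteinness, and one then descends condition \eqref{condition} by faithful flatness. Apart from this routine reduction, every ingredient is a statement about $\omega_R$ as an $R$-module and its localizations at the minimal primes, so no feature of the semi-standard grading intervenes.
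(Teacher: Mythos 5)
Your reduction and your infinite-field argument are correct, and they are in substance exactly the proof the paper is pointing to: the paper's ``proof'' is only the observation that the argument of \cite[Theorem 4.4.9]{bruns1998cohen} for standard graded rings never uses standardness of the grading, and your write-up makes that argument explicit (injectivity of $\mu_x$ is detected at $\Min R=\Ass R$ since $R$ is Cohen--Macaulay; generic Gorensteinness plus Nakayama turns injectivity at $\p$ into non-vanishing of $\rho_\p(x)$ in the one-dimensional fibre $\omega_R\otimes_R\kappa(\p)$; levelness guarantees that $(\omega_R)_{-a(R)}$ generates $\omega_R$, so each $\ker\rho_\p$ is a proper subspace; and a finite union of proper subspaces cannot exhaust $(\omega_R)_{-a(R)}$ when $\kk$ is infinite). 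The domain case as you present it is also correct, over any field.

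The genuine gap is the final finite-field step. Faithful flatness lets you descend injectivity of a \emph{given} map along $R\to R\otimes_\kk\kk'$, but it does not descend the \emph{existence} of an element with zero annihilator: a good element of $(\omega_{R\otimes_\kk\kk'})_{-a(R)}$ gives you no element of $(\omega_R)_{-a(R)}$ whatsoever. Moreover, no argument can repair this step, because in the ``generically Gorenstein and level'' case the statement is actually false over finite fields. Take $R=\mathbb{F}_2[x,y,z]/(xy,yz,zx)$: it is standard graded, one-dimensional, Cohen--Macaulay, reduced (hence generically Gorenstein), and level with $a(R)=0$ and $h(R)=(1,2)$; from the resolution $0\to S(-3)^2\to S(-2)^3\to S\to R\to 0$ one computes $\omega_R=(Re_1\oplus Re_2)/(ze_1,\;xe_1-xe_2,\;ye_2)$ with $\deg e_1=\deg e_2=0$. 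The only nonzero elements of $(\omega_R)_0$ are $e_1$, $e_2$ and $e_1+e_2$, and they are killed by $z$, $y$ and $x$ respectively; here the three subspaces $\ker\rho_\p$ are the three lines of $\mathbb{F}_2^2$, and they do cover $(\omega_R)_0$. So in that branch of Proposition \ref{nzd} the field must be assumed infinite (note that the paper itself sidesteps finite fields by \emph{assuming} condition \eqref{condition} outright in the Remark following Proposition \ref{x}, rather than deriving it); your proof becomes correct once you add that hypothesis, or restrict the claim over finite fields to the domain case.
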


\begin{Proposition}\label{x}
When $R$ is a domain, or generically Gorenstein and a level ring, $R$ always satisfies the condition \eqref{condition}.
\end{Proposition}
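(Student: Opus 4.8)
The plan is to prove Proposition~\ref{x} by directly unpacking the two hypotheses and reducing both cases to Proposition~\ref{nzd}. Condition~\eqref{condition} asks for an injective degree-$0$ map $R \to \omega_R(-a(R))$, and Proposition~\ref{nzd} produces exactly the data needed: a homogeneous element, say $x \in \omega_R$, of degree $-a(R)$ such that the cyclic submodule $Rx \subseteq \omega_R$ is isomorphic to $R(a(R))$ as a graded module, i.e.\ $R \cong Rx(-a(R))$ after the appropriate shift. So the whole proposition should follow by chaining these facts together.

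First I would invoke Proposition~\ref{nzd} to obtain a homogeneous element $x$ of degree $-a(R)$ with $R \cong Rx(-a(R))$. The key point is that $x$ is a \emph{nonzerodivisor} on $\omega_R$: since $\omega_R$ is a maximal Cohen--Macaulay module over the Cohen--Macaulay ring $R$ and $R \cong Rx(-a(R))$ shows that multiplication by $x$ has trivial annihilator as a map out of $R$, the element $x$ is $\omega_R$-regular (this is the content of how the isomorphism $R \cong Rx(-a(R))$ is set up in \cite[Theorem 4.4.9]{bruns1998cohen}). Then I would define $\phi : R \to \omega_R(-a(R))$ to be multiplication by $x$, that is $\phi(r) = rx$. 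Because $\deg x = -a(R)$, the map $\phi$ raises degree by $-a(R)$ when viewed into $\omega_R$, which is precisely a degree-$0$ map into the shifted module $\omega_R(-a(R))$. Its injectivity is exactly the regularity of $x$ on $\omega_R$ (equivalently, the fact that $r \mapsto rx$ is injective, which is guaranteed by the isomorphism $R \cong Rx(-a(R))$).

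The only real case distinction is whether we are in the domain situation or the generically-Gorenstein-and-level situation, but this distinction is already absorbed into the hypotheses of Proposition~\ref{nzd}, so both cases are handled uniformly: in either case Proposition~\ref{nzd} applies verbatim and yields the element $x$. Thus the body of the proof is essentially the single sentence ``Apply Proposition~\ref{nzd} to obtain $x$, and take $\phi$ to be multiplication by $x$,'' followed by the remark that $\phi$ is injective of degree~$0$.

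The main obstacle, such as it is, is bookkeeping with the degree shift: one must check carefully that multiplication by an element of degree $-a(R)$ indeed gives a degree-$0$ homomorphism $R \to \omega_R(-a(R))$ rather than into $\omega_R$ or some other shift, since an off-by-$a(R)$ sign error here would break the definition of almost Gorenstein in \eqref{ex_seq}. I expect no genuine difficulty beyond this, since all the substantive work (existence of the regular element on the canonical module) has been isolated into Proposition~\ref{nzd}; the present proposition is really just the observation that such an element gives the desired injection.
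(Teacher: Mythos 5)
Your proposal is correct and follows essentially the same route as the paper: the paper's proof likewise consists of applying Proposition~\ref{nzd} to obtain a homogeneous element $x \in (\omega_R)_{-a}$ and taking $\phi : R \xrightarrow{x} \omega_R(-a)$ to be multiplication by $x$, which is injective of degree $0$ precisely because $R \cong Rx(-a(R))$. Your extra care about the degree shift and the injectivity is just the bookkeeping the paper leaves implicit.
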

\begin{proof}
By Proposition \ref{nzd},
we can pick a homogeneous element
$x \in (\omega_R)_{-a}$ such that $R$-homomorphism $\phi : R \xrightarrow{x} \omega_R(-a)$
is an injection of degree $0$, as desired.
\end{proof}

\begin{Remark}
Let $\kk$ be a finite field and let $R$ be a semi-standard graded ring with $R_0=\kk$.
Assume that $R$ satisfies the condition \eqref{condition}.
Put $K=\kk(x)$, then
\begin{align}
0 \rightarrow R \otimes_\kk K \rightarrow \omega_R(-a) \otimes_\kk K \rightarrow C \otimes_\kk K \rightarrow 0 \nonumber
\end{align}
is also exact. Moreover, we have
$\omega_R(-a) \otimes_\kk K=\omega_{R\otimes_\kk K}(-a)$
(\cite[see Exercise 3.3.31]{bruns1998cohen}),
$\dim C = \dim C \otimes_\kk K$, $\depth C = \depth C \otimes_\kk K$ and the Hilbert series of $C$ and $C \otimes_\kk K$ are equal.
\end{Remark}

\begin{Thm}\label{AGtokucho}
Assume that $R$ satisfies \eqref{condition}
and let $(h_0,h_1,\ldots,h_s)$ be the $h$-vector of $R$. Then the following is true.
\begin{itemize}
\item[(1)] We have $\mu(C)=r(R)-1$.
\item[(2)] The Hilbert series of $C$ is
\begin{align}\label{ccc}
\frac{\sum_{j=0}^{s-1}((h_s+\cdots+h_{s-j})-(h_0+\cdots+h_j))t^j}{(1-t)^{\dim R-1}}.
\end{align}
In particular, we have $e(C)=\sum_{j=0}^{s-1}((h_s+\cdots+h_{s-j})-(h_0+\cdots+h_j))$.
\end{itemize}
\begin{proof}

(1) follows from the same proof of \cite[Proposition 2.3]{higashitani2016almost}.

(2) We may assume $R$ is not Gorenstein.
Then there is the short exact sequence of graded $R$-module of degree $0$ as follows:
\begin{align}
0 \rightarrow R \rightarrow \omega_R(-a) \rightarrow C \rightarrow 0.
\end{align}
By Remark 3.4, we may assume $\kk$ is an infinite field.
Then we can show the statement in the same way as \cite[Theorem 2.1]{stanley1991hilbert}.
\end{proof}
\end{Thm}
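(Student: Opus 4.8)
The plan is to extract the Hilbert series of every module in the defining exact sequence and combine them by additivity. Write $H_M(t)=\sum_{n}(\dim_\kk M_n)t^n$ and set $d=\dim R$. For part (1) I would apply the right-exact functor $\kk\otimes_R(-)$ to the sequence $0\to R\xrightarrow{\phi}\omega_R(-a)\to C\to 0$ coming from \eqref{condition}, giving $\kk\otimes_R\omega_R(-a)\to\kk\otimes_R C\to 0$ together with the map out of $\kk\otimes_R R=\kk$. Since $\dim_\kk(\kk\otimes_R M)=\mu(M)$ and $\mu(\omega_R(-a))=\mu(\omega_R)=r(R)$, the claim $\mu(C)=r(R)-1$ reduces to checking that $1\in R$ maps to a \emph{minimal} generator of $\omega_R(-a)$. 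This holds because $\phi(1)=x$ is a nonzero element of $(\omega_R)_{-a}=(\omega_R(-a))_0$, the bottom degree, so $x\notin\mathbf{m}\,\omega_R(-a)$; this is exactly \cite[Proposition 2.3]{higashitani2016almost}, whose argument carries over unchanged.

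For part (2) the decisive ingredient is the functional equation $H_{\omega_R}(t)=(-1)^d H_R(t^{-1})$ for the canonical module of a Cohen--Macaulay graded $\kk$-algebra, a form of graded local duality that remains valid in the semi-standard setting. Substituting $H_R(t)=(h_0+\cdots+h_st^s)/(1-t)^d$ and simplifying, I would obtain $H_{\omega_R}(t)=t^{d-s}(h_s+h_{s-1}t+\cdots+h_0t^s)/(1-t)^d$. Reading off the lowest nonzero degree of $\omega_R$ as $d-s$ identifies the $a$-invariant as $a=a(R)=s-d$, whence
$$H_{\omega_R(-a)}(t)=t^{a}H_{\omega_R}(t)=\frac{\sum_{j=0}^{s}h_{s-j}\,t^{j}}{(1-t)^{d}}.$$

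Additivity of Hilbert series along the exact sequence then gives
$$H_C(t)=H_{\omega_R(-a)}(t)-H_R(t)=\frac{\sum_{j=0}^{s}(h_{s-j}-h_j)\,t^{j}}{(1-t)^{d}}.$$
The numerator vanishes at $t=1$ because $\sum_{j=0}^{s}(h_{s-j}-h_j)=0$, so it is divisible by $1-t$; performing the division replaces the coefficients by their partial sums $(h_s+\cdots+h_{s-j})-(h_0+\cdots+h_j)$ and lowers the denominator to $(1-t)^{d-1}$, which is precisely the expression claimed in (2). Finally, since $C$ is Cohen--Macaulay of dimension $d-1$ (\cite[Lemma 3.1]{goto2015almost}), evaluating the reduced numerator at $t=1$ yields $e(C)$ as the displayed sum.

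The hard part, I expect, is not the bookkeeping but justifying the two structural inputs outside the familiar standard graded case: the Hilbert-series duality for $\omega_R$ and the resulting value $a(R)=s-d$. Both hold for any Cohen--Macaulay graded algebra finitely generated over $R_0=\kk$ with a canonical module, so the duality route above needs no hypothesis on $\kk$. If one instead follows Stanley's original method in \cite{stanley1991hilbert}, reducing to the Artinian case by factoring out a linear system of parameters, then one must first ensure such a system exists, which forces the passage to the infinite field $\kk(x)$ used in the preceding remark on scalar extension; the argument sketched here sidesteps that reduction entirely.
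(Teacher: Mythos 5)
Your part (1) is essentially the paper's own argument: both reduce to the fact that $\phi(1)$ is a nonzero element of the bottom degree of $\omega_R(-a)$, hence lies outside $\mathbf{m}\,\omega_R(-a)$ and is a minimal generator, so that killing it drops $\mu$ by exactly one; the paper likewise just points to \cite[Proposition 2.3]{higashitani2016almost}. For part (2), however, you take a genuinely different and correct route. The paper's proof extends scalars to an infinite field (its Remark on passing to $\kk(x)$, needed so that a linear system of parameters of degree $1$ exists) and then repeats Stanley's Artinian-reduction argument from \cite[Theorem 2.1]{stanley1991hilbert}. You instead use the functional equation $H_{\omega_R}(t)=(-1)^{\dim R}H_R(t^{-1})$, which is graded local duality and holds for any positively graded Cohen--Macaulay $\kk$-algebra admitting a canonical module, standard graded or not (see \cite[Section 4.4]{bruns1998cohen}); from it you read off $a(R)=s(R)-\dim R$ and the series $H_{\omega_R(-a)}(t)=\bigl(\sum_{j=0}^{s}h_{s-j}t^{j}\bigr)/(1-t)^{\dim R}$, subtract $H_R(t)$ by additivity along the exact sequence, and divide the numerator by $1-t$ (legitimate since its coefficients sum to zero, the quotient having the partial sums $(h_s+\cdots+h_{s-j})-(h_0+\cdots+h_j)$ as coefficients). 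Each approach buys something: yours needs no hypothesis on the field and no separate treatment of the Gorenstein case (there the numerator vanishes identically because Gorenstein forces $h_j=h_{s-j}$), and it produces $a(R)=s-\dim R$ as a byproduct; the paper's route avoids re-deriving anything by outsourcing the computation to Stanley's theorem, at the price of the scalar-extension step. The only points worth making explicit in your write-up are that the evaluation of the reduced numerator at $t=1$ computes $e(C)$ because $\dim C=\dim R-1$ and $C$ is Cohen--Macaulay when $C\neq 0$ (the cited \cite[Lemma 3.1]{goto2015almost}), and that reading the lowest nonzero degree of $\omega_R$ off the series uses $h_s\neq 0$, which is part of the definition of the $h$-vector.
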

From this Proposition, we have the Stanley's inequality (\cite[Theorem 2.1]{stanley1991hilbert}): 
\begin{Corollary}\label{prop1}
Assume that $R$ satisfies \eqref{condition}. Let $(h_0,h_1,\ldots,h_s)$ be the $h$-vector of $R$. Then we have the inequality 
$$h_s+\cdots+h_{s-j} \geqq h_0+\cdots+h_j$$ for each $j=0,1,\ldots,\lfloor s/2 \rfloor$.
\end{Corollary}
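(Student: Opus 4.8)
The plan is to obtain the inequalities directly from the shape of the Hilbert series of the cokernel $C$ computed in Theorem~\ref{AGtokucho}(2). Set $d=\dim R$ and, for $0\leqq j\leqq s-1$, abbreviate $c_j=(h_s+\cdots+h_{s-j})-(h_0+\cdots+h_j)$. By Theorem~\ref{AGtokucho}(2) the Hilbert series of $C$ equals $\big(\sum_{j=0}^{s-1}c_jt^j\big)/(1-t)^{d-1}$, so that $(c_0,\ldots,c_{s-1})$ is exactly the $h$-vector of $C$. Since for $0\leqq j\leqq\lfloor s/2\rfloor$ the inequality $c_j\geqq 0$ is literally the asserted $h_s+\cdots+h_{s-j}\geqq h_0+\cdots+h_j$, the whole statement reduces to proving that this $h$-vector is non-negative.

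First I would dispose of the degenerate case $C=0$: then $\phi\colon R\to\omega_R(-a)$ is an isomorphism, $R$ is Gorenstein, every $c_j$ vanishes, and the inequalities hold as equalities. So assume $C\neq 0$; by \cite[Lemma~3.1]{goto2015almost}, $C$ is then a Cohen--Macaulay $R$-module of dimension $d-1$. The heart of the argument is the non-negativity of the $h$-vector of this Cohen--Macaulay module, and this is exactly where the semi-standard hypothesis is used. Extending the base field if necessary (this changes neither the Hilbert series nor the dimension or depth of $C$, as in the remark following Proposition~\ref{x}), we may assume $\kk$ is infinite. Because $R$ is module-finite over the standard graded subring $\kk[R_1]$, the two rings share the same dimension, and by prime avoidance one can choose a homogeneous system of parameters for $C$ consisting of $d-1$ elements of $R_1$; as $C$ is Cohen--Macaulay, these degree-one elements form a regular $C$-sequence $\Theta$. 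Quotienting by $\Theta$ multiplies the Hilbert series by $(1-t)^{d-1}$, so $\sum_{j=0}^{s-1}c_jt^j$ is the Hilbert series of the finite-dimensional graded $\kk$-vector space $C/\Theta C$; hence every $c_j\geqq 0$.

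Combining the two cases gives $c_j\geqq 0$ for all $j$, which for $0\leqq j\leqq\lfloor s/2\rfloor$ is precisely Stanley's inequality. The only step requiring genuine care --- and the expected main obstacle --- is the non-negativity of the $h$-vector of the Cohen--Macaulay module $C$: a semi-standard graded ring need not carry a degree-one homogeneous system of parameters on its own, so one must descend to $\kk[R_1]$ and exploit module-finiteness (together with the harmless field extension) to produce the linear regular sequence $\Theta$. Once that is in place, the corollary is immediate from Theorem~\ref{AGtokucho}.
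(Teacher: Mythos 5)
Your proof is correct and takes essentially the same route as the paper: Corollary \ref{prop1} is obtained there directly from Theorem \ref{AGtokucho}(2) together with the fact that the cokernel $C$ is either zero or a Cohen--Macaulay module of dimension $\dim R-1$, whose $h$-vector is therefore nonnegative. Your write-up simply makes explicit the details the paper leaves implicit, namely the base-field extension via the remark following Proposition \ref{x} and the descent to $\kk[R_1]$ to produce a linear regular sequence on $C$.
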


As the same proof of \cite[Corollary 2.7, Theorem 3.1 and Theorem 4.1]{higashitani2016almost}, we can prove the following.

\begin{Corollary}\label{tokuchou}
Assume that $R$ satisfies \eqref{condition}.
The following conditions are equivalent: 
\begin{itemize}
\item[(a)] there exists an injection $\phi : R \rightarrow \omega_R(-a)$ of degree 0 such that $C=\cok(\phi)$ satisfies $\mu(C)=e(C)$, 
namely, $R$ is almost Gorenstein; 
\item[(b)] every injection $\phi : R \rightarrow \omega_R(-a)$ of degree 0 satisfies $\mu(C)=e(C)$;
\item[(c)]$$
r(R)-1=\sum_{j=0}^{s-1}((h_s+\cdots+h_{s-j})-(h_0+\cdots+h_j)); 
$$
\end{itemize}
In particular, $\phi$ does not matter for the almost Gorenstein property of $R$. 
\end{Corollary}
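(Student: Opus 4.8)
The plan is to read off everything from Theorem~\ref{AGtokucho}, whose whole point is that both numerical invariants of $C$ entering the definition of the almost Gorenstein property are computed purely from invariants of $R$ that do not see the chosen map $\phi$. Concretely, for any injection $\phi\colon R\to\omega_R(-a)$ of degree $0$ (one exists because $R$ satisfies \eqref{condition}), Theorem~\ref{AGtokucho}(1) gives $\mu(C)=r(R)-1$ and Theorem~\ref{AGtokucho}(2) gives $e(C)=\sum_{j=0}^{s-1}\bigl((h_s+\cdots+h_{s-j})-(h_0+\cdots+h_j)\bigr)$. Neither right-hand side mentions $\phi$, so the equality $\mu(C)=e(C)$ is in fact a single assertion about $R$ alone.

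First I would record the reason behind this $\phi$-independence, since it is the only substantive input. The value of $e(C)$ follows from additivity of Hilbert series on the exact sequence $0\to R\to\omega_R(-a)\to C\to 0$: the Hilbert series of $C$ is the difference of those of $\omega_R(-a)$ and $R$, hence depends only on $R$. For $\mu(C)=r(R)-1$ one uses that $\phi(1)\in(\omega_R)_{-a}$ lies in the bottom degree, where $(\mathbf{m}\,\omega_R)_{-a}=0$; thus $\phi(1)$ is automatically a minimal generator of $\omega_R(-a)$, and tensoring the sequence with $\kk$ drops exactly one generator. This is precisely the content already packaged in Theorem~\ref{AGtokucho}, so in the write-up I would simply cite it.

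With that in hand the three implications are immediate. Condition (c) is literally the equation $\mu(C)=e(C)$ after substituting the two formulas above, so (a)$\Rightarrow$(c) is obtained by taking the particular $\phi$ provided in (a) and (c)$\Rightarrow$(b) by noting that, the formulas holding for every $\phi$, the equality $\mu(C)=e(C)$ then holds for every $\phi$. Finally (b)$\Rightarrow$(a) is trivial once we invoke \eqref{condition} to guarantee at least one admissible $\phi$. The closing remark, that $\phi$ does not matter, is then just a restatement of the equivalence of (a) and (b).

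I do not expect a genuine obstacle here: all the work is done in Theorem~\ref{AGtokucho}, and the corollary is essentially the observation that its formulas are independent of $\phi$. The only point requiring care is to invoke the hypotheses of Theorem~\ref{AGtokucho} for an \emph{arbitrary} degree-$0$ injection rather than for one special map, so that the quantifiers in (a) and (b) are correctly separated; this is exactly where the remark on $(\mathbf{m}\,\omega_R)_{-a}=0$ and the additivity of Hilbert series are doing their job.
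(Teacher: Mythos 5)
Your proposal is correct and matches the paper's intended argument: the paper proves this corollary exactly as in Higashitani's standard graded case, namely by observing that Theorem~\ref{AGtokucho} computes $\mu(C)=r(R)-1$ and $e(C)=\sum_{j=0}^{s-1}\bigl((h_s+\cdots+h_{s-j})-(h_0+\cdots+h_j)\bigr)$ for \emph{any} degree-$0$ injection $\phi$, so $\mu(C)=e(C)$ is a condition on $R$ alone and (a), (b), (c) are trivially equivalent. Your added care about applying Theorem~\ref{AGtokucho} to an arbitrary $\phi$ (via $(\mathbf{m}\,\omega_R)_{-a}=0$ and additivity of Hilbert series) is precisely the content being reused, so there is nothing to flag.
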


\begin{Corollary}\label{suff}\label{Higashitani}
Let $R$ be a Cohen--Macaulay semi-standard graded ring with $h(R)=(h_0,h_1,\ldots,h_s)$ where $h_s \neq 0$.
Then the following is true.
\begin{itemize}
\item[(a)] If $R$ is domain and $h_i=h_{s-i}$ for $i=0,1,\cdots, \lfloor \frac{s}{2} \rfloor -1$, then $R$ is almost Gorenstein.
\item[(b)] If $R$ satisfies \eqref{condition} and
$s(R)=1$, then $R$ is always almost Gorenstein.
\end{itemize}
\end{Corollary}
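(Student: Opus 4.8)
The plan is to reduce both statements to criterion (c) of Corollary~\ref{tokuchou}, that is, to the single module-theoretic equality $\mu(C)=e(C)$ for the cokernel $C=\cok(\phi)$ of an injection $\phi\colon R\to\omega_R(-a)$ of degree $0$. Such a $\phi$ exists in both cases: in (a) because a domain is generically Gorenstein, so Proposition~\ref{x} applies, and in (b) by the hypothesis that \eqref{condition} holds. Write the numerator of the Hilbert series of $C$ from Theorem~\ref{AGtokucho}(2) as $g(t)=\sum_{j=0}^{s-1}b_j t^j$, where $b_j=(h_s+\cdots+h_{s-j})-(h_0+\cdots+h_j)=\sum_{i=0}^{j}(h_{s-i}-h_i)$. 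Then $e(C)=g(1)$, $\mu(C)=r(R)-1$, and in general $\mu(C)\le e(C)$. The engine of the whole argument is the elementary observation that if $g(t)$ is a single monomial $c\,t^{b}$, then $C$ (when nonzero) vanishes in degrees $<b$ and satisfies $\dim_\kk C_b=c$; as a $\kk$-basis of $C_b$ then consists of minimal generators, we obtain $c=\dim_\kk C_b\le\mu(C)\le e(C)=g(1)=c$, forcing $\mu(C)=e(C)$ and hence $R$ almost Gorenstein.

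For part (b) this is immediate. When $s=1$ the sum defining $g$ has the single term $g(t)=b_0=h_1-h_0=h_1-1$, a constant, so $g$ is automatically a monomial in degree $0$ and the sandwich above closes. No symmetry or domain hypothesis is needed beyond \eqref{condition}.

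For part (a) the task is to show that the hypothesis $h_i=h_{s-i}$ for $0\le i\le\lfloor s/2\rfloor-1$ collapses $g$ to a monomial. Set $a_i:=h_{s-i}-h_i$; these satisfy the antisymmetry $a_{s-i}=-a_i$, so the partial sums $b_j=\sum_{i=0}^{j}a_i$ telescope. The symmetry kills $a_i$ for $i\le\lfloor s/2\rfloor-1$, and antisymmetry then kills $a_i$ for $\lceil s/2\rceil+1\le i\le s$, leaving at most $a_{\lfloor s/2\rfloor}$ and $a_{\lceil s/2\rceil}=-a_{\lfloor s/2\rfloor}$ nonzero. Tracking the partial sums, one finds $b_j=0$ for every $j$ except possibly $b_{\lfloor s/2\rfloor}=h_{\lceil s/2\rceil}-h_{\lfloor s/2\rfloor}$, which is nonnegative by Stanley's inequality (Corollary~\ref{prop1}) at $j=\lfloor s/2\rfloor$. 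Thus
\[
g(t)=\bigl(h_{\lceil s/2\rceil}-h_{\lfloor s/2\rfloor}\bigr)\,t^{\lfloor s/2\rfloor},
\]
a single monomial, which is $0$ when $s$ is even (in which case $C=0$ and $R$ is Gorenstein, hence almost Gorenstein). Applying the monomial observation then finishes the proof.

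The main obstacle is the bookkeeping in part (a): verifying that the ``almost symmetric'' hypothesis, combined with the antisymmetry of the $a_i$, makes every partial sum $b_j$ vanish except the central one. Once this collapse is established the rest is formal. A secondary point to handle carefully is the degenerate range $\dim R=1$, where $\dim C=0$ and $e(C)$ must be read as the $\kk$-length of $C$; the monomial identity $g(t)=c\,t^{b}$ still yields $\mu(C)=\dim_\kk C=c=e(C)$, so the sandwich argument goes through uniformly in all dimensions.
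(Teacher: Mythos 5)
Your proposal is correct and takes essentially the same route as the paper: the paper's ``proof'' is a citation to Higashitani's arguments (his Corollary 2.7, Theorem 3.1 and Theorem 4.1), and those arguments are precisely your reduction --- use the machinery of Theorem \ref{AGtokucho} and Corollary \ref{tokuchou} to collapse the numerator of the Hilbert series of $C$ to a single monomial $c\,t^{b}$, then close the sandwich $c=\dim_\kk C_b \leqq \mu(C) \leqq e(C)=c$. Your bookkeeping for the partial sums $b_j$ in part (a), the even/odd dichotomy, and the degenerate case $\dim R=1$ are all handled correctly, so there is nothing to fix.
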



Note that $R$ is generically Gorenstein if and only if $Q(R)$ is Gorenstein for Cohen-Macaulay ring $R$,
where $Q(R)$ is the total ring of fractions of $R$.
It is known that every almost Gorenstein ring is generically Gorenstein.

\begin{Lemma}[{\cite[Lemma 3.1(1)]{goto2015almost}}]\label{AGisGG}
Let $R \xrightarrow{\phi} \omega_R \rightarrow C \rightarrow 0$ be an exact sequence of $R$-modules.
If $\dim C \leqq d-1$, then $\phi$ is injective and $R$ is a generically Gorenstein ring.
In particular, if $R$ is almost Gorenstein, then $R$ is generically Gorenstein.
\end{Lemma}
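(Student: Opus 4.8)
The plan is to reduce everything to the minimal primes of $R$ and to exploit the fact that the canonical module commutes with localization. Since $R$ is Cohen--Macaulay it is unmixed and equidimensional: every associated prime is minimal and every $\mathfrak{p} \in \Min(R)$ satisfies $\dim R/\mathfrak{p} = d$, so each $R_\mathfrak{p}$ is an Artinian local Cohen--Macaulay ring. Because $\dim C \leqq d-1$, for any such $\mathfrak{p}$ the inclusion $\ann(C) \subseteq \mathfrak{p}$ would force $\dim R/\ann(C) \geqq d$, contradicting $\dim C \leqq d-1$; hence $\mathfrak{p} \notin \Supp(C)$ and $C_\mathfrak{p} = 0$ for every minimal prime $\mathfrak{p}$.

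Next I would localize the given sequence at such a $\mathfrak{p}$. As localization is exact, $R_\mathfrak{p} \xrightarrow{\phi_\mathfrak{p}} (\omega_R)_\mathfrak{p} \to 0$ is exact, so $\phi_\mathfrak{p}$ is surjective. Using that the canonical module commutes with localization one has $(\omega_R)_\mathfrak{p} \cong \omega_{R_\mathfrak{p}}$, and for the Artinian local ring $R_\mathfrak{p}$ the length equality $\ell(\omega_{R_\mathfrak{p}}) = \ell(R_\mathfrak{p}) < \infty$ holds. A surjection between modules of equal finite length is an isomorphism, so $\phi_\mathfrak{p}$ is an isomorphism and $R_\mathfrak{p} \cong \omega_{R_\mathfrak{p}}$; this is exactly the statement that $R_\mathfrak{p}$ is Gorenstein. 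Since this is valid for every minimal prime, $R$ is generically Gorenstein.

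For injectivity of $\phi$, I would show that $K := \ker \phi$ has no associated primes. Since $K \subseteq R$, every associated prime of $K$ lies in $\Ass(R) = \Min(R)$. But for each minimal prime $\mathfrak{p}$ we have just seen that $\phi_\mathfrak{p}$ is an isomorphism, so $K_\mathfrak{p} = \ker(\phi_\mathfrak{p}) = 0$; as $\mathfrak{p} \in \Ass(K)$ would force $K_\mathfrak{p} \neq 0$, no minimal prime can be associated to $K$. Hence $\Ass(K) = \emptyset$, which forces $K = 0$, and $\phi$ is injective.

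Finally, the \emph{in particular} assertion follows by feeding the defining sequence of an almost Gorenstein ring into the above: if $R$ is almost Gorenstein there is an exact sequence $0 \to R \to \omega_R(-a) \to C \to 0$ in which $C$ is either zero (then $R \cong \omega_R(-a)$ is Gorenstein) or Cohen--Macaulay of dimension $d-1$; in both cases $\dim C \leqq d-1$, and the degree shift $(-a)$ is irrelevant to the ungraded localization argument, so $R$ is generically Gorenstein. The argument itself is essentially bookkeeping with associated primes; the only genuinely delicate points are the two standard inputs, namely that $(\omega_R)_\mathfrak{p} \cong \omega_{R_\mathfrak{p}}$ and that $\ell(\omega_{R_\mathfrak{p}}) = \ell(R_\mathfrak{p})$ for Artinian $R_\mathfrak{p}$, which I would cite from Bruns--Herzog rather than reprove.
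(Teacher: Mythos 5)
The paper offers no proof of this lemma at all---it is quoted directly from \cite[Lemma 3.1(1)]{goto2015almost}---so there is no internal argument to compare yours against. Your proof is correct, and it is essentially the standard argument of that cited source: equidimensionality of the Cohen--Macaulay ring forces $C_{\mathfrak{p}}=0$ at every minimal prime, the length equality $\ell(R_{\mathfrak{p}})=\ell(\omega_{R_{\mathfrak{p}}})$ over the Artinian localizations upgrades the surjection $\phi_{\mathfrak{p}}$ to an isomorphism (so each $R_{\mathfrak{p}}$ is Gorenstein), and $\Ass(\ker\phi)\subseteq\Ass(R)=\Min(R)$ kills the kernel. One small remark on the ``in particular'' step: you invoke the fact that $C$ is Cohen--Macaulay of dimension $d-1$ when nonzero, which is itself another part of the same lemma of \cite{goto2015almost}; this is harmless, but you can avoid any appearance of circularity by noting that the definition of almost Gorenstein already makes $\phi$ injective, so localizing at a minimal prime and running your own length count gives $C_{\mathfrak{p}}=0$, hence $\dim C\leqq d-1$ directly.
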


\section{Almost Gorenstein property versus level property}
When $R$ is a standard graded ring, the next is known by \cite{goto2015almost}.
Actually, this result holds true even when $R$ is a semi-standard graded ring.
Here, we give another proof of \cite{goto2015almost} by using Stanley's enequality (Corollary \ref{prop1}).
\begin{Thm}\label{AGandLevel}
Let $R$ be a semi-standard Cohen--Macaulay graded ring with $\dim R>0$.
Suppose that $R$ is not Gorenstein.
Then the following conditions are equivalent:
\begin{itemize}
\item[(1)] $R$ is almost Gorenstein and level;
\item[(2)] $R$ is generically Gorenstein and $s(R)=1$.
\end{itemize}
\begin{proof}
First we show (2) implies (1).
Since $R$ is semi-standard graded ring,
we can check $s(R)=1$ implies $R$ is level.
Thus $R$ is almost Gorenstein by Proposition \ref{x} and Corollary \ref{Higashitani}(b).
We show (1) implies (2).
Since $R$ is generically Gorenstein by Lemma \ref{AGisGG},
it is enough to show $s(R)=1$. We assume $s(R) \geqq 2$.
Since $R$ is generically Gorenstein and level and almost Gorenstein,
we have $e(C)=r(A)-1=h_s-1$ by
Proposition \ref{x} and Corollary \ref{tokuchou}.
Therefore, we have
$$(s-1)(h_s-1)
=\sum_{j=1}^{\lfloor \frac{s}{2} \rfloor}(s-2j)(h_j-h_{s-j}).$$
Moreover,
we have the following enequalities
by Proposition \ref{x} and Corollary \ref{prop1}.
\begin{align}
h_s-1 &\geqq 0 \tag{*}\\
h_s-1 &\geqq (h_1-h_{s-1}) \tag{*1*}\\
h_s-1 &\geqq (h_1-h_{s-1}) + (h_2-h_{s-2}) \tag{*2*}\\
\vdots \notag \\
h_s-1 &\geqq (h_1-h_{s-1}) + (h_2-h_{s-2}) + \cdots + (h_{\lfloor \frac{s}{2} \rfloor}-h_{s-\lfloor \frac{s}{2} \rfloor}) \tag{*$\left \lfloor \frac{s}{2} \right \rfloor$*}
\end{align}
By $2\times \left({(\text{*}1\text{*})}+{(\text{*}2\text{*})}+\cdots+(\text{*}\left \lfloor \frac{s}{2} \right \rfloor-1\text{*}) \right)+\left(s-2 \lfloor \frac{s}{2} \rfloor\right) \times \left(\text{*}\lfloor \frac{s}{2} \rfloor\text{*}\right)$, we have
\begin{align}
(s-2)(h_s-1) \geqq \sum_{j=1}^{\lfloor \frac{s}{2} \rfloor}(s-2j)(h_j-h_{s-j})=(s-1)(h_s-1) \iff 0 \geqq h_s-1. \tag{**}
\end{align}
By (*) and (**),
we get $h_s=1$. So $R$ is Gorenstein. This yields a contradiction.
\end{proof}
\end{Thm}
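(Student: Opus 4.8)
The plan is to prove the two implications separately, leaning on the $h$-vector machinery assembled in Section~4.

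For $(2) \Rightarrow (1)$ the work is light. The first step is to record that for a semi-standard graded ring the condition $s(R)=1$ forces $R$ to be level — a fact one checks directly, since when $s(R)=1$ the Hilbert series of $\omega_R$ shows that $\omega_R$ is generated in the single degree $-a(R)=\dim R-1$. Granting this, $R$ is generically Gorenstein and level, so it satisfies condition \eqref{condition} by Proposition~\ref{x}, and hence $R$ is almost Gorenstein by Corollary~\ref{Higashitani}(b). No new ideas are needed here.

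The content is in $(1)\Rightarrow(2)$. Since $R$ is almost Gorenstein it is generically Gorenstein by Lemma~\ref{AGisGG}, so it suffices to prove $s(R)=1$, which I do by contradiction: assume $s:=s(R)\geqq 2$. Because $R$ is level, all minimal generators of $\omega_R$ sit in degree $-a(R)$, whence $r(R)=\dim_\kk(\omega_R)_{-a(R)}=h_s$. As $R$ is generically Gorenstein and level it satisfies \eqref{condition} (Proposition~\ref{x}), so, $R$ being almost Gorenstein, Corollary~\ref{tokuchou}(c) gives $e(C)=r(R)-1=h_s-1$, where $C$ is the cokernel of a degree-$0$ injection $R\to\omega_R(-a)$ as in \eqref{ex_seq}. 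The next step converts the multiplicity formula into a usable identity: writing $e(C)=\sum_{j=0}^{s-1}\sum_{i=0}^{j}(h_{s-i}-h_i)$ and interchanging the order of summation yields $e(C)=\sum_{i=0}^{s-1}(s-i)(h_{s-i}-h_i)$. Splitting off the $i=0$ term $s(h_s-1)$ and pairing the index $i$ with $s-i$ in the remaining sum collapses it, and combining with $e(C)=h_s-1$ produces the key equation
\[
(s-1)(h_s-1)=\sum_{j=1}^{\lfloor s/2\rfloor}(s-2j)(h_j-h_{s-j}).
\]

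Finally I feed in Stanley's inequalities (Corollary~\ref{prop1}). For each $j=1,\ldots,\lfloor s/2\rfloor$ the inequality $h_s+\cdots+h_{s-j}\geqq h_0+\cdots+h_j$ rearranges to
\[
h_s-1\geqq (h_1-h_{s-1})+\cdots+(h_j-h_{s-j}),
\]
and the $j=0$ case gives $h_s-1\geqq 0$. I then take the specific weighted sum assigning weight $2$ to these inequalities for $j=1,\ldots,\lfloor s/2\rfloor-1$ and weight $s-2\lfloor s/2\rfloor$ to the one for $j=\lfloor s/2\rfloor$: the coefficient of $h_s-1$ on the left collapses to $s-2$, while the right-hand side reassembles exactly into $\sum_{j=1}^{\lfloor s/2\rfloor}(s-2j)(h_j-h_{s-j})=(s-1)(h_s-1)$ by the key equation. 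This forces $(s-2)(h_s-1)\geqq (s-1)(h_s-1)$, i.e. $h_s\leqq 1$; together with $h_s\geqq 1$ we get $h_s=1$, so $r(R)=h_s=1$ and $R$ is Gorenstein, contradicting the hypothesis. I expect the main obstacle to be discovering this precise choice of weights: it is engineered so that the telescoped left-hand side falls exactly one unit short of the right-hand side, and confirming that the weighted right-hand side recombines term-by-term into the sum of the key equation is the one genuinely delicate piece of bookkeeping.
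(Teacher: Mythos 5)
Your proof is correct and takes essentially the same route as the paper's: the same reduction via Lemma \ref{AGisGG}, Proposition \ref{x}, Theorem \ref{AGtokucho}/Corollary \ref{tokuchou} and levelness ($r(R)=h_s$) to the key identity $(s-1)(h_s-1)=\sum_{j=1}^{\lfloor s/2\rfloor}(s-2j)(h_j-h_{s-j})$, followed by the identical weighted combination of Stanley's inequalities (weight $2$ for $j=1,\ldots,\lfloor s/2\rfloor-1$ and weight $s-2\lfloor s/2\rfloor$ for $j=\lfloor s/2\rfloor$) forcing $h_s=1$. The only differences are cosmetic: you make explicit the interchange-of-summation derivation of the key identity, which the paper leaves implicit.
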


Next, we will discuss non-level and almost Gorenstein semi-standard graded domains $R$ with small socle degree.
For $s(R)=2$,
the following result is known.
\begin{Corollary}[{see \cite[Corollary 3.11]{yanagawa1995castelnuovo} and \cite[Corollary 4.3]{higashitani2016almost}}]\label{s2ikahalevel}
Let $R$ be a standard graded domain.
If $s(R)\leqq 2$, then $R$ is level.
\end{Corollary}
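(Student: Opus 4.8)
The statement to prove is Corollary~\ref{s2ikahalevel}: a standard graded domain $R$ with $s(R) \leqq 2$ is level. I will treat the nontrivial case $s(R) = 2$, since $s(R) \leqq 1$ is immediate (all generators of $\omega_R$ sit in a single degree). Note that this is cited as a known result combining Yanagawa and Higashitani, so the task is really to recall the cleanest argument consistent with the machinery already built up in the paper.

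The plan is to exploit the fact that a standard graded domain satisfies the condition \eqref{condition} by Proposition~\ref{x}, so that Theorem~\ref{AGtokucho} and Corollary~\ref{tokuchou} become available. Write $h(R) = (1, h_1, h_2)$ with $h_2 \geqq 1$; recall that for a standard graded Cohen--Macaulay ring we in fact have $h_i > 0$ for all $i$, as stated in the introduction. The key observation is that levelness is equivalent to $\mu(\omega_R)$ concentrating the generators in the single top degree, i.e. to $\dim_\kk(\omega_R)_{-a} = r(R)$. Since $h_s = \dim_\kk(\omega_R)_{-a}$ and $r(R) = \dim_\kk(\omega_R/\mathbf{m}\,\omega_R)$, I would compare $h_s = h_2$ against $r(R)$ and show they coincide.

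First I would compute $r(R)$ for socle degree $2$ directly from the graded structure of $\omega_R$. Because $R$ is a standard graded domain, it is generically Gorenstein, so by Corollary~\ref{suff}(a) (with $s = 2$, where the symmetry condition $h_i = h_{s-i}$ for $i = 0, \ldots, \lfloor s/2\rfloor - 1 = -1$ is vacuous) $R$ is already almost Gorenstein. Hence Corollary~\ref{tokuchou}(c) applies and gives
\begin{equation}\notag
r(R) - 1 = \sum_{j=0}^{1}\bigl((h_2 + \cdots + h_{2-j}) - (h_0 + \cdots + h_j)\bigr) = (h_2 - h_0) + \bigl((h_2 + h_1) - (h_0 + h_1)\bigr) = 2(h_2 - 1).
\end{equation}
Thus $r(R) = 2h_2 - 1$. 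Separately, the generators of $\omega_R$ live in degrees $-a$ (the top, contributing $h_2$) and possibly $-a-1$ (the next, whose dimension is constrained by $h_1$ and the Stanley inequality $h_2 \geqq h_1$ from Corollary~\ref{prop1}). I would count the minimal generators degree by degree: the top degree contributes exactly $h_2$ generators, and I must show the lower degree contributes the remaining $r(R) - h_2 = h_2 - 1$ generators \emph{only} when $h_2 > 1$, while verifying that in the level situation these must vanish.

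The main obstacle is reconciling the count: the formula $r(R) = 2h_2 - 1$ seems to force $r(R) > h_2$ whenever $h_2 \geqq 2$, which would contradict levelness. The resolution — and the genuinely delicate point — is that for a standard graded \emph{domain} the Castelnuovo-type argument of Yanagawa (\cite{yanagawa1995castelnuovo}) forces additional vanishing: one shows using the domain hypothesis and the multiplication maps $R_1 \otimes (\omega_R)_{n} \to (\omega_R)_{n+1}$ being injective-enough that $\omega_R$ cannot have generators below the top degree, which pins down $h_1 = h_2$ and collapses the discrepancy. Concretely, I would show that $R_1 \cdot (\omega_R)_{-a-1} = (\omega_R)_{-a}$ by a general-position/nonzerodivisor argument (a nonzero linear form is a nonzerodivisor on the torsion-free $\omega_R$ over the domain $R$), so that no minimal generator survives in degree $-a-1$; combined with the Hilbert-series bookkeeping this yields $h_1 = h_2$ and hence, retracing the count, $r(R) = h_2 = \dim_\kk(\omega_R)_{-a}$, which is precisely levelness. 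The crux is thus the injectivity of multiplication by a general linear form on $\omega_R$, which is where the domain assumption (and standard-gradedness, guaranteeing enough linear forms) is indispensable.
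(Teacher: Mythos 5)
The paper does not actually prove this corollary---it quotes it from \cite{yanagawa1995castelnuovo} and \cite{higashitani2016almost}---so your proposal has to stand on its own, and it does not: the opening step rests on an arithmetic slip that invalidates everything built on it. For $s=2$ the index bound in Corollary~\ref{suff}(a) is $\lfloor s/2\rfloor-1=0$, not $-1$, so the hypothesis there is $h_0=h_{2}$, i.e.\ $h_2=1$; it is \emph{not} vacuous. Hence you have not shown that $R$ is almost Gorenstein, and in fact that claim is false in every nontrivial case: since a domain satisfies \eqref{condition} by Proposition~\ref{x}, Corollary~\ref{tokuchou}(c) says that almost Gorensteinness with $s=2$ is \emph{equivalent} to $r(R)=2h_2-1$, whereas levelness (the statement you are proving) forces $r(R)=\dim_\kk(\omega_R)_{-a}=h_2$; these are compatible only when $h_2=1$, i.e.\ when $R$ is Gorenstein. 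This is exactly what Theorem~\ref{AGandLevel} predicts: a non-Gorenstein ring cannot be both almost Gorenstein and level unless $s(R)=1$. So your intermediate formula $r(R)=2h_2-1$ contradicts the conclusion in all cases with $h_2\geqq 2$, and no amount of additional vanishing can reconcile the two counts; in particular forcing ``$h_1=h_2$'' neither follows from anything you argue nor would it help, since $h_1$ does not appear in either expression for $r(R)$. The almost Gorenstein detour must simply be deleted.

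The rescue argument is also flawed on its own terms. The degree bookkeeping is backwards: by the definition $a(R)=-\min\{n:(\omega_R)_n\neq 0\}$ one has $(\omega_R)_{-a-1}=0$ identically, and the generators that threaten levelness sit in degrees $-a+1$ and higher, not $-a-1$. More importantly, the mechanism you invoke---a general linear form is a nonzerodivisor on the torsion-free module $\omega_R$, ``hence'' $R_1\cdot(\omega_R)_{-a}=(\omega_R)_{-a+1}$---is a non sequitur: injectivity of multiplication gives no surjectivity statement whatsoever. That surjectivity is precisely the assertion that $\omega_R$ has no minimal generator in degree $-a+1$, which for $s(R)=2$ \emph{is} the levelness claim; it is exactly the point where the cited proofs (Yanagawa's Castelnuovo-type lemma, Higashitani's argument) do genuine work with the domain hypothesis. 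Your sketch presupposes the only nontrivial step, so the proposal has a real gap, compounded by the internally inconsistent scaffolding around it.
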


In the case of semi-standard graded domain, the condition $s(R)=2$ does not implies level property in general.
The following is known about the Cohen-Macaulay type.

\begin{Proposition}[{\cite[Proposition 3.6]{higashitani2018non}}]\label{type}
Let $R$ be a Cohen--Macaulay semi-standard graded ring with the $h$-vector $(h_0,h_1,h_2)$. If $R$ is not level and $\kk[R_1]$ is a domain, then the Cohen-Macaulay type $r(R)$ of $R$ is equal to $h_1+h_2$.
\end{Proposition}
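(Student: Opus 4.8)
The final statement to prove is Proposition~\ref{type}: for a Cohen--Macaulay semi-standard graded ring $R$ with $h$-vector $(h_0,h_1,h_2)$ that is \emph{not} level and whose Veronese-type subring $\kk[R_1]$ is a domain, the Cohen--Macaulay type equals $r(R)=h_1+h_2$. Since this is quoted from \cite[Proposition 3.6]{higashitani2018non}, my plan is to reconstruct the argument using the machinery already assembled in Section~4. The strategy is to extract $r(R)$ directly from the minimal generators of $\omega_R$, using the fact that non-levelness forces the generators to spread across at least two degrees, and then to pin down the exact count via the $h$-vector.

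First I would recall that $r(R)=\mu(\omega_R)$ and that, because $s(R)=2$, the canonical module $\omega_R$ has its generators concentrated in the two relevant degrees; the $h$-vector being length $3$ means the socle degree is $2$, so $(\omega_R)_{-a}$ and $(\omega_R)_{-a+1}$ are the only degrees that can carry minimal generators. The plan is to count generators degree by degree. In the top degree $-a(R)$ we have $\dim_\kk(\omega_R)_{-a} = h_s = h_2$ generators, all of which are necessarily minimal (nothing below them). The subtlety is the next degree: a generator in degree $-a+1$ is minimal precisely when it is not in the image of multiplication $R_1 \otimes (\omega_R)_{-a} \to (\omega_R)_{-a+1}$.

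The key step, and where the hypothesis $\kk[R_1]$ domain enters, is to show that this multiplication map $R_1 \otimes (\omega_R)_{-a} \to (\omega_R)_{-a+1}$ is injective, equivalently that it contributes exactly $\dim_\kk R_1$ worth of non-minimal elements—so that the number of \emph{minimal} generators in degree $-a+1$ equals $\dim_\kk(\omega_R)_{-a+1} - \dim_\kk R_1 \cdot(\text{something})$. Using the duality between $\omega_R$ and $R$ and reading off Hilbert-function values, one has $\dim_\kk(\omega_R)_{-a+1}$ determined by $h_1$ and $h_2$. Combining the count $h_2$ from the top degree with the count from degree $-a+1$, and using non-levelness to guarantee the latter count is strictly positive (so that generators genuinely occur in both degrees), the totals should collapse to $r(R)=h_1+h_2$. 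I expect the main obstacle to be the injectivity/rank computation of the multiplication map: this is exactly the place where $\kk[R_1]$ being a domain is indispensable, since it prevents zero-divisor collapse and lets me identify the image dimension with $\dim_\kk R_1$ via a general-element or nonzerodivisor argument analogous to Proposition~\ref{nzd}. Once that rank is fixed, the remaining bookkeeping is routine arithmetic on the $h$-vector, and I would simply cite the structure of \cite[Theorem 3.5]{higashitani2018non} to close the gap rather than re-deriving it from scratch.
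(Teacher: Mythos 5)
Your proposal has a genuine gap, and in fact the step you yourself identify as the crux is false. (Note first that the paper offers no proof of this proposition at all --- it is quoted verbatim from \cite[Proposition 3.6]{higashitani2018non} --- so your argument can only be measured against the mathematics, not against an internal proof.) Your setup (steps counting generators of $\omega_R$ in degrees $-a$ and $-a+1$ only) is fine, but the dimension bookkeeping then goes wrong. Write $d=\dim R$. From the Hilbert series one reads off $\dim_\kk R_1=d+h_1$, $\dim_\kk(\omega_R)_{-a}=h_2$ and $\dim_\kk(\omega_R)_{-a+1}=h_1+dh_2$. Hence the source of your multiplication map $R_1\otimes(\omega_R)_{-a}\to(\omega_R)_{-a+1}$ has dimension $(d+h_1)h_2$, which exceeds the dimension of the target by $h_1(h_2-1)$; since non-levelness forces $h_1\geq 1$ (if $h_1=0$ the Artinian reduction has no degree-one piece and $R$ is automatically level), injectivity is impossible whenever $h_2\geq 2$, and when $h_2=1$ injectivity would leave no minimal generator in degree $-a+1$, contradicting non-levelness. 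Worse, even if injectivity were granted, your count would give $h_1+dh_2-(d+h_1)h_2=h_1(1-h_2)$ generators in degree $-a+1$, i.e.\ $r(R)=h_2+h_1(1-h_2)$, not $h_1+h_2$.

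What the equality $r(R)=h_1+h_2$ actually demands is the opposite: the image of that multiplication map must be as \emph{small} as possible. Choosing (after a harmless field extension) a linear system of parameters $\theta_1,\dots,\theta_d\in R_1$, the subspace $\sum_i\theta_i(\omega_R)_{-a}$ has dimension exactly $dh_2$, because $\theta$ is a regular sequence on the maximal Cohen--Macaulay module $\omega_R$, so the syzygies among the $\theta_i$ on $\omega_R$ are Koszul and begin in degree $-a+2$. Since the number of minimal generators in degree $-a+1$ is $h_1+dh_2-\dim_\kk R_1(\omega_R)_{-a}$, one gets $r(R)=h_1+h_2$ if and only if $R_1(\omega_R)_{-a}=\sum_i\theta_i(\omega_R)_{-a}$, i.e.\ if and only if, in the Artinian reduction $\bar R=R/(\theta_1,\dots,\theta_d)$, multiplication $\bar R_1\cdot(\omega_{\bar R})_{-a}$ vanishes; by Matlis duality ($r(R)$ equals the $\kk$-dimension of the socle of $\bar R$) this is exactly the statement $\bar R_1\cdot\bar R_1=0$, i.e.\ \emph{every} element of $\bar R_1$ is a socle element. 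This propagation --- from the single nonzero degree-one socle element that non-levelness provides to all of $\bar R_1$ --- is the entire content of the proposition, and it is precisely where the hypothesis that $\kk[R_1]$ is a domain must enter; your proposal never engages with it, using non-levelness only to make a count positive, which yields nothing beyond $r(R)>h_2$. That the domain hypothesis carries real weight is shown by $R=\kk[x,y,t]/(x^2,xy,y^3)$ with all variables of degree $1$: it is Cohen--Macaulay with $h$-vector $(1,2,1)$ and non-level, yet $r(R)=2\neq h_1+h_2=3$. Finally, your fallback of citing \cite[Theorem 3.5]{higashitani2018non} to ``close the gap'' concedes the point: that structure result is where the propagation is established, so the proposal would then add nothing beyond the citation the paper already makes.
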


By using this, we have the following.

\begin{Proposition}
Let $R$ be a Cohen--Macaulay semi-standard graded domain with $s(R)=2$. The following conditions are equivalent:
\begin{itemize}
\item[(1)] $R$ is non-level and almost Gorenstein;
\item[(2)] $R$ is almost Gorenstein and $h(R)=(1,a,a+1)$ for some $a>0$;
\item[(3)] $R$ is non-level and $h(R)=(1,a,a+1)$ for some $a>0$;
\end{itemize}

\begin{proof}
First we show (1) implies (2).
Since $R$ is almost Gorenstein,
then we have
$2(h_2-1)=(h_1+h_2)-1$
by Corollary \ref{tokuchou} and
\ref{type}.
Thus $h(R)=(1,h_1,h_1+1)$ and $h_1>0$ since $R$ is non-level.
Next we show (2) implies (3).
If $R$ is level, then
we have $a=0$ since $R$ is almost Gorenstein.
Lastly, we show (3) implies (1).
Since $R$ is non-level,
we get $r(R)=2a+1$ by Proposition \ref{type}.
Then $R$ is almost Gorenstein by Proposition \ref{tokuchou}.
\end{proof}
\end{Proposition}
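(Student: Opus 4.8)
The plan is to prove the cyclic chain of implications (1) $\Rightarrow$ (2) $\Rightarrow$ (3) $\Rightarrow$ (1), using as the central engine the two structural facts already assembled: Corollary \ref{tokuchou}, which translates the almost Gorenstein property into the single numerical equation $r(R)-1=\sum_{j=0}^{s-1}((h_s+\cdots+h_{s-j})-(h_0+\cdots+h_j))$, and Proposition \ref{type}, which computes $r(R)=h_1+h_2$ whenever $R$ is non-level and $\kk[R_1]$ is a domain. Since $R$ is assumed to be a semi-standard graded \emph{domain}, $\kk[R_1]$ is also a domain, so Proposition \ref{type} is always available in the non-level case.

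First I would prove $(1)\Rightarrow(2)$. Here $R$ is non-level and almost Gorenstein, with $s(R)=2$, so $h(R)=(1,h_1,h_2)$. Because $R$ is a domain it satisfies \eqref{condition} by Proposition \ref{x}, so Corollary \ref{tokuchou} applies and gives $r(R)-1=\sum_{j=0}^{1}((h_2+\cdots+h_{2-j})-(h_0+\cdots+h_j))$. Expanding the $s=2$ sum yields $r(R)-1=(h_2-h_0)+((h_2+h_1)-(h_0+h_1))=2(h_2-1)$. On the other hand, non-levelness plus the domain hypothesis give $r(R)=h_1+h_2$ by Proposition \ref{type}. Combining the two expressions produces $2(h_2-1)=(h_1+h_2)-1$, which simplifies to $h_2=h_1+1$. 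Writing $a=h_1$ we obtain $h(R)=(1,a,a+1)$; and $a>0$ follows because $a=0$ would force $h(R)=(1,0,1)$, whose generators of $\omega_R$ sit in a single degree, contradicting non-levelness. This yields (2).

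Next, $(2)\Rightarrow(3)$ is a short contrapositive argument: assuming $R$ is level and almost Gorenstein with $h(R)=(1,a,a+1)$, I would run the almost Gorenstein equation again. In the level case $r(R)=h_s=h_2=a+1$, so Corollary \ref{tokuchou} gives $a=r(R)-1=2(h_2-1)=2a$, forcing $a=0$; but (2) asserts $a>0$, a contradiction. Hence $R$ cannot be level, which is exactly (3). Finally, for $(3)\Rightarrow(1)$ I would start from $R$ non-level with $h(R)=(1,a,a+1)$, $a>0$. Non-levelness and the domain hypothesis give $r(R)=h_1+h_2=a+(a+1)=2a+1$ by Proposition \ref{type}. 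Then the almost Gorenstein criterion (c) of Corollary \ref{tokuchou} needs checking: the right-hand side is $2(h_2-1)=2((a+1)-1)=2a=r(R)-1$, so condition (c) holds and $R$ is almost Gorenstein; combined with non-levelness this is (1).

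I do not expect a serious obstacle in any individual step, since each is an algebraic manipulation of the single equation from Corollary \ref{tokuchou} together with the type formula from Proposition \ref{type}. The one point requiring care is the applicability of Proposition \ref{type}: it demands that $\kk[R_1]$ be a domain, which must be justified from the standing assumption that $R$ is a domain (a subring of a domain is a domain), and it demands non-levelness, which is genuinely present only in the $(1)\Rightarrow$ and $(3)\Rightarrow$ directions — in the level subcase of $(2)\Rightarrow(3)$ one instead uses the direct identity $r(R)=h_s$ for level rings. Keeping straight which formula for $r(R)$ is legitimate in each subcase is the main bookkeeping hazard, but it is not a mathematical difficulty.
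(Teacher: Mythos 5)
Your proposal is correct and follows essentially the same route as the paper's proof: the cyclic chain $(1)\Rightarrow(2)\Rightarrow(3)\Rightarrow(1)$, driven by Corollary \ref{tokuchou}(c) (which for $s=2$ reads $r(R)-1=2(h_2-1)$) together with Proposition \ref{type} ($r(R)=h_1+h_2$ in the non-level case), with the level case handled via $r(R)=h_s$. The only soft spot is your justification of $a>0$ in $(1)\Rightarrow(2)$ — that $h(R)=(1,0,1)$ forces the generators of $\omega_R$ into a single degree is not immediate; the clean argument is that $a=0$ and Proposition \ref{type} would give $r(R)=1$, hence Gorenstein, hence level, a contradiction — but this matches the terseness of the paper's own ``$h_1>0$ since $R$ is non-level.''
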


\begin{Remark}
Even if $R$ satisfies the condition $h(R)=(1,a,a+1)$ for some $a>0$,
$R$ is not necessarily non-level and almost Gorenstein.
Indeed, consider semi-standard graded ring $R=\kk[s,st,st^2,st^6,s^2t^5]$
with $\deg s=\deg st=\deg st^2=\deg st^6=1$ and $\deg s^2t^5=2$.
Then $h(R)=(1,2,3)$ but $R$ is level and non-almost Gorenstein.
\end{Remark}

\section{Almost Gorenstein property versus nearly Gorenstein property}
Lastly, we discuss the relation between almost Gorenstein property and nearly Gorenstein property.
The following theorem follows from \cite[Theorem 4.4]{miyashita2022levelness} and \cite[Theorem 4.7]{higashitani2016almost}.
Note that \cite[Theorem 4.4]{miyashita2022levelness} is standard graded version of Corollary \ref{NGhvector}.
\begin{Thm}\label{AGandNG!}
Let $R$ be a standard graded Cohen-Macaulay affine semigroup ring with $s(R) \geqq 2$. The following conditions are equivalent:
\begin{itemize}
\item[(1)] $R$ is almost Gorenstein and nearly Gorenstein;
\item[(2)] $R$ is Gorenstein.
\end{itemize}
\end{Thm}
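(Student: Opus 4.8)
The plan is to prove Theorem~\ref{AGandNG!} by combining the two cited results in the natural direction. Since Gorenstein implies both almost Gorenstein and nearly Gorenstein trivially, the content is showing $(1) \Rightarrow (2)$, so I would assume $R$ is both almost Gorenstein and nearly Gorenstein with $s(R) \geqq 2$ and derive that $R$ must be Gorenstein.

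First I would invoke \cite[Theorem 4.4]{miyashita2022levelness}, which (as the remark preceding the theorem points out) is the standard graded version of Corollary~\ref{NGhvector}: for a standard graded Cohen--Macaulay affine semigroup ring that is non-Gorenstein and nearly Gorenstein, the last entry of the $h$-vector satisfies $h_{s(R)} \geqq 2$. Thus, arguing by contradiction, suppose $R$ is \emph{not} Gorenstein. Being nearly Gorenstein, it then forces $h_{s(R)} \geqq 2$.

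Next I would bring in the almost Gorenstein hypothesis together with \cite[Theorem 4.7]{higashitani2016almost}. In Higashitani's analysis of almost Gorenstein standard graded domains, the $h$-vector of a non-Gorenstein almost Gorenstein ring is tightly constrained; the relevant consequence I would extract is that almost Gorenstein (with $s(R) \geqq 2$) forces $h_{s(R)} = 1$ in the non-Gorenstein case. Combining $h_{s(R)} = 1$ from the almost Gorenstein side with $h_{s(R)} \geqq 2$ from the nearly Gorenstein side yields a contradiction, so $R$ must be Gorenstein after all. This establishes $(1) \Rightarrow (2)$, and the converse is immediate.

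The main obstacle is not the logical assembly but verifying that the cited theorem of Higashitani genuinely delivers $h_{s(R)} = 1$ under exactly these hypotheses. I would need to confirm that \cite[Theorem 4.7]{higashitani2016almost} applies to the standard graded affine semigroup ring setting here (in particular that the domain hypothesis, or generic Gorensteinness supplied by Lemma~\ref{AGisGG}, is met, and that the $a$-invariant conventions align so that $h_s = \dim_\kk(\omega_R)_{-a(R)}$ as used in Corollary~\ref{NGhvector}). Once the two numerical constraints on $h_{s(R)}$ are pinned down with matching conventions, the proof is a one-line contradiction; the delicate part is ensuring both invoked results speak about the same terminal $h$-vector entry under the same grading.
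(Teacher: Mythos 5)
Your proposal is correct and is essentially the paper's own proof: the paper establishes Theorem~\ref{AGandNG!} precisely by citing \cite[Theorem 4.4]{miyashita2022levelness} (the standard graded version of Corollary~\ref{NGhvector}, giving $h_{s(R)} \geqq 2$ for non-Gorenstein nearly Gorenstein rings) together with \cite[Theorem 4.7]{higashitani2016almost} (giving $h_{s(R)} = 1$ for non-Gorenstein almost Gorenstein rings in this class), and concludes by the resulting contradiction, exactly as you assemble it. Your closing caution about matching hypotheses and grading conventions is the right thing to verify, but it is the only content of the paper's proof as well, so there is no divergence.
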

Moreover,
it is known that every $1$-dimensional almost Gorenstein ring is nearly Gorenstein(see \cite[Proposition 6.1]{herzog2019trace}).
Therefore, we consider the comparison of nearly Gorenstein and almost Gorenstein properties in semi-standard graded affine semigroup rings when the socle degree and dimension are small.
In this Section, we show the following.
\begin{Thm}\label{AGisNGw}
Let $R$ be a non-standard semi-standard graded Cohen-Macaulay affine semigroup ring with $\dim R=s(R)=2$.
If $R$ is almost Gorenstein, then it is nearly Gorenstein.
\end{Thm}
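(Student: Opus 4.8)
The plan is to leverage the heavy machinery already assembled in this paper, since the hypotheses $\dim R = s(R) = 2$ with $R$ non-standard semi-standard graded are extremely restrictive. First I would invoke Theorem~\ref{AGandLevel}: since $R$ is almost Gorenstein and not Gorenstein (the Gorenstein case is trivially nearly Gorenstein), if $R$ were \emph{also} level, then condition (1) of that theorem would force $s(R)=1$, contradicting $s(R)=2$. Hence $R$ must be \emph{non-level}. So the real content is the case where $R$ is non-level, almost Gorenstein, with $\dim R = s(R) = 2$.

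Next I would feed this into the structural classification. Since $R=\kk[S]$ is a Cohen--Macaulay semi-standard graded affine semigroup ring with $\dim R = s(R) = 2$ that is non-level and almost Gorenstein, Theorem~\ref{nonlevelAGwithSocleDeg2} applies (the $(1)\Rightarrow(2)$ direction), giving an explicit description of $S$ as
$$S \cong \langle \{(2i,2n-2i): 0 \leqq i \leqq n \} \cup \{ (2j+2k-1,4n-2j-2k+1): 0 \leqq j \leqq n-1 \} \rangle$$
for some $n \geqq 2$ and $1 \leqq k \leqq n+1$. But this is precisely the family appearing in Proposition~\ref{NGfamily}, where it is proven directly (via Proposition~\ref{NGsemigroup}, the combinatorial criterion for nearly Gorensteinness) that every member of this family \emph{is} nearly Gorenstein. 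Therefore I would simply conclude: $R$ falls into the classified family, and that family is nearly Gorenstein by Proposition~\ref{NGfamily}.

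The main obstacle is a \emph{logical dependency} one, not a computational one: Theorem~\ref{nonlevelAGwithSocleDeg2} is stated later in the paper (Section~6) and its proof relies substantially on \cite{higashitani2018non}, so I must be careful that the present theorem is not used circularly in establishing that classification. The clean way to organize this is to prove the classification theorem first (or independently), and then derive the present statement as an immediate corollary of the fact that the classified family coincides with the Proposition~\ref{NGfamily} family. An alternative route that avoids citing the full classification would be to argue directly: reduce to the non-level case via Theorem~\ref{AGandLevel}, then use the $h$-vector constraint $h(R)=(1,n-1,n)$ forced by the almost Gorenstein and non-level conditions together with Proposition~\ref{type} (which gives $r(R)=h_1+h_2 = 2n-1$), and verify the nearly Gorenstein criterion of Corollary~\ref{NGsemigroup} on the resulting semigroup generators.

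In either formulation the crux is identifying that the $\dim R = s(R) = 2$, non-standard, non-level, almost Gorenstein hypotheses pin $S$ down to exactly the Proposition~\ref{NGfamily} family; once that identification is made, nearly Gorensteinness is already established. I would present the short corollary-style argument (non-Gorenstein $\Rightarrow$ non-level via Theorem~\ref{AGandLevel}; non-level AG $\Rightarrow$ the classified family via Theorem~\ref{nonlevelAGwithSocleDeg2}; the family is nearly Gorenstein via Proposition~\ref{NGfamily}) as the cleanest proof, flagging the ordering of results as the only delicate point.
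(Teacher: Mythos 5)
Your proposal is correct and takes essentially the same route as the paper, whose entire proof reads: $R$ is not level by Theorem~\ref{AGandLevel}, hence nearly Gorenstein by Theorem~\ref{nonlevelAGwithSocleDeg2} (whose ``moreover'' clause rests on Proposition~\ref{NGfamily}). Your explicit disposal of the trivial Gorenstein case and your attention to the non-circular ordering of Theorem~\ref{nonlevelAGwithSocleDeg2} are sound clarifications of points the paper leaves implicit.
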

To show this statement, we prove the following.
\begin{Thm}\label{nonlevelAGwithSocleDeg2}
Let $R=\kk[S]$ be a Cohen--Macaulay semi-standard graded affine semigroup ring with $\dim R=s(R)=2$. Then the following conditions are equivalent:
\begin{itemize}
\item[(1)] $R$ is non-level and almost Gorenstein;
\item[(2)] $S \cong \langle \{(2i,2n-2i): 0 \leqq i \leqq n \} \bigcup \{ (2j+2k-1,4n-2j-2k+1): 0 \leqq j \leqq n-1 \} \rangle$ for some $n \geqq 2$ and $1 \leqq k \leqq n+1$.
\end{itemize}
Moreover, if this is the case, then $R$ is always nearly Gorenstein and $h(R)=(1,n-1,n)$.
\begin{proof}
By using Lemma 3.3,
it is enough to show that (1) implies (2).
From the proof of \cite[Theorem 3.5]{higashitani2018non}, we get
$R \cong R_1 \oplus C$ as $R_1$-module.
Moreover, $B=R_1\cong \bigoplus_{i \in \NN} T_{ni}$ and $C(2) \cong \bigoplus_{i \in \NN} T_{n-1 + ni}$ for some $n \geqq 2$
where $T=\kk[x,y]$ and $n=h_1+1=h_2$.
Thus there exist
$V=\{ (id,(n-i)d): 0 \leqq i \leqq n \}$
such that $B=\kk[\langle V \rangle]$.
Note that $C$ has a minimal genearating system consisting of $n$ elements as a $B$-module, and all of its generators have degree 2.
Furthermore, since $R$ is finitely generated $B$-module,
the subset $W \subseteq S$ corresponding to the minimal generating system of $C$ satisfies $W \subseteq \RR_{\geqq 0}\langle V \rangle$
by Proposition \ref{extremal}.
Therefore, there exists
$W=\{(a+ik,2nd-a-ik) : 0 \leqq i \leqq n-1 \}$ such that
$C=B \langle{x_1}^{u_1}{x_2}^{u_2} : (u_1,u_2) \in W \rangle$
and $\kk[S] \cong \kk[\langle V \cup W\rangle]$
where $k>0$ and $0 < a < (n+1)d$ with $a \not\equiv 0\; (\textrm{mod}\; d$).
First we show
$d=k$.
Note that
$$
C_3 \supseteq \{ x_1^{a+d}x_2^{(3n-1)d-a}\} \cup \{x_1^{a+(i-1)k}x_2^{3nd-a-(i-1)k},\;
x_1^{a+(n-1)k+id}x_2^{(3n-i)d-a-(n-1)k} : 1 \leqq i \leqq n\}.
$$
If $d>k$, then
$2n=\dim_\kk T_{2n-1}=\dim_\kk C_3 \geqq 2n+1$, this yields a contradiction.
If $d \geqq k$, then we can check $x_1^{a+d}x_2^{(3n-1)d-a} \in \bigoplus_{2 \leqq i \leqq n}x_1^{a+(i-1)k}x_2^{3nd-a-(i-1)k}$.
Thus $d=jk$ for some $1 \leqq j \leqq n-2$.
Moreover,
since $$\left(x_1^{a+(n-2)k}x_2^{2nd-a-(n-2)d} \right)x_1^{nd} \in \bigoplus_{1 \leqq i \leqq n-1} x_1^{a+(n-1)k+id}x_2^{(3n-i)d-a-(n-1)k},$$
we get $k=(n-i)d$
for some $1 \leqq i \leqq n-1$.
Thus 
$d=k$.
Then we have
$$B_4 = \bigoplus_{0 \leqq i \leqq 4n} \kk x_1^{id}x_2^{(4n-i)d}, \;C_4 = \kk x_1^{a+(i-1)d}x_2^{(4n-i+1)d-a}\;\;\textit{and}\;\;R_4 = B_4 \oplus C_4.$$
For $x_1^{a}x_2^{2nd-a} \in R_2$,
we can check
$(x_1^{a}x_2^{2nd-a})^2 \in B_4$.
Note that $0 < a < (n+1)d$ and $a \not\equiv 0\; (\textrm{mod}\; d$).
Then there exists $1 \leqq l \leqq 2n+1$ such that $2a=ld$. Moreover, we can write $d=2m$ and $l=2k-1$ for some $m>0$ and $1 \leqq k \leqq n+1$.
On the other hand,
$\kk[S] \cong \kk[\langle V \cup W\rangle]$
implies $S \cong \langle V \cup W\rangle$ (see \cite[Theorem 2.1 (b)]{gubeladze1998isomorphism}).
Thus,
\begin{align}
\begin{split}\nonumber
S &\cong \langle \{(id,nd-id): 0 \leqq i \leqq n \} \cup \{ (a+jd,2nd-jd-a): 0 \leqq j \leqq n-1 \} \rangle\\
&\cong \langle \{(2i,2n-2i)): 0 \leqq i \leqq n \} \cup \{ (2j+l,4n-2j-2k+1): 0 \leqq j \leqq n-1 \} \rangle
\end{split}
\end{align}
for some $n \geqq 2$ and $1 \leqq k \leqq n+1$.
\end{proof}
\end{Thm}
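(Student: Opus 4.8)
The plan is to prove the equivalence by handling the two implications separately, with essentially all of the substantive work concentrated in $(1)\Rightarrow(2)$; the reverse implication and the ``moreover'' clause will follow almost immediately from Proposition \ref{NGfamily}. For $(2)\Rightarrow(1)$, observe that $R=\kk[S]$ is a domain, so when $S$ is the displayed semigroup, Proposition \ref{NGfamily} already records that $R$ is nearly Gorenstein with $h(R)=(1,n-1,n)$ and $r(R)=2n-1$. Since $n\ge 2$, the type $r(R)=2n-1$ differs from $h_{s(R)}=n$, so $R$ is non-level; and because $h(R)=(1,n-1,n)$ has the shape $(1,a,a+1)$ with $a=n-1>0$, the earlier characterization of non-level almost Gorenstein semi-standard graded domains with $s(R)=2$ gives that $R$ is almost Gorenstein. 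Thus $(1)$ holds, and the nearly Gorenstein property together with the $h$-vector are exactly what Proposition \ref{NGfamily} supplies, settling the ``moreover'' clause at the same time.

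For $(1)\Rightarrow(2)$ I would import the structural analysis underlying \cite[Theorem 3.5]{higashitani2018non}. First, non-levelness forces $h(R)=(1,n-1,n)$: by Proposition \ref{type} we have $r(R)=h_1+h_2$, while the almost Gorenstein type formula (Corollary \ref{tokuchou}(c)) gives $r(R)=2h_2-1$ for $s(R)=2$, so $h_1=h_2-1$, and writing $n=h_2$ yields $(1,n-1,n)$ with $n\ge 2$. Next, decompose $R=B\oplus C$ as a module over $B=\kk[R_1]$; with $\dim R=2$ and this $h$-vector, $B$ is forced to be the $n$-th Veronese-type subring $B\cong\bigoplus_i T_{ni}$ of a polynomial ring $T=\kk[x,y]$, while $C(2)\cong\bigoplus_i T_{n-1+ni}$ has all $n$ of its generators in degree $2$. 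Translating to the semigroup level, $B=\kk[\langle V\rangle]$ for $V=\{(id,(n-i)d):0\le i\le n\}$, and the generators of $C$ correspond to a set $W=\{(a+ik,2nd-a-ik):0\le i\le n-1\}$ contained in $\RR_{\ge 0}\langle V\rangle$, with unknown parameters $d,k>0$ and $0<a<(n+1)d$, $a\not\equiv 0\pmod{d}$.

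The main obstacle, and the bulk of the argument, is pinning down $d,k,a$. I would compute the graded piece $C_3$ in two ways: from $C(2)\cong\bigoplus_i T_{n-1+ni}$ its dimension is fixed at $\dim_\kk C_3=2n$, while enumerating the monomials of $B_1\cdot W$ gives an a priori larger list. Forcing the two counts to agree produces collisions among the enumerated monomials, and tracking these collisions shows first that $d\le k$, then that $d=jk$ for some $1\le j\le n-2$, and a second collision relation gives $k=(n-i)d$, whence $d=k$. Once $d=k$, I would pass to degree $4$, square the degree-$2$ generator $x_1^a x_2^{2nd-a}$, and use that its square must lie in $B_4$ to obtain $2a=ld$ with $l$ odd; writing $l=2k-1$ and $d=2m$ and rescaling then puts $V$ and $W$ into precisely the form displayed in $(2)$. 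Finally, $\kk[S]\cong\kk[\langle V\cup W\rangle]$ upgrades to a semigroup isomorphism $S\cong\langle V\cup W\rangle$ via \cite[Theorem 2.1(b)]{gubeladze1998isomorphism}. The delicate point throughout is the bookkeeping of exactly which monomials in $B_1\cdot W$ (and later in degree $4$) collide, since this is what converts the dimension counts into the arithmetic constraints on $d$, $k$, and $a$.
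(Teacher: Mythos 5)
Your proposal is correct and follows essentially the same route as the paper: importing the decomposition $R\cong\kk[R_1]\oplus C$ with $B\cong\bigoplus_i T_{ni}$, $C(2)\cong\bigoplus_i T_{n-1+ni}$ from the proof of \cite[Theorem 3.5]{higashitani2018non}, forcing $d=k$ by collision counting in $C_3$, extracting $2a=ld$ with $l$ odd from $(x_1^ax_2^{2nd-a})^2\in B_4$, and invoking \cite[Theorem 2.1(b)]{gubeladze1998isomorphism} for the semigroup isomorphism. Your treatment of $(2)\Rightarrow(1)$ via Proposition \ref{NGfamily} together with the $h$-vector characterization of Section 5 is exactly what the paper intends (its citation ``Lemma 3.3'' there appears to be a mis-reference), just spelled out more explicitly.
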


\begin{proof}[Proof of Theorem \ref{AGisNGw}]
Since $R$ is not level by Theorem \ref{AGandLevel}, it is nearly Gorenstein by Theorem \ref{nonlevelAGwithSocleDeg2}.
\end{proof}

\begin{Examples}
For semi-standard graded affine semigroup rings where either the socle degree or the dimension is greater than 2, almost Gorenstein property does not imply nearly Gorenstein property in general. We can check the following is true in the same way as Examples \ref{exNG1}.
\begin{itemize}
\item[(1)] $R=\QQ[t,st,s^5t,s^4t^2]$ is non-nearly Gorenstein almost Gorenstein semi-standard graded affine semigroup ring with $\dim R=2$ and $s(R)=3$ where $\deg s^at^b=b$ for any $a,b \in \NN$.
From $h_3=1$, we can also confirm that $R$ is not nearly Gorenstein by Corollary \ref{NGhvector}.
\item[(2)]
$R=\QQ[u,s^2u,t^2u,t^4u,tu^2,t^3u^2]$
is non-nearly Gorenstein almost Gorenstein semi-standard graded affine semigroup ring with $\dim R=3$ and $h(R)=(1,1,2)$ where $\deg s^at^bu^c=c$ for any $a,b,c \in \NN$.
\item[(3)] $R=
\QQ[P_{1,1}]
\cong
\QQ[wyz^2,wy^2z,wx,wxyz,wxy^2z^2,w^2xy^2z^2,w^2xy^3z^3]$ $($see \cite[Theorem 4.5]{higashitani2018non}$)$ is non-nearly Gorenstein almost Gorenstein Ehrhart ring with $\dim R=4$ and $h(R)=(1,1,2)$.

\end{itemize}
\end{Examples}

\end{document}